\numberwithin{equation}{section}
\numberwithin{figure}{section}
\theoremstyle{plain}
\newtheorem{thm}{Theorem}[section]
\newtheorem{prop}[thm]{Proposition}
\newtheorem{definition}[thm]{Definition}
\newtheorem{lem}[thm]{Lemma}
\newtheorem{cor}[thm]{Corollary}
\newtheorem{rem}[thm]{Remark}
  \newcounter{casectr}
\theoremstyle{definition}
\theoremstyle{remark}
\newcommand{\RRR}{\mathbb{R}}
\newcommand{\AAA}{\mathcal{A}}
\newcommand{\EEE}{\mathcal{E}}
\newcommand{\limn}{\lim_{n\rightarrow \infty}}
\newcommand{\limsupn}{\limsup_{n\rightarrow\infty}}
\newcommand{\xtn}{x(t_{n_{0}})}
\newcommand{\xitn}{\xi({t_{n_{0}}})}
\newcommand{\ltn}{\lambda(t_{n_{0}})}
\providecommand{\casename}{Case}
\title{The $L^{2}$ weak sequential convergence of radial mass critical NLS solutions with mass above the ground state.  }
\begin{document}
\begin{center}
\author{Chenjie Fan $^{\ddagger} $ $^\dagger$}
\end{center}
\thanks{\quad \\$^{\ddagger} $Department of Mathematics,  Massachusetts Institute of Technology,  77 Massachusetts Ave,  Cambridge,  MA 02139-4307 USA. email:
cjfan@math.mit.edu.\\
$^\dagger$ The author is partially supported by NSF Grant DMS 1069225, DMS 1362509 and DMS 1462401.}
\maketitle
\begin{abstract}
We study the non-scattering $L^{2}$ solution  $u$ to the  radial mass critical nonlinear Schr\"odinger equation with mass just above the ground state, and show that there exists a time sequence $\{t_{n}\}_{n}$, such that  $u(t_{n})$ weakly converges to the ground state $Q$ up to  scaling and  phase transformation. We also give some partial results on the mass concentration of the minimal mass blow up solution.
\end{abstract}
\section{Introduction}
In this work, we study the Cauchy Problem to the focusing mass critical NLS at  regularity  $L^{2}$
\begin{equation}\label{nls}
\begin{cases}
iu_{t}+\Delta u= - |u|^{4/d}u, \\
u(0,x)=u_{0}\in L^{2}(\RRR^{d}).\\
\end{cases}
\end{equation}

Here $d$ denotes the dimension. We assume $d=1,2,3$ to reduce technicalities, but the results should hold for general dimension as well.


Equation \eqref{nls} has three  conservation laws:
\begin{itemize}
\item Mass:
\begin{equation}
M(u(t,x)):=\int |u(t,x)|^{2}dx=M(u_{0}),
\end{equation}
\item Energy:
\begin{equation}
E(u(t,x)):=\frac{1}{2}\int |\triangledown u(t,x)|^{2}dx-\frac{1}{2+\frac{4}{d}}\int|u(t,x)|^{2+\frac{4}{d}}dx=E(u_{0}),
\end{equation}
\item Momentum:
\begin{equation}
P(u(t,x)):=\Im(\int \triangledown u(t,x) \overline{u(t,x)})dx=P(u_{0}),
\end{equation}
\end{itemize}
and  the following symmetries:
\begin{enumerate}
\item Space-time translation: If $u(t,x)$ solves \eqref{nls}, then $\forall t_{0}\in \RRR, x_{0}\in \RRR^{d}$, we have  $u(t-t_{0}, x-x_{0})$ solves \eqref{nls}.
\item Phase transformation: If $u$ solves \eqref{nls}, then $\forall \theta_{0}\in \RRR$, we have $e^{i\theta_{0}}u$ solves \eqref{nls}. 
\item Galilean transformation: If $u(t,x)$ solves \eqref{nls}, then $\forall \xi\in \RRR^{d}$, we have  $u(t,x-\xi t)e^{i\frac{\xi}{2}(x-\frac{\xi}{2}t)}$ solves \eqref{nls}.
\item Scaling: If $u(t,x)$ solves \eqref{nls},  then $\forall \lambda \in \RRR_{+}$,  we have $u_{\lambda}(t,x):=\frac{1}{\lambda^{\frac{d}{2}}}u(\frac{t}{\lambda}, \frac{x}{\lambda})$ solves \eqref{nls}.
\item Pseudo-conformal transformation: If $u(t,x)$ solves \eqref{nls}, then $\frac{1}{t^{\frac{d}{2}}}\bar{u}(\frac{1}{t},\frac{x}{t})e^{i\frac{|x|^{2}}{4t}}$ solves \eqref{nls}.
\end{enumerate}

Throughout the whole article, we assume 

\begin{equation}\label{supercritical}
\|Q\|_{2}\leq\|u_{0}\|_{2}\leq \|Q\|_{2}+\alpha.
\end{equation}

\bigskip
Here $\alpha$ is some universal small constant, which will be determined later and 
$Q$ is the unique $L^{2}$, positive solution to the elliptic PDE
\begin{equation}\label{groundstate}
-\Delta Q+Q=|Q|^{4/d}Q.
\end{equation}
We remark $Qe^{it}$ solves \eqref{nls} if and only if $Q$ solves \eqref{groundstate}. $Q$ is smooth and decays exponentially.

\subsection{Main Results}
In this work we show the following:
\begin{thm}\label{main1}
Assume $u$ is a radial  solution to \eqref{nls}, with $u_0$ satisfying \eqref{supercritical}, which does not scatter forward.  Let $(T^{-}(u),
 T^{+}(u))$ be its lifespan,  then there exist a time sequence $t_{n}\rightarrow T^{+}(u)$, and a family of parameters $\lambda_{*,n},  , \gamma_{*,n}$ such that
\begin{equation}
\lambda_{*,n}^{d/2}u(t_{n}, \lambda_{*,n}x)e^{-i\gamma_{*,n}}\rightharpoonup Q \text{ in } L^{2}.
\end{equation}
\end{thm}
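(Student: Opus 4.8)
The plan is to build the sequence $t_n$ out of a profile decomposition applied along a time sequence approaching $T^+(u)$, and then use the variational characterization of $Q$ to extract the ground state as the only surviving profile. First I would set up a sequence of times $t_n \to T^+(u)$ along which the solution fails to scatter; the natural choice is to use the standard concentration-compactness/rigidity dichotomy (Kenig–Merle, Tao–Visan–Zhang, Dodson) to see that non-scattering forward forces $\|u(t_n)\|_{\dot H^s}$ (or a suitable Strichartz norm on $[t_n, T^+)$) to blow up, or at least to stay bounded below, which is exactly the compactness obstruction we want to exploit. Then I would apply the $L^2$ linear profile decomposition to the bounded sequence $u(t_n) \in L^2(\mathbb R^d)$: write $u(t_n) = \sum_{j=1}^{J} e^{i t_n^j \Delta}\, g_n^j \phi^j + w_n^J$ where $g_n^j$ are the symmetry operators (scaling, translation, Galilean, phase) and the remainder $w_n^J$ has asymptotically small Strichartz norm as $J \to \infty$, $n \to \infty$. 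Radiality of $u$ kills the translation and Galilean parameters (up to passing to the radial profile decomposition), so each $g_n^j$ is just scaling $\times$ phase $\times$ a time translation under the linear flow; the parameters $\lambda_{*,n}, \gamma_{*,n}$ in the theorem will come from the single relevant profile.

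The second step is the energy/mass bookkeeping. By the almost-orthogonality of the decomposition in $L^2$ and in the nonlinear functional $E$ (Pythagorean expansion of mass, and the corresponding expansion of energy using the fact that the linear evolutions $e^{it_n^j\Delta}\phi^j$ disperse and hence have vanishing $L^{2+4/d}$ norm unless $t_n^j$ stays bounded), the constraint $\|u_0\|_2 \le \|Q\|_2 + \alpha$ with $\alpha$ small forces: at most one profile can carry mass close to $\|Q\|_2$, all other profiles have mass at most $O(\alpha)$, and in fact — by the sharp Gagliardo–Nirenberg inequality, i.e. the variational characterization of $Q$ as the minimizer with $\|\cdot\|_2 = \|Q\|_2$ — any profile with mass $\le \|Q\|_2$ and which does not disperse (bounded $t_n^j$) must have nonnegative "localized energy," and the energy of the nonlinear solution is fixed. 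Combining the mass cap with the non-scattering hypothesis (a purely dispersive remainder would scatter, and every profile with $|t_n^j| \to \infty$ also scatters by the small-data theory once $\alpha$ is small enough that each such profile has mass $\le \|Q\|_2 + \alpha$ below the scattering threshold in the appropriate sense), I get that exactly one profile $\phi^1$ survives with $t_n^1 \equiv 0$ (up to a subsequence and absorbing the bounded time translation into the data), $\|w_n^J\|$-type remainder $\to 0$, and all other profiles absent. Hence $\lambda_{*,n}^{d/2} u(t_n, \lambda_{*,n} x) e^{-i\gamma_{*,n}} \rightharpoonup \phi^1$ in $L^2$ for a single profile $\phi^1$ with $\|\phi^1\|_2 \le \|Q\|_2$.

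The final step is to identify $\phi^1 = Q$. Here I would use that $\phi^1$ is the weak limit of rescalings of a non-scattering solution together with the energy trapping: the weak limit inherits $\|\phi^1\|_2 \le \|Q\|_2$ from lower semicontinuity, while non-scattering (combined with the refined/localized virial or the fact that a strictly subthreshold profile would itself scatter, contradicting that it carries the non-scattering obstruction) forces $\|\phi^1\|_2 \ge \|Q\|_2$, hence $\|\phi^1\|_2 = \|Q\|_2$. A profile sitting exactly at the mass threshold, arising as a weak limit with the energy constraint $E \le$ (something forced to $0$ in the limit by the small-$\alpha$ bound plus scaling freedom in $\lambda_{*,n}$), must be a minimizer of the Gagliardo–Nirenberg functional, and by the variational characterization these are exactly $e^{i\gamma}\beta^{d/2}Q(\beta \cdot)$; absorbing $\gamma, \beta$ into $\gamma_{*,n}, \lambda_{*,n}$ gives $\phi^1 = Q$. \emph{The main obstacle} I anticipate is the energy bookkeeping in Step 2: the energy $E$ is not sign-definite, and controlling the energy of each profile requires carefully handling the profiles with $|t_n^j| \to \infty$ (whose linear energy is positive in the limit because the $L^{2+4/d}$ term vanishes, but whose nonlinear solutions still must be shown to scatter and thus decouple) and showing that the leftover energy budget, once $\alpha$ is taken small, pins the surviving profile to the minimizing manifold rather than merely to mass $\|Q\|_2$ with indefinite energy. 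This is where the smallness of $\alpha$ and the sharpness of the Gagliardo–Nirenberg constant must be used quantitatively, and where one likely needs the variational estimate that data with mass slightly above $\|Q\|_2$ and bounded energy is $L^2$-close, modulo symmetries, to the $Q$-orbit.
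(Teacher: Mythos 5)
Your overall scaffolding (profile decomposition along $t_n\to T^+$, Dodson's subthreshold scattering theorem to force one profile of mass $\geq\|Q\|_2$, mass orthogonality plus \eqref{supercritical} to force uniqueness of that profile, compactness of its time parameter) matches the paper's Step 1, and the weak convergence to the distinguished profile $\phi^1$ is indeed immediate from \eqref{orthpara} and \eqref{weakconve}. But the identification $\phi^1=Q$ in your Step 3 has two genuine gaps. First, the inequality $\|\phi^1\|_2\leq\|Q\|_2$ does \emph{not} follow from lower semicontinuity: weak convergence only gives $\|\phi^1\|_2\leq\|u_0\|_2\leq\|Q\|_2+\alpha$, and ruling out the window $\|Q\|_2<\|\phi^1\|_2\leq\|Q\|_2+\alpha$ is a major piece of the argument (Lemma \ref{quasuper}), which in the paper requires the Merle--Rapha\"el log-log blow-up theory (Theorem \ref{mr1}, Corollary \ref{mr1cor}): a nonpositive-energy $H^1$ limit with mass strictly above $\|Q\|_2$ blows up by ejecting mass away from the concentration point, contradicting almost periodicity. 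Second, and more fundamentally, the "energy bookkeeping" you flag as the main obstacle cannot even be set up at $L^2$ regularity: $E$ is undefined on $L^2$, and $\phi^1$ has no a priori $H^1$ bound, so the sharp Gagliardo--Nirenberg inequality and Lemma \ref{mrvar} are not applicable to $\phi^1$ as it stands. The paper's way around this is the content of Lemma \ref{seeyou}: one must first upgrade the profile to an almost periodic solution $w$ (via the Duyckaerts--Kenig--Merle minimization over time sequences, Lemma \ref{firstextraction}, and the Tao--Visan--Zhang reduction, Lemma \ref{secondextraction}), then run Dodson's long-time Strichartz estimate and a frequency- and space-truncated virial/Morawetz argument to produce $H^1$ functions $\chi(x/R_n)P_{\leq CK_n}w(t_n)$ carrying almost all the mass and having energy $\leq\frac1n\|\nabla(\cdot)\|_2^2$; only then does the variational lemma apply. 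Your proposal contains no substitute for this step.

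A secondary structural point: the time sequence in the conclusion cannot be the one you start with. The profile $\phi^1$ extracted at the first decomposition is some $L^2$ function with mass $\geq\|Q\|_2$; there is no mechanism forcing it to equal $Q$ at that stage. The paper instead evolves the associated nonlinear profile, finds \emph{new} times at which the reduced solution is strongly close to $Q$ (Lemma \ref{quathresh}), and transfers this back to $u$ via a double profile decomposition (Lemma \ref{double}), producing a modified sequence of the form $t_n+\lambda_{1,n}^2 h_p$. Also note that the radial hypothesis is not used merely to kill translation and Galilean parameters in the linear decomposition; it is needed to ensure $x(t)\equiv\xi(t)\equiv 0$ for the almost periodic solution $w$, without which the spatial cutoff in Lemma \ref{seeyou} is centered at an uncontrolled point $x_n$ and the argument collapses (see the remark following the proof of Lemma \ref{seeyou}).
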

See Definition \ref{defnscatter} for the precise notion of scattering forward.

\bigskip
If one further assume $\|u\|_{2}=\|Q\|_{2}$, then we can upgrade the above to
\begin{thm}\label{main2}
Assume $u$ is a radial solution to \eqref{nls}, with  $\|u\|_{2}=\|Q\|_{2}$, which does not scatter forward.  Let $(T^{-}(u),
 T^{+}(u))$ be its lifespan, 
 then there exists a sequence $t_{n}\rightarrow T^{+}$, and a family of parameters $\lambda_{*,n}, \gamma_{*,n}$ such that
 \begin{equation}
\lambda_{*,n}^{d/2}u(t_n,\lambda_{*,n}x)e^{-i\gamma_{*,n}}\rightarrow Q \text{ in } L^{2}.
\end{equation}
\end{thm}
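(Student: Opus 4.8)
The plan is to deduce Theorem~\ref{main2} directly from Theorem~\ref{main1} together with an elementary Hilbert-space fact; the mass equality hypothesis is exactly what turns the weak convergence into strong convergence. Since $\|u_0\|_2=\|Q\|_2$ sits at the boundary of the range \eqref{supercritical}, the hypotheses of Theorem~\ref{main1} are met, so we already obtain a sequence $t_n\to T^+(u)$ and parameters $\lambda_{*,n},\gamma_{*,n}$ such that
\[
v_n:=\lambda_{*,n}^{d/2}u(t_n,\lambda_{*,n}x)e^{-i\gamma_{*,n}}\rightharpoonup Q \quad\text{in } L^2(\RRR^d).
\]
What remains is to upgrade $\rightharpoonup$ to $\rightarrow$, and this is where the minimal-mass assumption is used.

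First I would record that the rescaling $f\mapsto \lambda^{d/2}f(\lambda\cdot)$ and the phase multiplication $f\mapsto e^{-i\gamma}f$ are $L^2$-isometries, so that $\|v_n\|_{L^2}=\|u(t_n)\|_{L^2}$ for every $n$; by mass conservation and the hypothesis $\|u\|_2=\|Q\|_2$ this equals $\|Q\|_{L^2}$ for every $n$. Thus $(v_n)$ is a sequence of constant $L^2$-norm converging weakly to a limit of the \emph{same} $L^2$-norm. Second, I invoke the Radon--Riesz property of the Hilbert space $L^2(\RRR^d)$: expanding the square,
\[
\|v_n-Q\|_{L^2}^2=\|v_n\|_{L^2}^2-2\Re\langle v_n,Q\rangle_{L^2}+\|Q\|_{L^2}^2
=2\|Q\|_{L^2}^2-2\Re\langle v_n,Q\rangle_{L^2}\longrightarrow 0,
\]
where we used $\langle v_n,Q\rangle_{L^2}\to\langle Q,Q\rangle_{L^2}=\|Q\|_{L^2}^2$ from the weak convergence. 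Hence $v_n\to Q$ strongly in $L^2$, which is exactly the asserted conclusion.

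The only genuine obstacle is Theorem~\ref{main1} itself; granting it, the passage to the threshold-mass case is soft. Conceptually, the point is that at mass $\|Q\|_2$ there is no ``excess'' $L^2$-mass available to leak out along the profile decomposition or to escape to spatial infinity, so the weak limit must carry the full norm and the convergence is automatically strong. I would also add a short remark explaining that this argument does \emph{not} extend to strictly supercritical mass: in that regime the surplus $\|u_0\|_2^2-\|Q\|_2^2>0$ can in principle survive as a nonvanishing, weakly null remainder, which is precisely why Theorem~\ref{main1} can only be stated with weak convergence.
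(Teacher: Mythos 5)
Your proposal is correct and is essentially identical to the paper's own argument: the paper likewise derives Theorem~\ref{main2} from Theorem~\ref{main1} combined with the classical Hilbert-space fact that weak convergence together with convergence of norms implies strong convergence, using that the rescaling and phase maps are $L^2$-isometries and that mass is conserved. No further comparison is needed.
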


\begin{rem}\label{fr}
 Most of the proof for Theorem \ref{main1} and  Theorem \ref{main2}, written in this work can be obtained for  the nonradial case as well. Indeed, only one step (Lemma \ref{seeyou}) cannot be generalized to the nonradial case. In particular, we do not use Sobolev embedding or weighted Strichartz estimate for radial solutions. Moreover,  our  results hold in fact for solutions which are symmetric across any $d$ linearly independent hyperplanes. Nevertheless, the idea in this work is not enough to  cover the nonradial case. We will investigate this case  in a future work.
\end{rem}

We also obtain some partial results for the minimal mass blow up solution to \eqref{nls} at  regularity $L^{2}$, not necessarily radial.
\begin{thm}\label{aux}
Let $u$ be a general $L^{2}$ solution to \eqref{nls} that  blows up at finite time $T$ and such that  $\|u(t)\|_{2}=\|Q\|_{2}$. Then there exist sequencies $x_{n}, t_{n}$ such that
\begin{equation}\label{conrate}
\int_{|x-x_{n}|\leq (T-t_{n})^{2/3-}}|u|^{2}\geq \|Q\|_{2}.
\end{equation}
\end{thm}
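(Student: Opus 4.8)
The plan is to combine an $L^2$ compactness statement at the blow-up time with a quantitative control on the blow-up speed; the latter is the only genuinely new point and is responsible for the (presumably non-optimal) exponent $2/3$. First I would run the compactness analysis underlying Theorems \ref{main1} and \ref{main2}, specialized to a solution with $\|u\|_2=\|Q\|_2$ blowing up at a finite time $T$. In this regime the radial reduction of Remark \ref{fr} is not needed: one only uses the $L^2$ profile decomposition together with the variational characterization of $Q$ via the sharp Gagliardo--Nirenberg inequality (this is essentially the Hmidi--Keraani compactness picture), obtaining $t_n\to T$, scales $\lambda_n\to 0$, translations $x_n\in\RRR^d$ and phases $\gamma_n$ with $\lambda_n^{d/2}u(t_n,\lambda_n x+x_n)e^{-i\gamma_n}\to Q$ in $L^2$. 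Since $Q$ is smooth and exponentially localized, for every $\eta>0$ there is $R_\eta$ with $\int_{|x-x_n|\le R_\eta\lambda_n}|u(t_n)|^2\ge\|Q\|_2^2-\eta$ for $n$ large; as $\|Q\|_2<\|Q\|_2^2$, the choice $\eta=\|Q\|_2^2-\|Q\|_2>0$ fixes $R_\eta$ to a universal constant $R_0$.

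Fix $\epsilon>0$. By the previous step it then suffices to show, along a subsequence, that $R_0\lambda_n\le (T-t_n)^{2/3-\epsilon}$, and since $(T-t_n)^{2/3-\epsilon}\gg(T-t_n)^{2/3}$ this reduces to the scale bound $\lambda_n\lesssim (T-t_n)^{2/3}$. Passing to the rescaled solutions $v_n(s,y):=\lambda_n^{d/2}u(t_n+\lambda_n^2 s,\lambda_n y+x_n)e^{-i\gamma_n}$, each solves \eqref{nls}, cannot be continued past the rescaled time $S_n:=(T-t_n)/\lambda_n^2$, and $v_n(0)\to Q$ in $L^2$. Continuity of the $L^2$ flow at $Q$ on compact time intervals --- using $\|Qe^{is}\|_{L^{2+4/d}_{t,x}([0,S])}<\infty$ for each fixed $S$ --- already gives $S_n\to\infty$, i.e. $\lambda_n=o((T-t_n)^{1/2})$; but this is qualitative, whereas the scale bound needs a power gain over $(T-t_n)^{1/2}$.

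Upgrading $S_n\to\infty$ to a power lower bound $S_n\gtrsim(T-t_n)^{-\delta}$ is the main obstacle, and it is where the exponent $2/3$ is born. I would exploit that $v_n$ comes from a blow-up solution: running the $L^2$ modulation decomposition at time $t_n$, write $u(t_n)$ as a modulated $Q$ at scale $\lambda_n$ plus a remainder of $L^2$-size $\delta_n=\|v_n(0)-Q\|_2$, which the blow-up analysis (alternatively, the pseudo-conformal transform of $u$ about $T$) shows to be not merely $o(1)$ but polynomially small, $\delta_n\lesssim (T-t_n)^{\beta}$ for a suitable $\beta>0$; on the other hand a solution starting $\delta_n$-close to $Q$ and later blowing up cannot do so too fast, since the only fast exit from the $Q$-tube is along the unstable mode, whose contribution is controlled by the modulation equations, forcing an estimate of the form $S_n\gtrsim\delta_n^{-c}$. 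Combining $\lambda_n^2S_n=T-t_n$ with these two inputs gives $\lambda_n^2\lesssim\delta_n(T-t_n)\lesssim (T-t_n)^{1+\beta}$, and a careful accounting of the bookkeeping yields $\lambda_n\lesssim (T-t_n)^{2/3-}$; choosing a subsequence of $\{t_n\}$ along which $R_0\lambda_n\le (T-t_n)^{2/3-\epsilon}$ then produces \eqref{conrate} with the $x_n$ above. The hard part throughout is that $u$ is only $L^2$: the standard $H^1$ blow-up-rate tools (virial or Lyapunov functionals built on $\|\nabla u\|_2$, which here need not be finite) are unavailable, and the quantitative $L^2$ substitute --- controlling the remainder $\delta_n$ and the unstable-mode contribution simultaneously --- is exactly what degrades the exponent from the $H^1$ value $1$ down to $2/3$.
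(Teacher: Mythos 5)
There is a genuine gap, and it sits exactly where you admit the argument is hardest. Your plan reduces, correctly, to proving a scale bound $\lambda_n\lesssim (T-t_n)^{2/3-}$ along a sequence, but the two quantitative inputs you invoke to get it --- the polynomial smallness $\delta_n=\|v_n(0)-Q\|_2\lesssim (T-t_n)^{\beta}$ and the lifespan lower bound $S_n\gtrsim \delta_n^{-c}$ coming from ``modulation equations'' and the ``unstable mode'' --- are asserted, not derived, and the machinery they rely on (spectral decomposition of the linearized operator around $Q$, coercivity of the energy on the orthogonal complement, control of the unstable direction) is $H^1$ modulation theory that is simply not available for an $L^2$ solution; indeed nothing in this paper or in the cited literature provides an $L^2$ substitute, and the exponent $2/3$ is left to unexplained ``bookkeeping.'' A second, foundational problem: Theorem \ref{aux} is stated for general, not necessarily radial, $L^2$ solutions, and at this regularity it is \emph{not} known that the compact profile is $Q$ --- that identification is exactly what fails nonradially (see Remark \ref{fr} and the remark after the proof of Lemma \ref{seeyou}, where the nonradial analogue \eqref{fail} is declared useless). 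So the ``$Q$-tube'' on which your whole second half is built is not justified; only almost periodicity modulo symmetries, $u(t)=\lambda(t)^{-d/2}V_t((x-x(t))/\lambda(t))e^{ix\xi(t)}$ with $\{V_t\}$ precompact, is available, and that does suffice for the concentration step but not for modulation around $Q$.

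The paper's route is entirely different and explains where $2/3$ really comes from. One argues by contradiction: if $\lambda(t)\gtrsim (T-t)^{2/3-}$ as $t\to T$, the (inverse) pseudo-conformal transform produces a global almost periodic solution $v(\tau)$ with frequency scale $N(\tau)=\tau^{-1}\lambda(T-1/\tau)^{-1}\lesssim \tau^{-1/3-}$, hence $\int_0^\infty N^3(\tau)\,d\tau<\infty$. This is the ``fast cascade'' scenario, which Dodson's long-time Strichartz estimate upgrades to an $H^1$ solution with zero energy (Lemma \ref{extraregular}), excluded by the variational characterization of $Q$ as in Lemma \ref{nofastcascade}. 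The exponent $2/3$ is precisely the integrability threshold for $N^3$; no modulation analysis, no control of a remainder $\delta_n$, and no identification of the profile as $Q$ is needed. If you want to salvage your approach, you would have to supply genuine $L^2$ analogues of the two quantitative claims above, which is a substantially harder problem than the theorem itself.
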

We will give the proof of Theorem \ref{aux} in Appendix \ref{sectionaux}.
\begin{rem}
If one further assumes that  the initial data is in $H^{1}$ and with same mass as $Q$, then \eqref{conrate}  holds even if  one changes the power $2/3$ to $1$. Indeed, the $H^{1}$ minimal mass blow up solution  is  determined by \cite{merle1993determination} and can be written down explicitely. 
\end{rem}
\begin{rem}
Though not explicitly stated in the literature, it is not hard to combine concentration compactness and Dodson's scattering result in \cite{dodson2015global}, (see also Theorem  \ref{dodson}), to show that if $u$ is a general solution to the \eqref{nls},   which blows up in finite time $T$,  then there exist $t_{n}\rightarrow T$, and $x_{n}\in \RRR^{d}$, such that
\begin{equation}
\int_{|x-x_{n}|\leq (T-t_{n})^{1/2-}} |u|^{2}\geq \|Q\|_{2}^{2}.
\end{equation}
However, as far as we are concerned, it is always of interest to improve the above  estimate to  at least
\begin{equation}\label{conj}
\int_{|x-x_{n}|\leq (T-t_{n})^{1/2}} |u|^{2}\geq \|Q\|_{2}^{2}.
\end{equation}
Estimate \eqref{conj}, for $u_{0}\in H^{1}$ and satisfying \eqref{supercritical},  should follow using a rigidity result due to Rapha\"el \cite{raphael2005stability}, which is highly nontrivial. 
\end{rem}
\subsection{Background}

Equation \eqref{nls} is called focusing since its associated energy is not coersive, and it is called mass critical since its  mass conservation law is invariant under scaling symmetry. 
Both focusing and defocusing NLS are locally well posed in $L^{2}$, \cite{cazenave1989some}, see also \cite{cazenave2003semilinear}, \cite{tao2006nonlinear}. And it is an active research topic to understand the long time dynamic.

It is known that  the solution to the  defocusing NLS with arbitrary $L^{2}$ data does not break down and scatters to some linear solution, \cite{dodson2012global}.

On the other hand, the focusing problem, \eqref{nls}, is known to have  more complicate dynamics,  and the solution may  break down in finite time, \cite{glassey1977blowing}.  It is of great interest to understand the blow up phenomena, rather than just showing the existence of blow up.  We remark that since we are working on the critical space $L^{2}$ and pseudo conformal symmetry\footnote{This symmetry, however, is not a symmetry in $H^{1}$.} is  a symmetry in $L^{2}$,  finite time blow up solutions and non-scattering solutions are essentially the same.

We recall that the ground state $Q$ gives a threshold of scattering dynamic. Indeed, it is known that any solution to \eqref{nls} scatters to a linear solution if  the initial data has mass strictly below the ground state, \cite{dodson2015global}. See also previous work \cite{weinstein1983nonlinear}.

The main purpose of this article is to  understand the possible long time dynamic of \eqref{nls} for solution with mass at or just above the threshold $\|Q\|_{2}^{2}$, hence assumption \eqref{supercritical}.

Under assumption \eqref{supercritical}, the finite time blow up\footnote{Note that finite time blow up should be understood as one of the  long time dynamics rather than a short time dynamic.} solution to \eqref{nls} at  regularity $H^{1}$ has been extensively studied in recent years. We recall the work of  \cite{landman1988rate}\cite{perelman2001blow}, \cite{merle2005blow}, \cite{merle2003sharp} , \cite{merle2006sharp}, \cite{merle2004universality}, \cite{raphael2005stability}, \cite{merle2005profiles} regarding the so-called log-log blow up dynamic.  If one assumes the initial data is in $H^{1}$, with negative energy and statisfies assumption \eqref{supercritical}, then one can upgrade the sequential convergence in Theorem \ref{main1} to convergence as $t\rightarrow T^{+}$, \cite{merle2004universality}.  It is also shown in \cite{merle2004universality} that  for general  $H^{1}$ solution to \eqref{nls} satisfying \eqref{supercritical}, without the sign condition in the  energy, Theorem  \ref{main1} also holds.   Regarding Theorem \ref{main2}, if one assumes the initial data is in $H^{1}$ and blows up in finite time, then the convergence holds as $t\rightarrow T^{+}$ thanks to Merle's complete classification of minimal mass blow up solutions. Moreover, in Theorem \ref{main2}, if one assumes $u$ is radial and in $H^{1}$, $d=2,3$, and $u$ is global, $\|u\|_{2}=\|Q\|_{2}$, then one can still obtain  convergence as $t\rightarrow T^{+}$, due to \cite{li2012rigidity}. \cite{li2012rigidity} indeed shows such solution must be a solitary wave. When  assumes $d\geq 4$, (our work mainly deals with $d=1,2,3$), if one assumes $u$ is radial and  in $L^{2}$, $\|u\|_{2}=\|Q\|_{2}$, global in both sides, then \cite{li2010regularity} shows  such solution must be a solitary wave.

The main purpose of this work is to extend these results, (except the 4 dimensional result \cite{li2010regularity}), to the lower $L^{2}$ regularity.

 We point out if one only wants to show  some sequential weak convergence but does  not want to characterize the limit profile, then assumption \eqref{supercritical} or radial assumption may not be necessary, and  the method in \cite{dkmnew} should be  applicable to prove these kinds of results.  Indeed, it is pointed out in \cite{dkmnew}, their methods and strategy should be able to handle general dispersive equations once a suitable profile decomposition is available. However, assumption \eqref{supercritical} and the radial assumption are very important for us to determine that the limit profile is indeed $Q$. 

Results  with type similar to Theorem \ref{main1}, Theorem \ref{main2} will also appear naturally when one consider the mass concentration phenomena of finite time blow up solutions to \eqref{nls}, we refer to \cite{merle19902}, \cite{cazenave2003semilinear}, \cite{nawa1990mass},\cite{hmidi2005blowup}, \cite{colliander2004ground}, \cite{tzirakis2006mass}, \cite{visan2007blowup}, \cite{bourgain1998refinements}, \cite{keraani2006blow}, \cite{begout2007mass} and reference in their works.

Finally, we point out that our work is also motivated by the recent progress in the solition resolution conjecture for the energy critical wave, (where more complete and general results are available), see \cite{duyckaerts2009universality}, \cite{dkm2}, \cite{dkm3}, \cite{dkmcompact},\cite{dkmprofile}, \cite{dkmnew},\cite{cote2014profiles},\cite{duyckaerts2016concentration},
 \cite{duyckaerts2016soliton}, \cite{duyckaerts2016scattering} and the references therein.

\subsection{Notation}
Throughout this work, $\alpha$ is used to denote a universal small number, $\delta(\alpha)$ is  a small number depending on $\alpha$ such that $\lim_{\alpha\rightarrow 0}\delta(\alpha)=0$. We use $C$ to denote a large constant, it usually changes line by line.

We write  $A\lesssim B$ when 
 $A\leq CB$, for some universal constant $C$,  we write $A\gtrsim B$ if $B\lesssim A$. We write $A\sim B$ if $A\lesssim B$ and $B\lesssim A$. As usual, $A\lesssim_{\sigma} B$ means that $A\leq C_{\sigma} B$, where $C_\sigma$ is a constant depending on $\sigma$
 
We  use the usual functional spaces $L^{p}$,  we will also use the Sobolev space $H^{1}$. We sometimes  write $L^{p}$ for $L^{p}(\RRR^{d})$, and similarly  for the  other spaces.  We also use $L_{t}^{q}L_{x}^{p}$ to denote $L^{q}(\RRR; L^{p}(\RRR^{d}))$.   When a certain function is only defined on $I\times \RRR^{d}$, we  also use the notation $L^{q}(I;L^{p}(\RRR^{d}))$.  Sometimes we use $\|f\|_{p}$ to denote $\|f\|_{L^{p}}$. We also use $\|u\|_{2}$ to denotes the $\|u(0)\|_{2}$, when $u$ is a solution to the Schr\"odinger equation with initial data $u_0$, since  $L^{2}$ is preserved under Schr\"odinger flow.

We use the standard Littlewood-Paley projection operator $P_{<N}, P_{>N}$, we quickly  recall the definition here. Let $\psi$ be a bump function which equals to 1 when $|x|\leq 1$, and vanishes for $|x|\geq 2$, one define the multiplier $P<_{N}$ as 
\begin{equation}
\widehat{P_{N}f}(\xi):=\psi(\frac{\xi}{N})\hat{f}(\xi).
\end{equation} 
And$ P_{>N}=1-P_{<N}.$

We use $<,>$ to denote the usual $ L^{2}$ (complex) inner product.

Finally, for a solution $u(t,x)$, we use $(T^{-}(u),T^{+}(u))$ to denote its lifespan.

\section{Preliminary}
We present the preliminary for this work, experts may skip this section in the reading.
\subsection{Local well posedness (LWP) and stability}
\subsubsection{Classical Strichartz estimates}
The local well poseness of \eqref{nls} is established using the classical Strichartz Estimates. We recall them below.
Consider the linear Schr\"odinger equation:
\begin{equation}\label{ls}
\begin{cases}
iu_{t}+\Delta u=0,\\
u(0,x)=u_{0}\in L^{2}(\RRR^{d}).
\end{cases}
\end{equation}
We use $e^{it\Delta}$ to denote the linear propagator.  One has estimates
\begin{eqnarray}\label{stri}
&&\|e^{it\Delta}u_{0}\|_{L^{2(d+2)/d}_{t,x}}\lesssim \|u_{0}\|_{L^{2}_{x}},\\
&&\left\lVert \int_{0}^{t}e^{i(t-s)\Delta}f(s,x)ds\right\lVert_{L_{t,x}^{2(d+2)/d}\cap L^{\infty}L^{2}}\lesssim \|f\|_{L_{t,x}^{\frac{2(d+2)}{d+4}}}.
\end{eqnarray}
We refer to \cite{cazenave2003semilinear} \cite{keel1998endpoint},\cite{tao2006nonlinear}  and reference therein for a proof.
\begin{rem}
Strichartz estimate  holds in more general case, indeed one has 
\begin{equation}
\left\lVert \int_{0}^{t}e^{i(t-s)\Delta}f(s,x)ds\right\lVert_{L_{t}^{q}L_{x}^{r}}\lesssim \|f\|_{L_{t}^{\tilde{q}'}L_{x}^ {\tilde{r}'}},
\end{equation}
where $(q,r)$, $(\tilde{q},\tilde{r})$ is admissible in the sense $\frac{2}{q}+\frac{d}{r}=\frac{d}{2}$ and $(q,r,d), (\tilde{q}, \tilde{r},d)\neq (2,\infty,2)$. We fix $(q,r)=(\tilde{q}, \tilde{r})=(\frac{2(d+2)}{d}, \frac{2(d+2)}{d})$ for simplicity. Similarly, the local well posedness and stability holds in more general sense.
\end{rem}
In the rest of this section, we quickly recall the classical results in the local well posedness theory without proof. We refer to \cite{cazenave1988cauchy}, \cite{cazenave1990cauchy}, \cite{tao2006nonlinear}, \cite{cazenave2003semilinear} and the reference there in  for a proof.
\subsubsection{Local existence}
\begin{thm}
Given $u_{0}$ in $L^{2}$, there exists  $T=T(u_{0})$ such that there is a unique solution $u(t,x)$ to \eqref{nls}  with initial data $u_{0}$ in the following sense:
\begin{equation}\label{duhamel2}
u(t,x)=e^{it\Delta}u_{0}+i\int_{0}^{t}e^{i(t-\tau)\Delta}(|u|^{4/d}u(\tau))d\tau, \quad t\in [0,T]
\end{equation}
Formula \eqref{duhamel2} holds in space $C([0,T];L^{2})\cap L_{t,x}^{2(d+2)/d}([0,T]\times \RRR^{d})$.
\end{thm}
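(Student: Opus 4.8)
This is the standard fixed-point construction, so I would set up a contraction mapping argument based purely on the Strichartz estimates \eqref{stri}. Fix the solution space
\[
X_{T}:=C([0,T];L^{2})\cap L_{t,x}^{2(d+2)/d}([0,T]\times\RRR^{d}),
\]
with norm $\|u\|_{X_{T}}:=\|u\|_{L^{\infty}([0,T];L^{2})}+\|u\|_{L_{t,x}^{2(d+2)/d}([0,T]\times\RRR^{d})}$, and define the Duhamel map
\[
\Phi(u)(t):=e^{it\Delta}u_{0}+i\int_{0}^{t}e^{i(t-\tau)\Delta}\bigl(|u|^{4/d}u(\tau)\bigr)d\tau.
\]
The plan is to show that, for $T$ small enough depending on $u_{0}$, $\Phi$ is a contraction on the ball $B:=\{u\in X_{T}:\|u\|_{X_{T}}\leq 2\eta\}$ for a suitable $\eta$, and that its unique fixed point is the desired solution.

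The key elementary observation is the nonlinear Hölder bound
\[
\bigl\||u|^{4/d}u\bigr\|_{L_{t,x}^{2(d+2)/(d+4)}}=\|u\|_{L_{t,x}^{2(d+2)/d}}^{1+4/d},
\qquad
\bigl\||u|^{4/d}u-|v|^{4/d}v\bigr\|_{L_{t,x}^{2(d+2)/(d+4)}}\lesssim\bigl(\|u\|_{L_{t,x}^{2(d+2)/d}}^{4/d}+\|v\|_{L_{t,x}^{2(d+2)/d}}^{4/d}\bigr)\|u-v\|_{L_{t,x}^{2(d+2)/d}},
\]
the second using $\bigl||u|^{4/d}u-|v|^{4/d}v\bigr|\lesssim(|u|^{4/d}+|v|^{4/d})|u-v|$. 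Feeding these into \eqref{stri} gives, on $[0,T]$,
\[
\|\Phi(u)\|_{X_{T}}\leq C\|e^{it\Delta}u_{0}\|_{L_{t,x}^{2(d+2)/d}([0,T]\times\RRR^{d})}+C\|u\|_{X_{T}}^{1+4/d},
\]
and an analogous difference estimate for $\|\Phi(u)-\Phi(v)\|_{X_{T}}$. Now I would set $\eta$ so that $C(2\eta)^{4/d}\leq\tfrac14$; since the \emph{global} Strichartz estimate guarantees $e^{it\Delta}u_{0}\in L_{t,x}^{2(d+2)/d}(\RRR\times\RRR^{d})$, dominated convergence gives $\|e^{it\Delta}u_{0}\|_{L_{t,x}^{2(d+2)/d}([0,T]\times\RRR^{d})}\to 0$ as $T\to 0$, so one can pick $T=T(u_{0})$ with this quantity $\leq\eta/C$. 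Then $\Phi:B\to B$ and $\Phi$ is a $\tfrac12$-contraction on $B$, so it has a unique fixed point $u\in B$; that $u$ lies in $C([0,T];L^{2})$ (not merely $L^{\infty}_{t}L^{2}_{x}$) follows from continuity of $e^{it\Delta}u_{0}$ in $L^{2}$ and continuity of the Duhamel integral, again via \eqref{stri}.

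For uniqueness in the full class $C([0,T];L^{2})\cap L_{t,x}^{2(d+2)/d}$ (not just within the small ball), I would argue by a continuity/bootstrap argument: if $u,v$ are two such solutions with the same data, the set of times $t$ on which they agree is closed, nonempty, and—by running the contraction estimate on a short subinterval where both solutions have small $L_{t,x}^{2(d+2)/d}$ norm (possible since the norm is absolutely continuous in $t$)—also open, hence all of $[0,T]$.

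I do not expect any genuine obstacle here; the argument is entirely routine. The only point requiring a little care is the passage from an arbitrary $L^{2}$ datum (not small) to a contraction, which is precisely why $T$ must be allowed to depend on $u_{0}$ and not just on $\|u_{0}\|_{2}$—one exploits the decay in $T$ of the truncated Strichartz norm of the free evolution rather than its smallness. (For data with $\|u_{0}\|_{2}$ already small one could instead take $T=\infty$, but that is not what is claimed.)
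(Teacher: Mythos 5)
The paper does not prove this theorem at all: it states it as a classical fact and defers to \cite{cazenave1988cauchy}, \cite{cazenave1990cauchy}, \cite{cazenave2003semilinear}, \cite{tao2006nonlinear}, where precisely the contraction argument you describe is carried out. So your route is the standard one, and the core of it is sound: the H\"older identity $\||u|^{4/d}u\|_{L^{2(d+2)/(d+4)}_{t,x}}=\|u\|_{L^{2(d+2)/d}_{t,x}}^{1+4/d}$, the difference bound via $\bigl||u|^{4/d}u-|v|^{4/d}v\bigr|\lesssim(|u|^{4/d}+|v|^{4/d})|u-v|$, the choice of $T(u_0)$ through the vanishing of the truncated free Strichartz norm (which is exactly why $T$ depends on $u_0$ and not only on $\|u_0\|_2$ in the critical setting), and the open--closed bootstrap for uniqueness in the full class are all correct.

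One detail in your write-up would fail as literally stated: the ball $B=\{u\in X_T:\|u\|_{X_T}\leq 2\eta\}$ with $\|u\|_{X_T}=\|u\|_{L^\infty_tL^2_x}+\|u\|_{L^{2(d+2)/d}_{t,x}}$ and $\eta$ small is \emph{not} preserved by $\Phi$, because $\|\Phi(u)\|_{L^\infty_tL^2_x}\geq\|u_0\|_2-C(2\eta)^{1+4/d}$, and $\|u_0\|_2$ is not small and does not shrink as $T\to0$. The standard fix is to impose smallness only on the scattering-size component, e.g.\ take $B=\{u:\|u\|_{L^{2(d+2)/d}_{t,x}}\leq 2\eta,\ \|u\|_{L^\infty_tL^2_x}\leq 2\|u_0\|_2\}$ and run the contraction in the metric induced by $\|\cdot\|_{L^{2(d+2)/d}_{t,x}}$ alone (or by the full $X_T$ norm, noting that the Lipschitz constant of $\Phi$ is controlled by $\eta^{4/d}$ only); completeness of $B$ in that weaker metric is standard. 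With that adjustment the argument closes, and the rest of your proposal (continuity in $L^2$, uniqueness by subdividing $[0,T]$ into intervals of small $L^{2(d+2)/d}_{t,x}$ norm) needs no change.
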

\subsubsection{Blow up criteria}
We have the following blow up criteria.
\begin{prop}
Given a solution $u$ with initial data $u_{0}$, assume $(T_{-},T_{+})$ is the maximal time interval $u$ can be defined on, if $T_{+}$ is finite, then
\begin{equation}
\|u\|_{L^{2(d+2)/d}[0,T_{+})\times \RRR^{d}}=\infty
\end{equation} 
Similarly results holds for $T_{-}$.
\end{prop}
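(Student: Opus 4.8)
The statement to prove is the standard blow-up criterion: if the maximal forward existence time $T_+$ of a solution $u$ with $L^2$ data is finite, then $\|u\|_{L^{2(d+2)/d}([0,T_+)\times\RRR^d)}=\infty$. The plan is to argue by contraposition. Suppose instead that $\|u\|_{L^{2(d+2)/d}([0,T_+)\times\RRR^d)}<\infty$; I will show the solution can be extended past $T_+$, contradicting maximality.

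The first step is to upgrade the finiteness of the scattering norm on $[0,T_+)$ to a bound in the full Strichartz space. Using the Duhamel formula \eqref{duhamel2} together with the inhomogeneous Strichartz estimate \eqref{stri}, on any subinterval $J\subset[0,T_+)$ one has
\begin{equation}
\|u\|_{L^\infty_t L^2_x(J)\cap L^{2(d+2)/d}_{t,x}(J)}\lesssim \|u_0\|_{L^2}+\big\||u|^{4/d}u\big\|_{L^{2(d+2)/(d+4)}_{t,x}(J)}\lesssim \|u_0\|_{L^2}+\|u\|_{L^{2(d+2)/d}_{t,x}(J)}^{1+4/d}.
\end{equation}
Since the global scattering norm is finite by assumption, I can partition $[0,T_+)$ into finitely many intervals on each of which $\|u\|_{L^{2(d+2)/d}_{t,x}}$ is smaller than a fixed small threshold, run the above estimate on each piece, and sum; this yields $u\in L^\infty_t L^2_x([0,T_+))$, in particular $u(t)$ has a limit in $L^2$ as $t\uparrow T_+$. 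Concretely, one shows the tails $\|u\|_{L^{2(d+2)/d}_{t,x}([T_+-\eta,T_+))}\to 0$ as $\eta\to 0$, so that $\{u(t)\}$ is Cauchy in $L^2$ as $t\uparrow T_+$: writing the difference $u(t)-u(t')$ via Duhamel and applying the inhomogeneous Strichartz estimate over $[t',t]$ controls it by the (small) scattering norm on that tail interval.

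The second step is to solve the Cauchy problem with the limiting data $u(T_+):=\lim_{t\uparrow T_+}u(t)\in L^2$ using the local existence theorem already quoted in the excerpt, obtaining a solution on $[T_+,T_++\tau]$ for some $\tau>0$; gluing this to $u$ on $[0,T_+)$ produces a solution on $[0,T_++\tau]$ in the Duhamel sense, contradicting the definition of $T_+$ as the maximal forward time. The argument for $T_-$ is identical by time reversal.

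The main obstacle — really the only nontrivial point — is the first step: establishing that a finite scattering norm forces the solution to remain bounded in $L^\infty_t L^2_x$ with a genuine $L^2$ limit at $T_+$. This is where the small-data Strichartz iteration must be applied locally and summed, exploiting that the $L^{2(d+2)/d}_{t,x}$ norm is absolutely continuous with respect to the time interval. Everything else (the local theory, the gluing) is a direct appeal to results stated earlier.
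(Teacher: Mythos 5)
Your proof is correct and is the standard contrapositive argument (finite scattering norm $\Rightarrow$ Strichartz bounds on small subintervals $\Rightarrow$ $u(t)$ Cauchy in $L^2$ as $t\uparrow T_+$ $\Rightarrow$ extension by local existence). The paper itself gives no proof of this proposition, deferring to the classical local well-posedness references (Cazenave--Weissler, Tao), and your argument is precisely the one found there.
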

Now, we can define the notion of scattering.
\begin{definition}\label{defnscatter}
We say a solution $u$ to \eqref{nls} scatters forward  if $T^{+}=\infty$ and $\|u\|_{L^{2(d+2)/d}[0,T_{+})\times \RRR^{d}}<\infty.$.
Similarly, we define the notion of scattering backward. If $u$ scatters both backward and forward, then we say $u$ scatters.
\end{definition}
\subsubsection{Small data theory}
\begin{prop}\label{smalldata}
There exists $\epsilon_{0}>0$ such that
if the initial data $u_{0}\in L^{2}$ and  satisfy
\begin{equation}\label{small}
\|e^{it\Delta}u_{0}\|_{L^{2(d+2)/d}_{t,x}}\leq \epsilon_{0}
\end{equation}
 then the solution to \eqref{nls} with initial data $u_{0}$ is global and one has estimate
\begin{equation}
\|u(t,x)\|_{L^{2(d+2)/d}_{t,x}}\lesssim \|u_{0}\|_{2}.
\end{equation}
\end{prop}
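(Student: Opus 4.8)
The statement to prove is Proposition \ref{smalldata}, the standard small-data global well-posedness and spacetime bound for the mass-critical NLS. This is a classical contraction-mapping argument, so my plan is the textbook one.

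\medskip

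\textbf{Setup.} Let me define the Duhamel map $\Phi(u)(t) := e^{it\Delta}u_0 + i\int_0^t e^{i(t-s)\Delta}(|u|^{4/d}u)(s)\,ds$, and work on the complete metric space $X := \{u \in L_{t,x}^{2(d+2)/d}(\RRR\times\RRR^d) : \|u\|_{L_{t,x}^{2(d+2)/d}} \leq 2\epsilon_0\}$ with the metric induced by the $L_{t,x}^{2(d+2)/d}$ norm. First I would check $\Phi$ maps $X$ to itself: by the Strichartz estimate \eqref{stri} and the hypothesis \eqref{small},
\begin{equation}
\|\Phi(u)\|_{L_{t,x}^{2(d+2)/d}} \leq \|e^{it\Delta}u_0\|_{L_{t,x}^{2(d+2)/d}} + C\big\||u|^{4/d}u\big\|_{L_{t,x}^{2(d+2)/(d+4)}} \leq \epsilon_0 + C\|u\|_{L_{t,x}^{2(d+2)/d}}^{1+4/d},
\end{equation}
where the last step uses Hölder with the algebraic identity $\frac{d+4}{2(d+2)} = (1+\frac{4}{d})\cdot\frac{d}{2(d+2)}$, i.e. the nonlinearity $|u|^{4/d}u$ has $L_{t,x}^{2(d+2)/(d+4)}$ norm bounded by $\|u\|_{L_{t,x}^{2(d+2)/d}}^{1+4/d}$. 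On $X$ this is $\leq \epsilon_0 + C(2\epsilon_0)^{1+4/d} \leq 2\epsilon_0$ once $\epsilon_0$ is chosen small enough (so that $C(2\epsilon_0)^{4/d} \cdot 2 \leq 1$).

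\medskip

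\textbf{Contraction.} Next I would show $\Phi$ is a contraction on $X$: for $u,v\in X$, Strichartz gives
\begin{equation}
\|\Phi(u)-\Phi(v)\|_{L_{t,x}^{2(d+2)/d}} \leq C\big\||u|^{4/d}u - |v|^{4/d}v\big\|_{L_{t,x}^{2(d+2)/(d+4)}},
\end{equation}
and the pointwise bound $\big||u|^{4/d}u - |v|^{4/d}v\big| \lesssim (|u|^{4/d}+|v|^{4/d})|u-v|$ combined with Hölder yields a bound $\leq C(\|u\|_{L_{t,x}^{2(d+2)/d}}^{4/d} + \|v\|_{L_{t,x}^{2(d+2)/d}}^{4/d})\|u-v\|_{L_{t,x}^{2(d+2)/d}} \leq C(2\epsilon_0)^{4/d}\|u-v\|_{L_{t,x}^{2(d+2)/d}}$, which is a strict contraction for $\epsilon_0$ small. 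By the Banach fixed point theorem there is a unique fixed point $u\in X$, which is the desired global solution (globality because the $L_{t,x}^{2(d+2)/d}$ bound on all of $\RRR$ rules out the blow-up criterion). Finally, to get the quantitative bound $\|u\|_{L_{t,x}^{2(d+2)/d}} \lesssim \|u_0\|_2$, I would re-run the Strichartz estimate on the solution once more, using $\|e^{it\Delta}u_0\|_{L_{t,x}^{2(d+2)/d}} \lesssim \|u_0\|_2$ (the homogeneous Strichartz estimate \eqref{stri}) and absorbing the nonlinear term: $\|u\|_{L_{t,x}^{2(d+2)/d}} \leq C\|u_0\|_2 + C\|u\|_{L_{t,x}^{2(d+2)/d}}^{1+4/d} \leq C\|u_0\|_2 + \tfrac12\|u\|_{L_{t,x}^{2(d+2)/d}}$ since $C\|u\|^{4/d}_{L^{2(d+2)/d}_{t,x}} \leq C(2\epsilon_0)^{4/d}\leq\tfrac12$, and rearranging gives $\|u\|_{L_{t,x}^{2(d+2)/d}}\leq 2C\|u_0\|_2$. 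One also recovers $\|u\|_{L_t^\infty L_x^2}\lesssim \|u_0\|_2$ from the inhomogeneous estimate in \eqref{stri}.

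\medskip

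\textbf{Main obstacle.} There is essentially no serious obstacle here — the argument is entirely standard. The only points requiring a little care are: (i) verifying the Hölder exponent bookkeeping so that the nonlinearity lands in the dual Strichartz space $L_{t,x}^{2(d+2)/(d+4)}$, which is exactly the mass-critical scaling identity; and (ii) the fact that the fixed point produced in $X$, which is a priori only an $L_{t,x}^{2(d+2)/d}$ function, is genuinely the strong solution in $C_t L_x^2 \cap L_{t,x}^{2(d+2)/d}$ — this follows because the Duhamel term automatically lies in $C_t L^2_x$ by the inhomogeneous Strichartz estimate \eqref{stri} and $e^{it\Delta}u_0 \in C_tL^2_x$, so $u = \Phi(u)$ inherits $C_tL^2_x$ regularity, and uniqueness in the larger class follows from the same contraction estimate localized to small subintervals. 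I would present (i)–(ii) briefly and cite \cite{cazenave2003semilinear}, \cite{tao2006nonlinear} for the routine details.
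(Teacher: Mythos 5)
Your proof is correct and is exactly the standard contraction-mapping argument that the paper itself omits, deferring instead to the cited references (\cite{cazenave1988cauchy}, \cite{cazenave2003semilinear}, \cite{tao2006nonlinear}); the Hölder bookkeeping, the absorption step for the final bound $\|u\|_{L_{t,x}^{2(d+2)/d}}\lesssim\|u_0\|_2$, and the remark on recovering $C_tL_x^2$ regularity are all handled correctly. No issues.
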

\begin{rem}\label{remsmalldata}
By Strichartz estimate \eqref{stri}, it is clear that \eqref{small} holds when $\|u_{0}\|_{2}$ is small enough.
\end{rem}
\subsubsection{Stability}
We state a stability result  about \eqref{nls}. This kind of argument is  standard nowadays. One may  refer to Lemma 3.9, lemma 3.10 in \cite{colliander2008global}.
\begin{prop}\label{stability}
Let $I$ be a compact interval, and  $0\in I$. Let $\tilde{u}$ is a near-solution to \eqref{nls} in the sense
\begin{equation}
i\tilde{u}_{t}+\Delta \tilde{u}+|\tilde{u}|^{4/d}\tilde{u}=e,
\end{equation}
and the following estimate holds
\begin{eqnarray}
\|\tilde{u}\|_{L^{2(d+2)/d}(I\times \RRR)}\leq M,\\
\|\tilde{u}\|_{L_{I}^{\infty}L^{2}(\RRR)}\leq E,\\
\|e\|_{L^{\frac{2(d+2)}{d+4}}(I\times \RRR)}\leq \epsilon,
\end{eqnarray}
where $\epsilon<\epsilon_{1}:=\epsilon_{1}(M_{1},M_{2})$.
Assume further there exists $t_{0}\in I$ and $u_{0} \in L^{2}$ such that
\begin{equation}
\|\tilde{u}(t_{0})-u_{0}\|_{2}<\epsilon,
\end{equation}
Then, there exists a unique solution $u(t,x)$ to the Cauchy Problem
\begin{equation}
\begin{cases}
iu_{t}+\Delta u+|u|^{4/d}u=0,\\
u(t_{0})=u_{0}.
\end{cases}
\end{equation}
such that
\begin{eqnarray}
\|u-\tilde{u}\|_{L^{\frac{2(d+2)}{d}}(I\times \RRR)\cap L_{I}^{\infty}L^{2}(\RRR)}\l \lesssim_{M_{1},M_{2}} \epsilon.
\end{eqnarray}

In particular
\begin{equation}
\|u\|_{L^{2(d+2)/d}(I\times \RRR)}\lesssim_{M_{1},M_{2}} 1.
\end{equation}
\end{prop}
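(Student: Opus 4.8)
The final statement in the excerpt is Proposition~\ref{stability}, the stability (long-time perturbation) result for the mass-critical NLS. I outline how I would prove it.

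\medskip

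\emph{Plan.} The proof is a standard bootstrap/continuity argument built on the Strichartz estimates \eqref{stri} and the Duhamel formula, in the spirit of Lemma~3.9--3.10 of \cite{colliander2008global}. Write $w = u - \tilde{u}$ for the (to-be-constructed) error. Then $w$ solves
\begin{equation}
  i w_t + \Delta w + \bigl(|\tilde u + w|^{4/d}(\tilde u + w) - |\tilde u|^{4/d}\tilde u\bigr) = -e,
\end{equation}
with $\|w(t_0)\|_2 < \eps$. First I would partition the compact interval $I$ into $N = N(M,E)$ consecutive subintervals $I_1,\dots,I_N$ on each of which $\|\tilde u\|_{L^{2(d+2)/d}(I_j\times\RRR^d)} \le \eta$, where $\eta$ is a small absolute constant to be fixed; this is possible since $\|\tilde u\|_{L^{2(d+2)/d}(I\times\RRR^d)} \le M$ is finite. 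On a single such subinterval, Strichartz applied to the Duhamel formula for $w$, together with the pointwise nonlinear difference bound $\bigl||a+b|^{4/d}(a+b) - |a|^{4/d}a\bigr| \lesssim |b|^{1+4/d} + |a|^{4/d}|b|$ (valid since $4/d \le 4$ here, i.e.\ $d\ge 1$; for $d=1$ one uses $|a|^4|b|$ plus lower-order terms, all H\"older-admissible), and H\"older in spacetime, yields on $I_j$ a bound of the schematic form
\begin{equation}
  \|w\|_{S(I_j)} \lesssim \|w(t_j)\|_2 + \|e\|_{L^{2(d+2)/(d+4)}(I_j\times\RRR^d)} + \eta^{4/d}\|w\|_{S(I_j)} + \|w\|_{S(I_j)}^{1+4/d},
\end{equation}
where $S(I_j)$ is the norm $L^{2(d+2)/d}_{t,x} \cap L^\infty_t L^2_x$ restricted to $I_j\times\RRR^d$. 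Choosing $\eta$ small absorbs the linear error term on the left; then a standard continuity argument shows that if $\|w(t_j)\|_2 + \|e\|_{\cdots}$ is small enough (smaller than some $\eps_1'$ depending only on $\eta$, hence only on $M,E$), then $\|w\|_{S(I_j)} \lesssim \|w(t_j)\|_2 + \|e\|_{L^{2(d+2)/(d+4)}(I_j\times\RRR^d)}$.

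\medskip

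Next I would iterate this over $j = 1,\dots,N$. The output bound $\|w(t_{j+1})\|_2 \le \|w\|_{S(I_j)}$ feeds into the hypothesis on $I_{j+1}$, so the constant multiplies by a fixed factor at each step, giving $\|w(t_{j+1})\|_2 \le C_j(\|w(t_0)\|_2 + \|e\|_{L^{2(d+2)/(d+4)}(I\times\RRR^d)}) \le C_j(\eps + \eps)$; since $N$ depends only on $M$ and $E$, the final constant $C_N$ depends only on $M,E$. One must choose $\eps_1 = \eps_1(M,E)$ at the outset small enough that $C_j \cdot 2\eps_1 < \eps_1'$ for every $j \le N$, so that the per-interval smallness hypothesis is maintained throughout the induction. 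Summing the per-interval estimates gives $\|u-\tilde u\|_{S(I)} \lesssim_{M,E} \eps$, and the triangle inequality with $\|\tilde u\|_{L^{2(d+2)/d}(I\times\RRR^d)}\le M$ gives $\|u\|_{L^{2(d+2)/d}(I\times\RRR^d)}\lesssim_{M,E} 1$. Existence and uniqueness of $u$ itself on all of $I$ follows from the same contraction mapping / a priori bound: the finiteness of the scattering-size norm on $I$ together with the blow-up criterion (Proposition above) prevents breakdown inside $I$. (I note the statement's hypothesis writes $\eps_1 = \eps_1(M_1,M_2)$ and the conclusion $\lesssim_{M_1,M_2}$, which should read $M,E$; this is a harmless typo.)

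\medskip

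\emph{Main obstacle.} There is no deep obstacle — this is textbook perturbation theory — but the one point requiring genuine care is the interval-subdivision/induction bookkeeping: one must fix the constants in the right order ($\eta$ absolute $\rightsquigarrow$ $N = N(M,E)$ $\rightsquigarrow$ $\eps_1 = \eps_1(N)$, hence $\eps_1(M,E)$), and verify that the geometrically growing constant over $N$ steps still allows the smallness hypothesis to be propagated. A secondary technical nuisance is the low-dimensional case (especially $d=1$, where $4/d = 4$ and the nonlinearity is not $C^2$ but only H\"older continuous in the relevant sense): the pointwise difference estimate must be stated with the fractional-power terms $|b|^{4/d}|b|$ handled directly by H\"older, without differentiating, but the exponents $\tfrac{2(d+2)}{d}$ and $\tfrac{2(d+2)}{d+4}$ are exactly matched for this and it goes through. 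I would cite \cite{colliander2008global} for the details rather than reproduce them in full.
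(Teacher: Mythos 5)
Your proposal is correct and is exactly the standard interval-subdivision bootstrap argument that the paper itself defers to (Lemma 3.9--3.10 of \cite{colliander2008global}); the paper gives no independent proof beyond that citation. Your bookkeeping of the constant dependencies ($\eta$ absolute, then $N=N(M,E)$, then $\epsilon_{1}$) and your remark that $M_{1},M_{2}$ should read $M,E$ are both accurate.
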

\subsection{Scattering below the mass of the ground state}
The dynamic of \eqref{nls} for initial data with mass blow up the groud state is known, due to the following Theorem by Dodson ,\cite{dodson2015global}.
\begin{thm}[Dodson]\label{dodson}
Consider the Cauchy Problem \eqref{nls}, with the initial data $u_{0}$ such that
\begin{equation}\label{restriction}
\|u_{0}\|_{L_{x}^{2}}<\|Q\|_{L_{x}^{2}}.
\end{equation}
 The solution $u$ to \eqref{nls} is global, further more it scatters 
\begin{equation}\label{scattering}
\|u(t,x)\|_{L^{2(d+2)/d}_{t,x}}<\infty.
\end{equation}
\end{thm}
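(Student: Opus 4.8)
\medskip
\noindent\textbf{Proof strategy.}
The plan is to deploy the concentration-compactness/rigidity scheme of Kenig--Merle, in the form adapted to the mass-critical problem by Dodson, using the strict inequality \eqref{restriction} only at the very end, to neutralize the focusing sign. First I would argue by contradiction: if \eqref{scattering} fails for some $u_{0}$ with $\|u_{0}\|_{2}<\|Q\|_{2}$, then by the small-data theory (Proposition \ref{smalldata}) and the stability result (Proposition \ref{stability}) there is a critical mass $m_{c}$, $0<m_{c}<\|Q\|_{2}^{2}$, such that every solution of mass $<m_{c}$ scatters while scattering first fails at mass $m_{c}$. Feeding the $L^{2}$ linear profile decomposition --- which must be taken modulo the full symmetry group of \eqref{nls}, in particular including Galilean boosts, since the problem is mass critical --- into this induction-on-mass scheme produces a minimal-mass, non-scattering solution $u$ with $\|u\|_{2}^{2}=m_{c}$ that is almost periodic modulo symmetries: there exist $N(t)>0$, $x(t)\in\RRR^{d}$, $\xi(t)\in\RRR^{d}$ so that the orbit of $u(t)$ under rescaling by $N(t)$, translation by $x(t)$ and Galilean boost by $\xi(t)$ is precompact in $L^{2}$. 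This part is, by now, routine and does not use \eqref{restriction}.

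The second step is to normalize the almost periodic solution and reduce to a short list of enemies. After a time-reparametrization argument one may assume $u$ lives on $[0,\infty)$ and is of one of two types: a \emph{rapid frequency cascade}, with $N(t)\le1$ on $[0,\infty)$ and $\liminf_{t\to\infty}N(t)=0$; or a \emph{quasi-soliton}, with $N(t)\ge1$ on $[0,\infty)$ and $\int_{0}^{\infty}N(t)^{2}\,dt=\infty$ (a self-similar-type scenario is eliminated by similar considerations). The engine for handling both is the \emph{long-time Strichartz estimate}: on a compact interval $J\subset[0,\infty)$ with $\int_{J}N(t)^{2}\,dt=K$ one controls the high-frequency part $\|P_{>K^{\theta}}u\|_{L^{2(d+2)/d}_{t,x}(J)}$, and companion norms, by the mass with a quantitative gain, so that the errors created by truncating $u$ in frequency become summable. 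This estimate is proved by a bootstrap over dyadic blocks, using only the Strichartz bounds \eqref{stri} and almost periodicity, and it is the most technical ingredient of the whole argument.

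Granting the long-time Strichartz estimate, the rapid cascade is excluded by a frequency-localized mass conservation (``no escape of mass to low frequencies'') argument: almost periodicity forces $\|\nabla u(t)\|_{2}\sim N(t)$ after the above normalization, so if $\liminf N(t)=0$ a positive fraction of the conserved mass would have to accumulate near frequency zero along a time sequence, contradicting the additivity of mass together with the long-time Strichartz control of the high frequencies. The quasi-soliton is excluded by a frequency-localized interaction Morawetz inequality: cutting $u$ off to frequencies $\lesssim$ the current scale and running the interaction Morawetz computation gives $\int_{J}\int\frac{|u(t,x)|^{2+4/d}}{|x|}\,dx\,dt\lesssim_{m_{c}}1$ up to long-time Strichartz errors, whereas almost periodicity forces the left-hand side to be $\gtrsim\int_{J}N(t)\,dt\to\infty$. (In $d=1$ the interaction Morawetz is weaker, so this step needs extra care and the argument is really run dimension by dimension.)

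The last step --- and the conceptual core of the \emph{focusing} case --- is where \eqref{restriction} is used. The Morawetz and mass-localization arguments above are the defocusing ones, so the focusing sign of the nonlinearity must be neutralized. Here I would invoke the sharp Gagliardo--Nirenberg inequality
\[
\|f\|_{L^{2+4/d}}^{2+4/d}\;\le\;\frac{d+2}{d}\,\|Q\|_{2}^{-4/d}\,\|f\|_{2}^{4/d}\,\|\nabla f\|_{2}^{2},
\]
whose optimizer is $Q$. Since $\|u(t)\|_{2}=\|u_{0}\|_{2}<\|Q\|_{2}$, this gives $E(u)\ge\tfrac12\bigl(1-(\|u_{0}\|_{2}/\|Q\|_{2})^{4/d}\bigr)\|\nabla u(t)\|_{2}^{2}$, a genuine coercivity gap; consequently the potential energy can be absorbed into the kinetic energy, the rescaled orbit has finite kinetic energy, and the energy and virial quantities entering the Morawetz step retain the favorable sign. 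Without \eqref{restriction} everything collapses --- indeed $Qe^{it}$ is itself a non-scattering solution at mass exactly $\|Q\|_{2}^{2}$. I expect the main obstacle to be precisely the long-time Strichartz estimate and its delicate insertion into the frequency-localized interaction Morawetz inequality: because the equation is $L^{2}$-critical there is no Sobolev embedding to spend, so every frequency truncation has to be paid for exactly, and the bookkeeping of the resulting error terms is where essentially all the difficulty lies.
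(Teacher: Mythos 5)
Your outline is essentially the same argument as the one the paper relies on: Theorem \ref{dodson} is quoted from Dodson's work rather than proved here, and the paper's own review of that proof (Section \ref{quasisolitoncase}) follows exactly your scheme --- reduction to a minimal-mass almost periodic solution, the long-time Strichartz estimate, exclusion of the rapid cascade via additional regularity and zero energy, and exclusion of the quasi-soliton via a frequency-localized (interaction) Morawetz estimate whose coercivity comes from the sharp Gagliardo--Nirenberg inequality and the strict mass bound \eqref{restriction}. The only caveats are cosmetic: the relevant normalization in the cited works is $\int_J N(t)^{3}\,dt=K$ rather than $\int_J N(t)^{2}\,dt$, and of course the long-time Strichartz estimate itself, which you correctly flag as the technical heart, is taken as a black box in both your sketch and the paper.
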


\subsection{Concentration compactness}
Strichartz estimates \eqref{stri} lacks compactness due to the symmetry of equation \eqref{ls}. Note the aforementioned symmetries for the nonlinear equation \eqref{nls} also hold for the linear equation. Profile decomposition is the tool to remedy this.

Let us start with the following definition,
\begin{definition}\label{group}
Let $G:=\{g=g_{x_{0},t_{0},\xi_{0},\lambda_{0}}| x_{0},t_{0},\xi_{0}\in \RRR, \lambda_{0}\in \RRR_{+}\}$, where
$g_{x_{0},\xi_{0},\lambda_{0},\xi_{0}}$ is a map $L^{2}(\RRR)\rightarrow L^{2}(\RRR)$:
\begin{equation}\label{groupdefi}
g_{x_{0},\xi_{0},\lambda_{0},t_{0}}f(x):=\frac{1}{\lambda_{0}^{\frac{d}{2}}}e^{ix\xi_{0}}(e^{i(-\frac{t_{0}}{\lambda_{0}^{2}})\Delta}f)(\frac{x-x_{0}}{\lambda_{0}}).
\end{equation}
\end{definition}
\begin{rem}\label{unitary}
$G$ is a group acting on $L^{2}$ and  for any $g\in G, f\in L^{2}$, $\|g\cdot f\|_{2}=\|f\|_{2}$.
\end{rem}
\subsubsection{Profile decomposition}
One has
\begin{prop}[Theorem 5.4 \cite{begout2007mass},  Theorem 4.2 \cite{tao2008minimal}, Theorem 2 \cite{merle1998compactness}]\label{profiledecomposition}
Let $\{u_{n}\}_{n=1}^{\infty}$ be bounded in $L^{2}(\RRR^{d})$, then up to extracting subsequence, there exist a family of $L^{2}$ functions $\phi_{j}$, 
$j=1,2,\cdots$ and group elements $g_{j,n}\in G$ , where  $g_{j,n}=(g_{j,n})_{x_{j,n},\xi_{j,n},\lambda_{j,n},t_{j,n}}$, such that for all $ l=1,2,\cdots$ we have the decomposition
\begin{equation}\label{tempdeco}
u_{n}= \sum_{j=1}^{l}g_{j,n}\phi_{j}+\omega_{n}^{l},
\end{equation}
(here \eqref{tempdeco} defines $\omega_{n}^{l}$.)
And the following properties hold
\begin{itemize}
\item Asymptotically orthogonality of the group elements:
For any $j\neq j'$,  one has 
\begin{equation}\label{orthpara}
\frac{\lambda_{j,n}}{\lambda_{j',n}}+\frac{\lambda_{j',n}}{\lambda_{j,n}}+|\frac{x_{j,n}-x_{j',n}}{\lambda_{j,n}}|+|\lambda_{j,n}(\xi_{j,n}-\xi_{j',n})|+|\frac{t_{j,n}-t_{j',n}}{\lambda_{j,n}^{2}}|\xrightarrow{n\rightarrow \infty}\infty
\end{equation}
\item Asymptotically orthogonality of mass:
\begin{equation}\label{orthmass}
\forall l\geq 1, \lim_{n\rightarrow \infty}|M(u_{n})-\sum_{j\leq l}M(\phi_{j})-M(\omega^{l}_{n})|=0.
\end{equation}
\item Asymptotically orthogonality of Strichartz norm:
\begin{equation}\label{orthstri}
\forall j\neq j', \lim_{n\rightarrow \infty}\|e^{it\Delta}(g_{j,n}\phi_{j})e^{it\Delta}(g_{j',n}\phi_{j'})\|_{L_{t,x}^{(d+2)/d}}=0.
\end{equation}
\item Smallness of remainder term:
\begin{equation}\label{smallnessofmass}
\lim_{l\rightarrow\infty}\limsupn \|e^{it\Delta}\omega_{n}^{l}\|_{L_{t,x}^{2(d+2)/d}}=0
\end{equation}
\item Weak limit condition of the remainder term:
\begin{equation}\label{weakconve}
\forall j\leq l, g_{j,n}^{-1}\omega^{l}_{n}\rightharpoonup 0 \text{ in } L_{x}^{2}
\end{equation}
\end{itemize}

\end{prop}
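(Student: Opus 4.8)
The plan is to prove the profile decomposition by the standard concentration–compactness machinery for the mass-critical free Schr\"odinger flow (as in \cite{merle1998compactness,begout2007mass,tao2008minimal}), whose engine is an \emph{inverse Strichartz inequality}. The first step is to establish: there is $\beta=\beta(d)>0$ so that whenever $\{v_n\}$ is bounded in $L^2(\RRR^d)$ with $A:=\limsupn\|v_n\|_2$ and $\eps:=\limsupn\|e^{it\Delta}v_n\|_{L^{2(d+2)/d}_{t,x}}>0$, there are — after passing to a subsequence — group elements $g_n\in G$ and a nonzero $\phi\in L^2$ with $g_n^{-1}v_n\rightharpoonup\phi$ weakly in $L^2$ and
\[
\|\phi\|_2 \gtrsim \eps\,\Big(\tfrac{\eps}{A}\Big)^{\beta}.
\]
I would obtain this from a refined Strichartz estimate — bounding $\|e^{it\Delta}f\|_{L^{2(d+2)/d}_{t,x}}$ by $\|f\|_2^{1-\theta}$ times a weaker quantity measuring the largest dyadic-frequency / spacetime-cube piece of $e^{it\Delta}f$ (via Littlewood--Paley decomposition together with the bilinear Strichartz estimate, or equivalently the restriction-theoretic refinement of Moyua--Vargas--Vega). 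Pigeonholing locates, for each $n$, a cube carrying a fixed fraction of the Strichartz norm; normalizing it to the unit scale by an element of $G$ (scaling, space translation, Galilean boost and time translation — exactly the parameters built into Definition \ref{group}) and extracting a weak $L^2$-limit produces $\phi$, and a local compactness (Rellich) argument upgrades this to a nonzero limit with the displayed lower bound.

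Granting this, I would extract the profiles by iteration and diagonalization. Set $\omega_n^0:=u_n$; given $\omega_n^{l-1}$, if $\limsupn\|e^{it\Delta}\omega_n^{l-1}\|_{L^{2(d+2)/d}_{t,x}}=0$ there is nothing more to do, and otherwise apply the inverse Strichartz inequality to $\{\omega_n^{l-1}\}$ to produce $\phi_l\neq 0$ and $g_{l,n}$ with $g_{l,n}^{-1}\omega_n^{l-1}\rightharpoonup\phi_l$, then set $\omega_n^l:=\omega_n^{l-1}-g_{l,n}\phi_l$; a diagonal subsequence makes all these weak limits hold along a single sequence. The weak-limit property \eqref{weakconve} holds for $j=l$ by construction and for $j<l$ by induction once \eqref{orthpara} is known, since then $g_{j,n}^{-1}g_{l,n}$ escapes every compact set of $G$, hence $g_{j,n}^{-1}g_{l,n}\phi_l\rightharpoonup 0$. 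I would prove \eqref{orthpara} by contradiction and induction on the pair of indices: if it failed for some $(j,j')$ with $j<j'$, then along a further subsequence $g_{j,n}^{-1}g_{j',n}$ would converge to a fixed $g_\infty\in G$, and the weak limit of $g_{j,n}^{-1}\omega_n^{j'-1}$ would be simultaneously $0$ (by \eqref{weakconve} at stage $j$) and $g_\infty\phi_{j'}\neq 0$, a contradiction.

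With parameter orthogonality in hand, \eqref{orthmass} is Pythagoras plus weak convergence: since $g_{l,n}$ is unitary on $L^2$ (Remark \ref{unitary}) and $g_{l,n}^{-1}\omega_n^{l-1}\rightharpoonup\phi_l$, while $g_{l,n}^{-1}\omega_n^l=g_{l,n}^{-1}\omega_n^{l-1}-\phi_l\rightharpoonup 0$, expanding $\|\omega_n^{l-1}\|_2^2=\|\phi_l\|_2^2+\|\omega_n^l\|_2^2+o(1)$ and summing over $l$ gives \eqref{orthmass}. Then $\sum_j\|\phi_j\|_2^2\le A^2<\infty$, so $\|\phi_j\|_2\to0$; feeding this back into the quantitative lower bound of the first step forces $\limsupn\|e^{it\Delta}\omega_n^l\|_{L^{2(d+2)/d}_{t,x}}\to0$ as $l\to\infty$, which is \eqref{smallnessofmass}. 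Finally \eqref{orthstri} follows from \eqref{orthpara}: after the change of variables undoing $g_{j,n}$, the product becomes $e^{it\Delta}\phi_j$ times a $(g_{j,n}^{-1}g_{j',n})$-transported copy of $e^{it\Delta}\phi_{j'}$, and approximating $\phi_j,\phi_{j'}$ by smooth compactly supported functions and treating separately the cases where the scales, the spatial centers, the frequency centers, or the time centers diverge shows the $L^{(d+2)/d}_{t,x}$ norm of the product tends to $0$.

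I expect the main obstacle to be the first step — the refined/bilinear Strichartz estimate and the extraction of the first nonzero profile with an explicit mass lower bound. Once that quantitative dichotomy is available, the remainder (iteration, diagonalization, and the weak-limit / Pythagoras / group-theoretic bookkeeping) is routine.
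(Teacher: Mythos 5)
This proposition is quoted in the paper from \cite{begout2007mass}, \cite{tao2008minimal}, \cite{merle1998compactness} without proof, and your outline is precisely the argument of those references: a restriction-theoretic inverse Strichartz inequality with a quantitative mass lower bound, iterative extraction with diagonalization, orthogonality of parameters by contradiction, and the Pythagoras/density bookkeeping for \eqref{orthmass}, \eqref{smallnessofmass}, \eqref{orthstri}. The proposal is correct and takes essentially the same approach as the paper's sources.
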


According to Proposition \ref{profiledecomposition},  we define 
\begin{definition}
A linear profile is a function $f\in L^{2}$ and a sequence $\{g_{n}\}_{n}\subset G$, or equivalently a function $f\in L^{2}$ with parameters  $\{x_{n},\xi_{n},\lambda_{n},t_{n}\}_{n}\subset \RRR\times \RRR \times \RRR_{+}\times \RRR $.
\end{definition}

Up to extracting subsequence and adjusting the profile,  for every profile decomposition as in Proposition \ref{profiledecomposition},  we always assume without loss of generality that $\limn-\frac{-t_{j,n}}{\lambda_{j,n}^{2}}=0$ or equal to $\pm \infty$. This leads to the following standard definition:
\begin{definition}
We call a profile $f$ with parameter $\{x_{n},\xi_{n},\lambda_{n},t_{n}\}_{n}$
\begin{itemize}
\item Compact profile if $t_{n}\equiv 0$,
\item Forward scattering profile if $-\frac{t_{n}}{\lambda_{n}^{2}}=\infty$,
\item Backward scattering profile if $-\frac{t_{n}}{\lambda_{n}^{2}}=-\infty$.
\end{itemize}
\end{definition}

\subsubsection{Nonlinear approximation}
To deal with the nonlinear equation \eqref{nls}, one needs the notion of nonlinear profile.

\begin{definition}
Given a linear  profile, i.e. a function $f\in L^{2}$ and a sequence $\{g_{n}\}_{n}\subset G$, or equivalently a function $f\in L^{2}$ with parameters  $\{x_{n},\xi_{n},\lambda_{n},t_{n}\}_{n}\subset \RRR^{d}\times \RRR^{d} \times \RRR_{+}\times \RRR $.
We say $U$ is  the nonlinear profile associated with this linear profile if $U$ is a solution to nonlinear Schr\"odinger equation
\begin{equation}
iU_{t}+\Delta U+|U|^{4/d}U=0,
\end{equation}
and satisfy the estimate
\begin{equation}\label{propnonlinear}
\limn \left\|U(-\frac{t_{n}}{\lambda_{j,n}^{2}})-e^{i(-\frac{t_{n}}{\lambda_{n}^{2}})\Delta}f \right\|_{2}=0.
\end{equation}
 \eqref{propnonlinear} is only required for n large enough. For \eqref{propnonlinear} to make sense, we require U is defined in a neighborhood \footnote{In profile decomposition, one usually needs to  extract subsequence many times,  we always assume that $\limn -\frac{t_{n}}{\lambda_{n}^{2}}$ exits or equal to $\pm \infty$. We also define the neighborhood of $\infty$ as $(M,\infty)$ for any M, similarly we define the neighborhood of $-\infty$.}of $\limn -\frac{t_{n}}{\lambda_{n}^{2}}$. 
\end{definition}
\begin{rem}
Given a linear profile, the associated  nonlinear profile exists and is unique. The existence and uniqueness of the nonlinear profile basically relies on the local well posedness theory of \eqref{nls}. This is quite standard, see Notation 2.6 in \cite{duyckaerts2009universality}.
\end{rem}

Now, we state a nonlinear approximation for NLS. Though we do not give the detailed proof here, we point out it  is essentially the consequence of asymptotically  orthogonality \eqref{orthpara} and the classical stability theory Proposition \ref{stability}, see  \cite{begout2007mass},\cite{merle1998compactness}, \cite{tao2008minimal} for more details.
\begin{prop}\label{nonlinearprofile}
Assume that $\{u_{n}\}_{n}$ admits  a profile decomposition with profiles
 $\left(\phi_{j};\{x_{j,n},\xi_{j,n},\lambda_{j,n},t_{j,n}\}_{n}\right)_{j}$ as in Proposition \ref{profiledecomposition}. Let us consider a sequence of  Cauchy problems for a sequence of initial time $\{t_{n}\}$,
\begin{equation}\label{temp}
\begin{cases}
i\partial_{t}v_{n}(t,x)+\Delta v_{n}(t,x)+|v_{n}|^{4}v_{n}(t,x)=0,\\
v_{n}(t_{n},x)=u_{n}.
\end{cases}
\end{equation}
For each fixed j, let $\Phi_{j}(t,x)$ be the associated  nonlinear profile to $\phi_{j}$.  Let $$\Phi_{j,n}:=\frac{1}{\lambda_{j,n}^{\frac{d}{2}}}e^{ix\xi_{j,n}}e^{-it|\xi_{j,n}|^{2}}\Phi_{j}\left(\frac{t-t_{j,n}}{\lambda_{j,n}^{2}},\frac{x-x_{j,n}-2\xi_{j,n}t}{\lambda_{j,n}}\right).$$

If for any $\{\tau_{n}\}_{n}$ such that:
\begin{equation}\label{criticalnonlinear}
\forall j>0,  \quad \Phi_{j} \text{ scatters forward or } \limn \frac{\tau_{n}-t_{j,n}}{\lambda_{j,n}^{2}}<T_{+}(\Phi_{j}),
\end{equation}
(One may extract a subsequence again so that the above limit exists),

then for n large enough,  $v_{n}$, $\Phi_{j,n}$ are defined in $[t_{n},t_{n}+\tau_{n}]$.

Moreover, let
\begin{equation}\label{keynonlineardispersive}
r_{n}^{l}:=v_{n}-\sum_{j=1}^{l}\Phi_{j,n}-e^{it_{n}\Delta}\omega_{n}^{l},
\end{equation}

then one has 
\begin{equation}
\limsup_{ l \rightarrow \infty}\limsupn \|r_{n}^{l}\|_{L^{\infty}([t_{n},t_{n}+\tau_{n}];L^{2})\cap L^{2(d+2)/d}([t_{n}, t_{n}+\tau_{n}]\times \RRR)}=0.
\end{equation}

Furthermore, \eqref{orthpara} implies that $\forall l>0$
\begin{equation}
\begin{aligned}
&\|\sum_{j=1}^{l}\Phi_{j,n}\|_{L^{2(d+2)/d}([t_{n},t_{n}+\tau_{n}]\times \RRR^{d})}^{\frac{2(d+2)}{d}}\\
\lesssim &\sum_{j=1}^{l}\|\Phi_{j}\|_{L^{2(d+2)/d}([\lim\frac{-t_{j,n}}{\lambda^{2}_{j,n}},\lim \frac{\tau_{n}-t_{j,n}}{\lambda_{j,n}}]\times \RRR^{d})}^{2(d+2)/d}+o_{n}(1)
\end{aligned}
\end{equation}
In particular, if for any $j$, $\Phi_{j}$ scatters forward, then for $n$ large enough, the associated solution to \eqref{nls} with initial data $u(t_{n})$ also scatters forward.
\end{prop}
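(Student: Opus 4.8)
The statement to be proved is Proposition~\ref{nonlinearprofile}, the nonlinear approximation lemma: given a profile decomposition of $\{u_n\}$ and a sequence of times $\{t_n\}$, one builds an approximate solution $\sum_j \Phi_{j,n} + e^{it_n\Delta}\omega_n^l$ to the perturbed equation, shows the error is small in dual Strichartz, and concludes via the stability Proposition~\ref{stability}.

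The plan is to proceed in the standard several-step fashion. First I would set up the approximate solution $\tilde u_n^l := \sum_{j=1}^l \Phi_{j,n} + e^{it_n\Delta}\omega_n^l$ and verify it is a near-solution: compute $e_n^l := i\partial_t \tilde u_n^l + \Delta \tilde u_n^l + |\tilde u_n^l|^{4/d}\tilde u_n^l$, which equals $|\tilde u_n^l|^{4/d}\tilde u_n^l - \sum_{j=1}^l |\Phi_{j,n}|^{4/d}\Phi_{j,n}$ since each $\Phi_{j,n}$ solves \eqref{nls} and $e^{it_n\Delta}\omega_n^l$ solves the linear equation. Second, I would establish a uniform Strichartz bound on $\sum_{j=1}^l \Phi_{j,n}$ on $[t_n, t_n+\tau_n]$: this is where hypothesis \eqref{criticalnonlinear} enters, guaranteeing each $\Phi_j$ is defined on the relevant (rescaled) time interval with finite Strichartz norm — for forward-scattering profiles globally, for the others because the limiting time stays strictly below $T_+(\Phi_j)$ — and then the almost-orthogonality \eqref{orthpara} of the parameters lets one sum the $L^{2(d+2)/d}$ norms almost orthogonally (the cross terms vanish by \eqref{orthstri}-type estimates applied to the nonlinear profiles, using that $e^{it\Delta}$-evolved pieces approximate the $\Phi_{j,n}$'s). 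One also needs that only finitely many profiles carry nontrivial Strichartz norm up to any $\eps$, which follows from the mass orthogonality \eqref{orthmass} plus small-data scattering (Remark~\ref{remsmalldata}): profiles with small mass are global with small Strichartz norm.

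Third, and this is the technical heart, I would estimate $\|e_n^l\|_{L^{2(d+2)/(d+4)}}$. Expanding the difference $|\sum \Phi_{j,n} + w|^{4/d}(\sum \Phi_{j,n}+w) - \sum |\Phi_{j,n}|^{4/d}\Phi_{j,n}$ where $w = e^{it_n\Delta}\omega_n^l$, one isolates (a) genuine cross terms among distinct $\Phi_{j,n}$, controlled by almost-orthogonality \eqref{orthpara} after passing to a finite truncation; (b) terms involving $w$, controlled by \eqref{smallnessofmass} which makes $\|e^{it\Delta}\omega_n^l\|_{L^{2(d+2)/d}}$ small for $l$ large; and (c) higher-order interaction terms, handled by Hölder and the uniform Strichartz bounds from step two. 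The upshot is $\limsup_{l\to\infty}\limsup_n \|e_n^l\|_{L^{2(d+2)/(d+4)}} = 0$. Fourth, I would apply Proposition~\ref{stability} with $\tilde u = \tilde u_n^l$, $u_0 = u_n$, $t_0 = t_n$ (the initial-data match is exact: $\tilde u_n^l(t_n) - u_n \to 0$ in $L^2$ by \eqref{propnonlinear} and the definition of $\omega_n^l$), obtaining for $l$ fixed large and $n$ large a genuine solution $v_n$ on $[t_n,t_n+\tau_n]$ with $\|v_n - \tilde u_n^l\|_{L^\infty L^2 \cap L^{2(d+2)/d}}$ small; setting $r_n^l = v_n - \tilde u_n^l$ gives the claimed bound, and combining with step two yields the final almost-orthogonal Strichartz estimate and the scattering conclusion.

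The main obstacle is step three, the error estimate — specifically handling the cross terms in the expansion of the nonlinearity when the number of profiles $l$ is large and fixed but $n\to\infty$. The difficulty is that $4/d$ need not be an integer (for $d=3$ it is $4/3$), so the pointwise algebraic expansion of $|a+b|^{4/d}(a+b)$ is not a finite binomial sum; one must instead use the Hölder-type inequality $\big||a+b|^{p}(a+b) - |a|^p a - |b|^p b\big| \lesssim |a|^p|b| + |b|^p|a|$ (valid for $p = 4/d \le 4$ in our dimensions, with care at the borderline) and then sum the resulting bilinear expressions against the almost-orthogonality. Making the cross terms like $\||\Phi_{j,n}|^{4/d}\Phi_{j',n}\|_{L^{2(d+2)/(d+4)}}$ genuinely $o_n(1)$ requires approximating each $\Phi_{j,n}$ in Strichartz norm by a compactly supported (in spacetime) function, on which the parameter divergence \eqref{orthpara} forces the supports apart — this is the one place where real care is needed, though it is by now a routine-but-delicate argument in the concentration-compactness literature (\cite{begout2007mass}, \cite{tao2008minimal}, \cite{merle1998compactness}), which is why the paper defers the full details to those references.
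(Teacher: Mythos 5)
Your proposal is correct and follows exactly the route the paper has in mind: the paper does not write out a proof of Proposition \ref{nonlinearprofile} but states that it is "essentially the consequence of asymptotic orthogonality \eqref{orthpara} and the classical stability theory Proposition \ref{stability}," deferring details to \cite{begout2007mass}, \cite{merle1998compactness}, \cite{tao2008minimal} — which is precisely the approximate-solution-plus-stability argument you outline, including the standard treatment of the non-integer power $4/d$ in the cross-term estimates. No discrepancy to report.
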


We remark here similar results holds for energy critcal wave and energy critical NLS, see \cite{bahouri1999high}, \cite{keraani2001defect}.

\subsection{Variational characiterization of groud state}
Let us first recall the classical Gagliardo-Nirenberg inequality.
\begin{lem}\label{lemsharpGN}
Let $v\in H^{1}$, then
\begin{equation}\label{sharpGN}
E(v)\geq \frac{1}{2}\int |\nabla v|^{2}\left[1-\left(1-\frac{\|v\|_{2}^{2}}{\|Q\|_{2}^{2}}\right)^{4/d}\right]
\end{equation}
\end{lem}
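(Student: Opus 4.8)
The statement to prove is the sharp Gagliardo--Nirenberg inequality in the form
$$
E(v)\geq \frac{1}{2}\int |\nabla v|^{2}\left[1-\left(1-\frac{\|v\|_{2}^{2}}{\|Q\|_{2}^{2}}\right)^{4/d}\right],
$$
which is an equivalent repackaging of the classical sharp constant computation of Weinstein. The plan is to start from the sharp Gagliardo--Nirenberg inequality
$$
\int |v|^{2+4/d}\,dx\leq \left(1+\tfrac{2}{d}\right)\frac{1}{\|Q\|_2^{4/d}}\left(\int|v|^2\,dx\right)^{2/d}\left(\int|\nabla v|^2\,dx\right),
$$
whose optimal constant is attained at $v=Q$ and is read off from the Pohozaev/Nehari identities for the elliptic equation \eqref{groundstate}: multiplying \eqref{groundstate} by $Q$ and by $x\cdot\nabla Q$ and integrating by parts gives the two relations $\int|\nabla Q|^2=\frac{d}{d+2}\int|Q|^{2+4/d}$ and $\int Q^2=\frac{2}{d+2}\int|Q|^{2+4/d}$ (in dimensions $d=1,2,3$ these boundary terms vanish by the exponential decay of $Q$), and these pin down the constant. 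I would cite Weinstein \cite{weinstein1983nonlinear} for the sharp constant and attainment rather than reproving it, since the paper already cites it.

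With the sharp inequality in hand, the derivation is purely algebraic. Write $K=\int|\nabla v|^2$, $N=\int|v|^{2+4/d}$, $m=\|v\|_2^2$, $m_Q=\|Q\|_2^2$. Then
$$
E(v)=\frac12 K-\frac{1}{2+\frac4d}N\geq \frac12 K-\frac{1}{2+\frac4d}\cdot\left(1+\tfrac2d\right)\frac{1}{m_Q^{2/d}}m^{2/d}K=\frac12 K\left[1-\left(\frac{m}{m_Q}\right)^{2/d}\right].
$$
So in fact one gets the cleaner bound $E(v)\geq \frac12\int|\nabla v|^2\big[1-(\|v\|_2^2/\|Q\|_2^2)^{2/d}\big]$. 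To land on the stated form, note that for $0\leq a\leq 1$ one has the elementary inequality $1-a^{2/d}\geq 1-(1-(1-a))^{4/d}$? That is not generally true, so I would instead observe that the two versions differ and the paper's version is the one actually needed; one checks $1-a^{2/d}\ge 1-(1-(1-a))^{4/d}$ fails, hence the correct reading is that $1-(1-t)^{4/d}\le$ something. Let me reconsider: with $t=1-m/m_Q\in[0,1]$ (valid since $m\le m_Q$ is not assumed, but if $m>m_Q$ the bracket is negative and the inequality is vacuous/automatic anyway after sign analysis), the claim is $1-(1-t)^{4/d}\le 1-(m/m_Q)^{2/d}=1-(1-t)^{2/d}$, i.e. $(1-t)^{2/d}\le (1-t)^{4/d}$, which holds since $0\le 1-t\le 1$ and $4/d\ge 2/d$. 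Hence the bracket in the paper's statement is $\le$ the bracket $1-(\|v\|_2^2/\|Q\|_2^2)^{2/d}$ I derived, and since $\int|\nabla v|^2\ge 0$, the paper's inequality follows from the stronger one. When $\|v\|_2>\|Q\|_2$ both brackets are negative and one must recheck the direction; in that regime the paper's bracket $1-(1-a)^{4/d}$ with $a>1$ can be complex, so the intended hypothesis is implicitly $\|v\|_2\le\|Q\|_2$, or one interprets the power on a negative base appropriately.

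The only genuine content is the sharp Gagliardo--Nirenberg constant, which is standard and attributed; the rest is bookkeeping, using the Pohozaev relations to identify the constant and the monotonicity $(1-t)^{4/d}\ge(1-t)^{2/d}$ on $[0,1]$ to pass between the two equivalent forms. I expect the main (and only) subtlety to be making the statement's range of validity precise — namely that it is meant for $\|v\|_2\le\|Q\|_2$, where the bracket is a well-defined real number in $[0,1]$ — and being careful that the inequality degrades gracefully (to something vacuous) outside that range. No compactness or PDE dynamics enter here; this is an inequality that will later be combined with conservation of mass and energy to force $\int|\nabla v|^2$ to be large when $E\le 0$ and $\|v\|_2$ is close to $\|Q\|_2$.
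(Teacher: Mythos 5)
Your derivation of the bound $E(v)\ge \frac12\int|\nabla v|^2\bigl[1-(\|v\|_2^2/\|Q\|_2^2)^{2/d}\bigr]$ from Weinstein's sharp Gagliardo--Nirenberg constant together with the Pohozaev/Nehari identities is correct, and it is the whole content of the lemma: the paper offers no proof and simply recalls this as classical, so citing \cite{weinstein1983nonlinear} is the intended route, and you should have stopped at that first display. The form printed in the statement, with $\bigl(1-\|v\|_2^2/\|Q\|_2^2\bigr)^{4/d}$ inside the bracket, is a typo for $\bigl(\|v\|_2/\|Q\|_2\bigr)^{4/d}=\bigl(\|v\|_2^2/\|Q\|_2^2\bigr)^{2/d}$: as literally written the lemma is false, since at $v=Q$ the identities you quoted give $E(Q)=0$ while the right-hand side would be $\frac12\|\nabla Q\|_2^2\cdot\bigl[1-0^{4/d}\bigr]=\frac12\|\nabla Q\|_2^2>0$. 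The way the lemma is actually used later (to obtain \eqref{keycoer} under the hypothesis $\|w\|_2<\|Q\|_2-\eta$, where one needs the bracket bounded below by a positive $c_0(\eta)$ precisely when the mass ratio is bounded away from $1$) confirms that the intended bracket is $1-(\|v\|_2^2/\|Q\|_2^2)^{2/d}$, i.e.\ exactly what you proved.

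The ``landing on the stated form'' paragraph is where your write-up contains genuine errors, in two places. First, with $t=1-m/m_Q$ the paper's bracket is $1-t^{4/d}$, not $1-(1-t)^{4/d}$; you substituted $m/m_Q$ where $1-m/m_Q$ belongs. Second, the monotonicity you invoke is backwards: for a base $x\in[0,1]$ and exponents $4/d\ge 2/d$ one has $x^{4/d}\le x^{2/d}$, hence $(1-t)^{2/d}\ge(1-t)^{4/d}$, the opposite of what you assert. These slips are symptomatic of attempting the impossible: no chain of elementary inequalities can deduce the paper's literal display from your (true) bound, because the literal display is false. The correct resolution is not more bookkeeping but to record the lemma in the form your first computation gives and flag the printed exponent/parenthesization as an error; with that emendation your proof is complete and is the standard one.
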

It gives the variational characterization of the ground state $Q$.
\begin{lem}\label{lemvarchar}
Let $v \in H^{1}$, and 
\begin{equation}
\int |v|^{2}=\int Q^{2}, \quad  E(v)=0
\end{equation}
then
\begin{equation}
v(x)=\lambda_{0}^{N/2}Q(\lambda_{0}x+x_{0})e^{i\gamma_{0}}.
\end{equation}
\end{lem}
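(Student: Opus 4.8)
The plan is to prove the variational characterization of $Q$ (Lemma \ref{lemvarchar}) by combining the sharp Gagliardo--Nirenberg inequality of Lemma \ref{lemsharpGN} with the known classification of optimizers. First I would observe that the hypotheses $\|v\|_2^2 = \|Q\|_2^2$ and $E(v) = 0$ force equality in \eqref{sharpGN}: indeed, plugging $\|v\|_2^2 = \|Q\|_2^2$ into the right-hand side of \eqref{sharpGN} makes the bracket $\left[1 - (1 - 1)^{4/d}\right] = 1$, so the inequality reads $E(v) \geq \tfrac{1}{2}\int |\nabla v|^2$; since $E(v) = 0$ and $\int |\nabla v|^2 \geq 0$, this can only happen if $\int |\nabla v|^2 = 0$ as well --- wait, that would force $v$ constant, hence (being in $H^1$) $v \equiv 0$, contradicting $\|v\|_2 = \|Q\|_2 > 0$. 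So the correct reading must be that Lemma \ref{lemsharpGN}'s bound is trivial at the threshold mass and one should instead return to the \emph{original} Gagliardo--Nirenberg inequality $\int |v|^{2+4/d} \leq C_{GN} \|\nabla v\|_2^2 \|v\|_2^{4/d}$ with sharp constant $C_{GN}$ attained exactly by $Q$ and its symmetry orbit. The condition $E(v) = 0$ with $\|v\|_2 = \|Q\|_2$ says precisely $\tfrac12 \|\nabla v\|_2^2 = \tfrac{1}{2+4/d}\|v\|_{2+4/d}^{2+4/d}$, which rearranges to equality in sharp Gagliardo--Nirenberg.

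Next I would invoke the classical theorem of Weinstein \cite{weinstein1983nonlinear} (and the uniqueness result for \eqref{groundstate}) that the set of optimizers of the sharp Gagliardo--Nirenberg inequality is exactly $\{\lambda^{d/2} Q(\lambda x + x_0) e^{i\gamma} : \lambda > 0, x_0 \in \RRR^d, \gamma \in \RRR\}$ --- possibly after also using a modulation/translation to remove a phase gradient, though since we only demand $E(v)=0$ and not momentum $P(v)=0$, the Galilean factor $e^{ix\cdot\xi_0}$ is \emph{not} needed because a Galilean boost changes $\|\nabla v\|_2$ and the energy; so only scaling, translation and phase appear, matching the statement. This immediately yields the desired form $v(x) = \lambda_0^{d/2} Q(\lambda_0 x + x_0)e^{i\gamma_0}$, with the caveat that the exponent should read $d/2$ rather than the $N/2$ printed in the statement.

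Concretely, the steps in order are: (1) rewrite $E(v) = 0$ as $\|\nabla v\|_2^2 = \tfrac{d}{d+2}\|v\|_{2+4/d}^{2+4/d}$; (2) combine with $\|v\|_2^2 = \|Q\|_2^2$ and the sharp Gagliardo--Nirenberg inequality, using the known value of $C_{GN}$ in terms of $\|Q\|_2$ and $\|\nabla Q\|_2$ (obtained from the Pohozaev/virial identities satisfied by $Q$), to deduce that $v$ saturates sharp Gagliardo--Nirenberg; (3) quote the characterization of equality cases to conclude. The one mild subtlety is step (2): one must check that the relation forced on $(\|\nabla v\|_2, \|v\|_{2+4/d})$ by $E(v)=0$ is genuinely the equality case and not merely consistent with it --- this follows because the sharp inequality, once $\|v\|_2$ is fixed, constrains the pair $(\|\nabla v\|_2^2,\ \|v\|_{2+4/d}^{2+4/d})$ to lie in a half-plane whose boundary line is exactly the locus $E(v) = 0$, so $E(v)=0$ and $\|v\|_2 = \|Q\|_2$ together pin $v$ to the equality set.

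I expect the main obstacle to be purely bookkeeping: correctly matching the normalization constants so that ``$E(v) = 0$ at threshold mass'' lines up with ``equality in sharp Gagliardo--Nirenberg,'' and making sure no spurious degrees of freedom (in particular the Galilean parameter) creep into the conclusion. Since $Q$ is real and positive, the phase $e^{i\gamma_0}$ and the absence of a Galilean factor are forced by the fact that optimizers of Gagliardo--Nirenberg, after a phase rotation and translation, can be taken real, positive and radially symmetric, hence equal to a rescaled $Q$ by the uniqueness in \eqref{groundstate}. No genuinely hard analysis is involved beyond citing Weinstein's classification.
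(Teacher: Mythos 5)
Your proof is correct and follows exactly the route the paper intends: the lemma is stated as an immediate consequence of the sharp Gagliardo--Nirenberg inequality, with the equality case classified by Weinstein's theorem and the uniqueness of $Q$, and the paper offers no further argument. You also correctly flag two typos in the source --- the exponent should be $d/2$ rather than $N/2$, and the bracket in \eqref{sharpGN} should read $1-\left(\|v\|_{2}/\|Q\|_{2}\right)^{4/d}$ so that it vanishes (rather than equals $1$) at threshold mass.
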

In \cite{merle2005blow}, Merle and Rapha\"el apply  concentration compactness type techniques to generalize the above into the following:
\begin{lem}\label{mrvar}
Let $v$ in $H^{1}$, there exists $\alpha_{0}>0$, such that for all $\alpha<\alpha_{0}$, there exists $\delta(\alpha)>0$, such that if  
\begin{equation}
\|v\|_{2}\leq \|Q\|_{2}+\alpha, \quad \quad E(v)\leq \alpha \|\nabla v\|_{2}^{2},
\end{equation}
then there exits $\lambda_{0}=\frac{\|\nabla Q\|_{2}}{\|\nabla u\|_{2}}, x_{0}\in\RRR^{d}, \gamma_{0}\in \RRR$ such that
\begin{equation}
\|\frac{1}{\lambda^{d/2}_{0}}u(\frac{x-x_{0}}{\lambda_{0}})e^{i\gamma_{0}}-Q\|_{H^{1}}\leq \delta(\alpha).
\end{equation}
and, $\lim_{\alpha\rightarrow 0}\delta(\alpha)=0.$
\end{lem}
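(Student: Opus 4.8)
\textbf{Proof proposal for Lemma \ref{mrvar}.}

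The plan is to argue by contradiction using a concentration-compactness / profile-decomposition argument, reducing to the rigidity statement of Lemma \ref{lemvarchar}. Suppose the conclusion fails: then there exists a sequence $v_n \in H^1$ with $\|v_n\|_2 \le \|Q\|_2 + \alpha_n$, $\alpha_n \to 0$, and $E(v_n) \le \alpha_n \|\nabla v_n\|_2^2$, but such that, writing $\lambda_{0,n} = \|\nabla Q\|_2 / \|\nabla v_n\|_2$, the rescaled functions $w_n(x) := \lambda_{0,n}^{d/2} v_n(\lambda_{0,n} x)$ stay bounded away (in $H^1$, modulo translation and phase) from the orbit of $Q$, i.e. $\inf_{x_0,\gamma_0}\|e^{i\gamma_0} w_n(\cdot - x_0) - Q\|_{H^1} \ge \delta_0 > 0$. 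By the choice of $\lambda_{0,n}$ we have the normalization $\|\nabla w_n\|_2 = \|\nabla Q\|_2$ for every $n$, and by the scaling invariance of mass and of the ratio $E/\|\nabla\cdot\|_2^2$ we still have $\|w_n\|_2 \le \|Q\|_2 + \alpha_n$ and $E(w_n) \le \alpha_n \|\nabla Q\|_2^2 \to 0$.

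First I would extract the limiting mass: up to a subsequence $\|w_n\|_2^2 \to m^2$ for some $m \le \|Q\|_2$. Now I apply the sharp Gagliardo-Nirenberg inequality in the form of Lemma \ref{lemsharpGN}: since $E(w_n) \to 0$ while $\|\nabla w_n\|_2 = \|\nabla Q\|_2$ is fixed and positive, passing to the limit in \eqref{sharpGN} forces $1 - (1 - m^2/\|Q\|_2^2)^{4/d} \le 0$, hence $m^2 \ge \|Q\|_2^2$, and therefore $m^2 = \|Q\|_2^2$; that is, $\|w_n\|_2 \to \|Q\|_2$, so no mass escapes to infinity in the sense of the $L^2$-norm. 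Next I would invoke a profile decomposition for the bounded $H^1$ sequence $\{w_n\}$ (the standard Gagliardo-Nirenberg / Brézis-Lieb type decomposition, as in the works cited around Proposition \ref{profiledecomposition}, but in the $H^1$-setting of Merle-Raphaël), writing $w_n = \sum_{j=1}^{\ell} \tau_{x_{j,n}} (\lambda_{j,n}^{-d/2} \phi_j(\cdot/\lambda_{j,n})) e^{i\gamma_{j,n}} + r_n^\ell$ with asymptotic orthogonality of parameters and asymptotic decoupling of mass, of $\|\nabla\cdot\|_2^2$, and of the $L^{2+4/d}$-norm. The energy is \emph{not} exactly additive, but the combination of the decoupling of $\|\nabla\cdot\|_2^2$ and of $\|\cdot\|_{2+4/d}^{2+4/d}$ gives $\liminf_n E(w_n) \ge \sum_j E_\infty(\phi_j)$ up to the usual Brézis-Lieb error, where each profile is rescaled to unit gradient; combined with $E(w_n)\to 0$ and the sharp Gagliardo-Nirenberg lower bound applied to each profile, this forces all but one profile to vanish, and the surviving profile $\phi_1$ to carry the full mass $\|Q\|_2$, full gradient $\|\nabla Q\|_2$, and energy $E_\infty(\phi_1) = 0$, with $r_n^\ell \to 0$ strongly in $H^1$. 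Then Lemma \ref{lemvarchar} applied to $\phi_1$ identifies it with a modulate of $Q$, so (undoing the single scaling, which is forced to be $1$ by the gradient normalization) $w_n$ converges in $H^1$, modulo translation and phase, to $Q$ — contradicting $\delta_0 > 0$. Finally, a standard $\eps$-$\delta$ (i.e. compactness) reformulation of this contradiction argument yields the quantitative statement with an explicit modulus $\delta(\alpha) \to 0$, and the formula $\lambda_0 = \|\nabla Q\|_2 / \|\nabla u\|_2$ is exactly the gradient normalization used above.

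The main obstacle I expect is bookkeeping the energy through the profile decomposition: unlike mass and the Dirichlet norm, the energy $E$ is a nonlinear functional and is only \emph{sub}-additive along the decomposition (via Brézis-Lieb), so one has to be careful that the sign works in the right direction — one wants $E(w_n) \ge \sum_j E_\infty(\phi_j) - o(1)$ so that $E(w_n)\to 0$ together with $E_\infty(\phi_j) \ge 0$ (the latter itself coming from sharp Gagliardo-Nirenberg plus $\|\phi_j\|_2 \le \|Q\|_2$) pins every profile's energy to zero. A secondary subtlety is ensuring that the gradient normalization $\|\nabla w_n\|_2 = \|\nabla Q\|_2$ really forces the scaling parameter of the single surviving profile to converge to $1$ (so that no residual scaling degree of freedom survives in the final statement); this follows from the decoupling of the Dirichlet norms once all other profiles and the remainder are known to vanish. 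Everything else — the local compactness/concentration step, and the translation to the quantitative $\delta(\alpha)$ form — is routine.
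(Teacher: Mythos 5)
Your proposal is essentially correct and follows the same route as the source the paper relies on: the paper does not prove Lemma \ref{mrvar} itself but cites Merle--Rapha\"el \cite{merle2005blow} (see also \cite{hmidi2005blowup}), whose argument is exactly the contradiction/concentration-compactness scheme you describe — normalize the gradient, use sharp Gagliardo--Nirenberg to pin the mass at $\|Q\|_{2}$, decompose, use mass/gradient/$L^{2+4/d}$ decoupling to isolate a single zero-energy profile of mass $\|Q\|_{2}$, and conclude via Lemma \ref{lemvarchar}. The only cosmetic remark is that for a bounded $H^{1}$ sequence the decomposition carries translations only (no scaling parameters $\lambda_{j,n}$), which in fact removes the bookkeeping subtleties you worry about, and the two points you flag (energy subadditivity with the right sign, and the surviving scale being forced to $1$ by gradient decoupling once the remainder's $\dot H^{1}$ norm is killed by the energy identity) close up exactly as you indicate.
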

\section{The dynamic of non-positive energy solution}
Throughout this section, we assume the solution $u$ to \eqref{nls} has $H^{1}$ initial data and satisfies \eqref{supercritical}.

The dynamics of non-positive energy solutions are  extensively studied in the series work of Merle and Rapha\"el, \cite{merle2005blow}, \cite{merle2003sharp} , \cite{merle2006sharp}, \cite{merle2004universality}, \cite{raphael2005stability}, \cite{merle2005profiles}. We will apply their results  in this work. However,  we will work  from an $L^{2}$ based viewpoint rather than $H^{1}$ viewpoint.

Merle and Rapha\"el show all strictly negative energy solutions blows up according to the so-called log-log dynamic, we restate their results as the following,

\begin{thm}\label{mr1}
Assume $u$ is a solution to \eqref{nls} with $H^{1}$ initial data, non positive energy, and satisfying assumption \eqref{supercritical}, assume further $\|u\|_{2} \neq \|Q\|_{2}$ if $u$ is of zero energy,    then $u$ blows up in finite time according to the so-called log-log law
\begin{equation}\label{loglog}
u(t,x)=\frac{1}{\lambda^{d/2}(t)}(Q+\epsilon)(\frac{x-x(t)}{\lambda})e^{i\gamma(t)}, x(t)\in \RRR^{d}, \gamma(t)\in \RRR, \lambda(t)\in \RRR^{+},
\|\epsilon\|_{H^{1}}\leq \delta(\alpha)
\end{equation}
with estimate
\begin{equation}\label{rate}
\lambda(t)\sim \sqrt{\frac{T-t}{\ln|\ln T-t}|}.
\end{equation}
\begin{equation}\label{remainder}
\lim_{t\rightarrow T} \int ( |\nabla \epsilon(t,x)|^{2}+\epsilon(t,x)|^{2}e^{-|x|})=0.
\end{equation}
\end{thm}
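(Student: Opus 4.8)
The plan is to reduce the hypotheses to those of Merle and Rapha\"el and quote their log--log blow--up theory \cite{merle2003sharp,merle2004universality,raphael2005stability,merle2005profiles,merle2005blow}, rather than to prove anything new. First I would fix $\alpha$ to be at most the threshold $\alpha^{*}$ appearing in those works, so that \eqref{supercritical} places $u_{0}$ in the regime they treat. If $E(u_{0})<0$ and $\|Q\|_{2}<\|u_{0}\|_{2}<\|Q\|_{2}+\alpha$, the conclusion is exactly their result: the virial identity $\tfrac{d^{2}}{dt^{2}}\int|x|^{2}|u|^{2}=16E_{0}<0$ (in localized form when the variance is infinite) first gives finite--time blow--up in the spirit of \cite{glassey1977blowing}, then the modulation analysis around $Q$ produces the geometric decomposition \eqref{loglog}, the almost--monotonicity of $\lambda(t)$ yields the rate \eqref{rate}, and the sharp control of the radiation term gives \eqref{remainder}.

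It remains to treat $E(u_{0})=0$. Here I would first use the sharp Gagliardo--Nirenberg inequality \eqref{sharpGN}: when $\|u_{0}\|_{2}<\|Q\|_{2}$ the coefficient $\tfrac{1}{2}\bigl[1-(1-\|u_{0}\|_{2}^{2}/\|Q\|_{2}^{2})^{4/d}\bigr]$ is strictly positive, so $E(u_{0})=0$ forces $\nabla u_{0}\equiv 0$, hence $u_{0}\equiv 0$; a nontrivial zero--energy solution therefore has $\|u_{0}\|_{2}\geq\|Q\|_{2}$, and with \eqref{supercritical} and the hypothesis $\|u_{0}\|_{2}\neq\|Q\|_{2}$ we are left with $\|Q\|_{2}<\|u_{0}\|_{2}<\|Q\|_{2}+\alpha$. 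The borderline $E_{0}=0$, $\|u_{0}\|_{2}=\|Q\|_{2}$ genuinely must be excluded: by Lemma \ref{lemvarchar} such $u_{0}$ is a symmetry image of $Q$, whose solution is the solitary wave --- global, not blowing up. For the remaining case, $u$ does not scatter in either time direction (a nontrivial $H^{1}$ solution scattering forward would satisfy $E_{0}=\tfrac{1}{2}\|\nabla u_{+}\|_{2}^{2}>0$), and the Merle--Rapha\"el trapping analysis excludes a global non--scattering solution with mass this close to $\|Q\|_{2}$ and $E_{0}\leq 0$; hence $u$ blows up in finite time, and the description of the first paragraph applies. Equivalently one may reduce to the negative--energy case by the perturbation $u_{0}\mapsto\mu u_{0}$ with $\mu\downarrow 1$, which has mass in $(\|Q\|_{2},\|Q\|_{2}+\alpha)$ and energy $\tfrac{\mu^{2}}{2}\|\nabla u_{0}\|_{2}^{2}(1-\mu^{4/d})<0$, and then appeal to the stability of the log--log regime \cite{raphael2005stability}; or simply observe that \cite{merle2004universality,raphael2005stability} already state the result for $E_{0}\leq 0$ throughout this mass range.

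I do not expect to reprove anything: the substance --- the modulation theory near $Q$, the Lyapunov functional controlling $\lambda(t)$, and the dispersive estimates behind \eqref{remainder} --- sits entirely inside the cited papers. The only point needing care in the assembly is the borderline energy $E_{0}=0$, where the virial identity degenerates ($\tfrac{d^{2}}{dt^{2}}\int|x|^{2}|u|^{2}=16E_{0}=0$) and no longer forces finite--time blow--up by itself, so one must lean on the Merle--Rapha\"el trapping mechanism, whose validity rests on the mass being only slightly above $\|Q\|_{2}$ together with $E_{0}\leq 0$. I would also remark, as in those works, that no finite--variance or radiality assumption is needed for \eqref{loglog}--\eqref{remainder}, since the argument runs on localized virial functionals throughout.
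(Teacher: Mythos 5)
Your proposal is correct and follows essentially the same route as the paper, which likewise does not reprove anything but quotes the Merle--Rapha\"el log--log theory (strictly negative energy from \cite{merle2005blow}, \cite{merle2003sharp}, \cite{merle2006sharp}; zero energy from Theorem 3 of \cite{merle2006sharp} and Theorem 4, Proposition 5 of \cite{merle2004universality}; and \eqref{remainder} from the formula above (3.7) on p.~52 of \cite{merle2006sharp}, after noting $\tilde{Q}_{b}\rightarrow Q$ as $b\rightarrow 0$). Your accounting for the excluded borderline $E=0$, $\|u_{0}\|_{2}=\|Q\|_{2}$ via Lemma \ref{lemvarchar} is right; just note that your alternative reduction $u_{0}\mapsto \mu u_{0}$ plus openness of the log--log regime would not by itself yield blow--up of $u_{0}$ (openness of the regime says nothing about limits of data in it), so the direct citation is the one to rely on.
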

\begin{rem}
We do not directly  use \eqref{rate}.   Our results rely  on the fact such solution will blow up in finite time and   the mechanism of  blow up is ejecting mass out of the singular point, i.e. the control \eqref{remainder}.
\end{rem}

One may refer to \cite{merle2005blow},\cite{merle2003sharp}, \cite{merle2006sharp} for a full proof of Theorem \ref{mr1} when the solution is of strictly negative energy.  When the solution if of zero energy, one may refer to Theorem 3 in \cite{merle2006sharp}, see also  Theorem 4 and  Proposition 5 in \cite{merle2004universality}. For estimate \eqref{remainder}, which is most relevant to our work, one may refer to the formula above (3.7) in page 52 of \cite{merle2006sharp}. Strictly speaking, the term appears in \cite{merle2006sharp} is $\tilde{Q}_{b}$ rather than the ground state $Q$, but $\tilde{Q}_{b}$ is  just small modification of $Q$, and converges to $Q$ in a strong way as $b\rightarrow 0$, see Proposition 1 in \cite{merle2006sharp}. And $b\rightarrow 0$ as $t\rightarrow T$, see, again, the formula above (3.7) in page 52 of \cite{merle2006sharp}.

Now, for the purpose of our work, we write a corollary of Theorem \ref{mr1}.
\begin{cor}\label{mr1cor}
Assume  $u$ is a solution to \eqref{nls} with $H^{1}$ initial data, nonpositive  energy, and satisfying assumption \eqref{supercritical}, assume further $\|u\|_{2} \neq \|Q\|_{2}$ if $u$ is of zero energy, 
 then there exists $\delta=\delta(u)>0$, such that $\forall A>1$, there exists $T_{1}<T^{+}(u)$, $x_{1}\in \RRR^{d}, l_{1}>0$, such that
 \begin{equation}
 \int_{|x-x_{1}|\leq l_{1}} |u(T_{1},x)|^{2}\geq \delta, \quad  \int_{|x-x_{1}|\geq Al_{1}} |u(T_{1},x)|^{2}\geq \delta
 \end{equation}
\end{cor}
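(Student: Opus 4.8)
The plan is to extract the desired conclusion from the log-log description \eqref{loglog}--\eqref{remainder} together with the fact that the solution blows up in finite time. Fix $u$ as in the hypotheses, let $T=T^{+}(u)<\infty$, and recall that by Theorem \ref{mr1} we have the decomposition $u(t,x)=\lambda(t)^{-d/2}(Q+\epsilon)((x-x(t))/\lambda(t))e^{i\gamma(t)}$ with $\lambda(t)\to 0$ as $t\to T$, $\|\epsilon(t)\|_{H^{1}}\le\delta(\alpha)$, and the crucial ejection control \eqref{remainder}: $\int(|\nabla\epsilon|^{2}+|\epsilon|^{2}e^{-|x|})\to 0$ as $t\to T$. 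The number $\delta=\delta(u)$ should be taken as a small fraction of $\|Q\|_{2}^{2}$, say $\delta=\tfrac14\|Q\|_{2}^{2}$ (any fixed quantity strictly below $\|Q\|_2^2$ and below $\|u_0\|_2^2-\|Q\|_2^2$ would do — note $\|u_0\|_2 > \|Q\|_2$ strictly once we are not in the minimal-mass case, which is excluded when $E=0$; when $E<0$ this strict inequality is automatic).

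The concentration-at-the-core estimate is the easy half. Since $Q$ is smooth and decays exponentially, choose $R$ so large that $\int_{|y|\le R}Q^{2}\ge \|Q\|_{2}^{2}-\tfrac18\|Q\|_2^2$. On the region $|y|\le R$ the weight $e^{-|y|}$ is bounded below by $e^{-R}$, so \eqref{remainder} forces $\int_{|y|\le R}|\epsilon(t,y)|^{2}\to 0$ as $t\to T$; hence for $t$ close enough to $T$, $\int_{|y|\le R}|(Q+\epsilon)(t,y)|^{2}\ge \|Q\|_2^2 - \tfrac14\|Q\|_2^2 = 3\delta > \delta$. Undoing the scaling, this says $\int_{|x-x(t)|\le R\lambda(t)}|u(t,x)|^{2}\ge\delta$. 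So we will take $x_{1}=x(T_{1})$ and $l_{1}=R\lambda(T_{1})$ for a time $T_{1}$ to be pinned down.

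The mass-ejection estimate is where the argument has content. Fix $A>1$. We must find $T_{1}$ (close to $T$) with $\int_{|x-x(T_1)|\ge A R\lambda(T_1)}|u(T_1,x)|^{2}\ge\delta$, i.e. in rescaled variables $\int_{|y|\ge AR}|(Q+\epsilon)(T_1,y)|^{2}\ge\delta$. Now $\int_{|y|\ge AR}Q^{2}$ is tiny (exponential tail) and $\|\epsilon\|_{2}\le\delta(\alpha)$ only gives a small bound, so the point is that the \emph{core} mass $\int_{|y|\le AR}|(Q+\epsilon)|^{2}$ cannot be close to the total mass $M(u_0)$: otherwise we would be close to a function of mass $M(u_0)$ supported essentially in a ball, but the inner portion is, by the previous paragraph plus the smallness of $\epsilon$ on $\{|y|\le AR\}$ (again from \eqref{remainder}, with the weight bounded below by $e^{-AR}$), within $\delta(\alpha)$ of a translate/rescale of a function with $\int_{|y|\le AR}|(Q+\epsilon)|^2$ essentially equal to $\|Q\|_2^2$. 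More precisely: by \eqref{remainder}, $\int_{|y|\le AR}|\epsilon(t,y)|^2\to 0$, so $\int_{|y|\le AR}|(Q+\epsilon)(t,y)|^{2}\to \int_{|y|\le AR}Q^{2}\le \|Q\|_{2}^{2}$ as $t\to T$. By mass conservation, $\int_{|y|\ge AR}|(Q+\epsilon)(t,y)|^{2} = M(u_0) - \int_{|y|\le AR}|(Q+\epsilon)(t,y)|^2 \to M(u_0)-\int_{|y|\le AR}Q^2 \ge M(u_0)-\|Q\|_2^2 = \|u_0\|_2^2 - \|Q\|_2^2 > 0$, and this limit is a fixed positive number independent of $A$. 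Choosing $\delta$ also $\le\tfrac12(\|u_0\|_2^2-\|Q\|_2^2)$ and then $T_{1}<T$ close enough (depending on $A$), we get $\int_{|y|\ge AR}|(Q+\epsilon)(T_1,y)|^{2}\ge\delta$, which after undoing the scaling is the claimed outer bound with the same $x_1=x(T_1)$, $l_1=R\lambda(T_1)$.

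The main obstacle is purely bookkeeping: making sure a single $\delta=\delta(u)$ (in particular independent of $A$) works for both inequalities and that the same center $x_1$ and radius $l_1$ serve both; this is exactly why we peel off mass $\approx\|u_0\|_2^2-\|Q\|_2^2$ at infinity, a quantity that does not shrink as $A\to\infty$, while the inner weight $e^{-|y|}\ge e^{-AR}$ still lets \eqref{remainder} kill $\epsilon$ on any fixed ball $\{|y|\le AR\}$. One should also note the cutoffs $\{|x-x_1|\le l_1\}$ and $\{|x-x_1|\ge A l_1\}$ are disjoint, consistent with $2\delta\le M(u_0)$, which holds for $\alpha$ small. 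No compactness or profile decomposition is needed here — Theorem \ref{mr1}, and especially the ejection control \eqref{remainder}, already contains everything.
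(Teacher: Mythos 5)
Your proposal is correct and follows essentially the same route as the paper: both arguments rest on the strict mass excess $\|u_0\|_2^2-\|Q\|_2^2>0$ (forced by the sharp Gagliardo--Nirenberg inequality when $E<0$ and by hypothesis when $E=0$), the exponential decay of $Q$, and the ejection control \eqref{remainder} to show that the core at scale $\lambda(t)$ retains mass $\approx\|Q\|_2^2$ while the excess mass is pushed beyond $A\lambda(t)$ as $t\to T^{+}$. The only cosmetic difference is that you obtain the outer bound by complementation via mass conservation, whereas the paper estimates $\int_{|x|\ge A}|\epsilon|^2$ directly; the content is identical.
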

See proof in Section \ref{sectioncor}.

\section{An overview for the proof for Theorem \ref{main1}, Theorem\ref{main2}}
We give an overview for the proof of Theorem \ref{main1} , Theorem \ref{main2}. We mainly focus on the proof the Theorem \ref{main1}. Indeed, Theorem \ref{main2} follows from Theorem \ref{main1} due to the following classical fact:
\begin{lem}
Let $H$ be an Hilbert space, if $v_{n}\rightharpoonup v_{0}$ and $\|v_{n}\|_{H}=\|v\|_{H}$, then
\begin{equation}
v_{n}\rightarrow v_{0}.
\end{equation}
\end{lem} 
\subsection{Introduction}
Ever since the work of Kenig and Merle, \cite{kenig2006global}, \cite{kenig2008global}, there is a road map to approach results of type  Theorem \ref{main1}, which includes three ingredients:
\begin{enumerate}
\item concentration compactness theorems
\item variational characterization of ground state
\item rigidity theorems
\end{enumerate}
Concentration compactness relies on the study of the \textbf{linear} operator $e^{it\Delta}$ and remedies the lack of compactness of classical Strichartz estimate caused by the symmetry of the system.

 The concentration compactness will help us reduce the study of the original problem to the study of so-called almost periodic solution, i.e. solution of the form
\begin{equation}\label{almost}
\begin{aligned}
&u(t,x)=\frac{1}{\lambda^{d/2}(t)}P_{t}(\frac{x-x(t)}{\lambda(t)})e^{ix\xi(t)}, \lambda(t)>0, x(t),\xi(t)\in \RRR^{d},\\
& \{P_{t}\}_{t} \text{ precompact familily in } L^{2}(\RRR^{d}).
\end{aligned}
\end{equation}
Such strategy is a also called Liouville Theorem in the literature, see for example, \cite{merle2004universality}.

Variational characterization will help us understand  why ground state $Q$ is special. Thus , help us see the profile $Q$ in the study of \eqref{almost}.

Rigidity theorem will tell us the so called almost periodic solution is special, and one may expect a powerful enough rigidity theorem should fully characterize solutions to \eqref{nls} of type \eqref{almost}, though we cannot achieve this in this  work.
\subsection{Step 1: First extraction of profile}
First, we will use the profile decomposition and a minimization argument in \cite{dkmnew} to show the following:
\begin{lem}\label{firstextraction}
Let $u$ be a solution, not necessarily radial, satisfying  the assumption of Theorem \ref{main1}, then there exists a sequence $t_{n}\rightarrow T^{+}(u)$, such that $u(t_{n})$ admits a profile decomposition with profiles $\{\phi_{j}, \{x_{j,n},\lambda_{j,n},\xi_{j,n},t_{j,n}\}_{n}\}_{j}$, and there is a  unique compact profile, we assume it is  $\phi_{1}$, such that 
\begin{itemize}
\item $\|\phi_{1}\|_{2}\geq \|Q\|_{2}$,
\item The associated nonlinear profile $\Phi_{1}$, is an almost periodic solution in the sense of \eqref{almost}, and it does not scatter forward nor scatter backward.
\end{itemize}
\end{lem}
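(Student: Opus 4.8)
The plan is to run the standard concentration-compactness minimization scheme of Kenig--Merle type, in the variant developed in \cite{dkmnew}, adapted to the $L^2$-critical setting with mass constraint \eqref{supercritical}. First I would fix the solution $u$ which does not scatter forward, pick any sequence $s_n \to T^{+}(u)$, and apply the profile decomposition (Proposition \ref{profiledecomposition}) to the bounded sequence $u(s_n)$ in $L^2$; after extracting a subsequence and adjusting parameters we get profiles $\phi_j$ with parameters $\{x_{j,n},\xi_{j,n},\lambda_{j,n},t_{j,n}\}_n$, each of which (by our normalization) is a compact, forward-scattering, or backward-scattering profile. The key dichotomy, driven by Proposition \ref{nonlinearprofile}: if for \emph{every} $j$ the nonlinear profile $\Phi_j$ scatters forward, then for $n$ large the solution with data $u(s_n)$ scatters forward, hence (by uniqueness and time-translation) $u$ itself scatters forward --- contradiction. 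So at least one nonlinear profile, say the one realizing the obstruction, fails to scatter forward. One then runs the minimization: among all times $t_n \to T^{+}$ and all resulting profile decompositions, one seeks to make the \emph{total} Strichartz size of the profiles as small as possible, which forces a \emph{single} non-scattering profile to carry all the ``bad'' behavior, and moreover forces the other profiles to be negligible in the nonlinear evolution up to $T^+$. This is exactly the extraction-of-the-critical-element argument; the output is a single profile $\phi_1$ whose nonlinear profile $\Phi_1$ neither scatters forward nor backward, and such that the associated evolution is (pre)compact modulo the symmetry group $G$, i.e. almost periodic in the sense of \eqref{almost}.

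Next I would show $\phi_1$ must be a compact profile and that $\|\phi_1\|_2 \ge \|Q\|_2$. For the first point: if $\Phi_1$ were a forward-scattering profile it would scatter forward (contradicting what we just arranged); if it were a backward-scattering profile, then running the evolution forward from $t_n$ it is a small perturbation of a linear solution plus the remainder, and combined with \eqref{smallnessofmass} and the stability Proposition \ref{stability} one again gets forward scattering of $u$ --- so $t_{1,n}\equiv 0$, a compact profile. For the mass bound: by asymptotic orthogonality of mass \eqref{orthmass}, $\|\phi_1\|_2^2 \le \limsup_n \|u(s_n)\|_2^2 = \|u_0\|_2^2 \le \|Q\|_2^2 + \delta(\alpha)$, which is close to but not obviously above $\|Q\|_2^2$. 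The point must be that \emph{if} $\|\phi_1\|_2 < \|Q\|_2$, then by Dodson's theorem (Theorem \ref{dodson}) the nonlinear profile $\Phi_1$ is global and scatters in both time directions --- contradicting that $\Phi_1$ does not scatter forward. Hence $\|\phi_1\|_2 \ge \|Q\|_2$. Uniqueness of the compact profile with this property: any second compact profile $\phi_2$ would also have mass $\ge \|Q\|_2$ (by the same Dodson dichotomy, since it too must be non-scattering or else be discarded in the minimization), but then $\|\phi_1\|_2^2 + \|\phi_2\|_2^2 \ge 2\|Q\|_2^2 > \|Q\|_2^2 + \delta(\alpha)$ for $\alpha$ small, contradicting \eqref{orthmass} and \eqref{supercritical}. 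So there is exactly one compact profile, relabel it $\phi_1$; the remaining profiles all have mass strictly below $\|Q\|_2$, hence their nonlinear profiles scatter, and the minimization forces them to be harmless.

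Finally, almost-periodicity of $\Phi_1$: once we know $\Phi_1$ is the unique surviving non-scattering object and the profile decomposition of $u(s_n)$ collapses (after the minimization) essentially to $g_{1,n}\phi_1 + $ negligible, the orbit $\{\Phi_1(t)/\text{(scaling, translation, phase, Galilean)}\}$ must be precompact in $L^2$ on its maximal forward interval: any sequence of times along which it failed to be compact would yield, via another profile decomposition, a further splitting into $\ge 2$ non-trivial nonlinear profiles, each again subject to the mass dichotomy, contradicting minimality/uniqueness. This gives the form \eqref{almost} with $x(t),\xi(t),\lambda(t)$ the modulation parameters. The main obstacle I anticipate is the bookkeeping in the minimization argument --- setting up the right ordered quantity to minimize (over both the choice of $t_n\to T^+$ and the profile decomposition), proving it is attained in a suitable sense, and extracting from its minimality both the uniqueness of the non-scattering profile and its almost-periodicity --- rather than any single estimate; all the analytic inputs (Strichartz, stability, Dodson, the profile calculus) are already in hand. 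A secondary subtlety is keeping careful track of the Galilean parameter $\xi_{j,n}$ throughout, since it is present in $G$ but plays no role in the final radial application.
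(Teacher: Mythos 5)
Your overall architecture (profile decomposition, Dodson dichotomy for the mass threshold, orthogonality of mass for uniqueness, compactness of the profile by ruling out forward/backward scattering profiles, and a DKM-style minimization to get almost periodicity) matches the paper's. The mass lower bound, the uniqueness of the large profile, and the argument that $\phi_1$ is a compact profile are all essentially the paper's arguments. However, there is a genuine gap in the last and most delicate step, the almost periodicity of $\Phi_1$, and it traces back to your choice of minimization functional.

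You propose to minimize ``the total Strichartz size of the profiles.'' This is not a well-defined finite quantity here (the nonlinear profile $\Phi_1$ does not scatter, so its Strichartz norm is infinite, and the linear Strichartz norms of the small profiles are not what controls anything), and more importantly it is not the functional whose minimality yields compactness. The paper minimizes $\EEE(s)=\|\phi_1\|_2$, the mass of the unique large profile, over all admissible time sequences $s=\{\tilde t_n\}$, and uses the double profile decomposition (Lemma \ref{double}) both to show the infimum is attained and to transfer a decomposition of $\Phi_1(a_n)$ back into a decomposition of $u(t_n+\lambda_{1,n}^2 a_n)$. Your argument that failure of compactness of the orbit of $\Phi_1$ ``would yield a further splitting into $\geq 2$ non-trivial nonlinear profiles, \ldots contradicting minimality/uniqueness'' does not go through: if $\Phi_1$ fails to be almost periodic along some $a_n$, the decomposition of $\Phi_1(a_n)$ can perfectly well contain exactly one profile of mass $\geq\|Q\|_2$ (hence no violation of uniqueness), with the missing mass leaking into many small scattering profiles or into the dispersive remainder $\omega_n^l$. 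The only thing that is forced is that the mass of that single large profile is \emph{strictly smaller} than $\|\Phi_1\|_2$, and the contradiction comes from this strict decrease against the minimality of $\EEE(s_0)$ --- which is exactly why the mass of the large profile, and not a Strichartz quantity, must be the object minimized. You flag the attainment of the infimum as ``the main obstacle,'' but without identifying the correct functional and invoking the double profile decomposition to realize its minimum and to propagate decompositions through the nonlinear flow, the almost-periodicity conclusion is not reached. (A minor secondary omission: the claim that $\Phi_1$ scatters neither forward nor backward is not addressed; in the paper this is part of what the minimization/compactness packages, since a scattering direction would again produce a time sequence along which the large-profile mass drops.)
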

See Section \ref{sectionfirstextraction} for a proof.
\begin{rem}
One may compare this step to the procedure of reduction to the minimal blow up solution in the study of defocusing problem.
\end{rem}
\begin{rem}
Due to the assumption  \eqref{supercritical}, there cannot be  more than one profile with mass no less than $\|Q\|_{2}^{2}$.
\end{rem}
\subsection{Step 2: Second extraction of Profile}
We need to do some further modification of profile, the following step is very standard when one wants to prove scattering type results. By arguing exactly as Section 4 of \cite{tao2008minimal}, we will have
\begin{lem}\label{secondextraction}
Let $\Phi_{1}$ be the nonlinear profile as in Lemma \ref{firstextraction}, with lifespan $(T^{-},T^{+})$. Then, according to Lemma \ref{firstextraction}, 
\begin{equation}\label{almostsecond}
\Phi_{1}(t,x)=\frac{1}{\lambda_{1}^{d/2}(t)}P_{t}(\frac{x-x_{1}(t)}{\lambda_{1}(t)})e^{ix\xi_{1}(t)}.
\end{equation}
Further more, there exists $\{t_{n}\}_{n}, t_{n}\in (T^{-},T^{+})$, such that
\begin{equation}\label{sc}
P_{t_{n}}\xrightarrow{n\rightarrow \infty} P_{0}  \text{ in } L^{2}
\end{equation}
And  the solution $w$ to \eqref{nls} with initial data $P_{0}$ or $\bar{P_{0}}$ satisfies
\begin{equation}\label{minimalcouter}
w(t,x)=N^{d/2}(t)L_{t}(N(t)(x-x(t)))e^{ix\xi(t)}. t\geq 0, N(t)\leq 1
\end{equation}
And $\{L_{t}\}_{t}$ is a precompact $L^{2}$ family.
\end{lem}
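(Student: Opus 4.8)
\textbf{Proof plan for Lemma \ref{secondextraction}.}

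The plan is to run the standard ``second minimization'' that upgrades an almost periodic nonlinear profile to one which is maximally concentrated at a good time, so that the new parameters become comparable and monotone. First I would recall from Lemma \ref{firstextraction} that $\Phi_1$ is almost periodic in the sense of \eqref{almost}, so that $\{P_t\}_t$ ranges in a precompact subset of $L^2$; in particular, along any sequence $t_n\in(T^-,T^+)$ one may pass to a subsequence so that $P_{t_n}\to P_0$ strongly in $L^2$, which already gives \eqref{sc}. The substance is to choose $\{t_n\}$ well. Following Section~4 of \cite{tao2008minimal}, I would consider the scale function $\lambda_1(t)$ (or equivalently the ``frequency scale'' $N_1(t)=1/\lambda_1(t)$, which for an almost periodic solution is only defined up to the compactness modulus) and use the local constancy / no-rapid-cascade properties that accompany almost periodicity: on any interval the spatial and frequency centers $x_1(t),\xi_1(t)$ cannot escape too fast relative to $\lambda_1(t)$, and $\lambda_1$ itself is locally constant up to a fixed multiplicative factor. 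The point of the argument is to locate times where $N_1(t)$ attains (within a factor) its infimum over a forward ray, or more precisely to pass to the pseudo-conformal/scaling-normalized picture in which one may assume $N(t)\le 1$ for $t\ge 0$.

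Concretely, I would argue as follows. Since $\Phi_1$ does not scatter forward, by the nonlinear approximation Proposition \ref{nonlinearprofile} (applied to the trivial profile decomposition of $P_{t_n}$, i.e. the Kenig--Merle type rigidity reduction as in Step~1) one cannot have $N_1(t)\to\infty$ along the whole forward lifespan without forcing a contradiction with precompactness; likewise the hypothesis that $\Phi_1$ does not scatter backward is used to handle the other end. Hence, after translating in time and rescaling by the scaling symmetry (4) — which preserves the class \eqref{almost} and the precompact family, only relabelling $P_t$, $\lambda_1$, $x_1$, $\xi_1$ — one may select $t_n\to$ some limiting time and a scaling so that the normalized solution $w$, defined as the solution to \eqref{nls} with data $P_0:=\lim P_{t_n}$ (or its conjugate $\bar P_0$, the conjugate being allowed because it only reverses time and we are free to choose the forward or backward direction after the pseudo-conformal inversion), has the form \eqref{minimalcouter} with frequency scale $N(t)\le 1$ for all $t\ge 0$; writing $L_t$ for the corresponding profiles, precompactness of $\{L_t\}_t$ in $L^2$ is inherited from precompactness of $\{P_t\}_t$ because the two families differ only by the group action of $G$, which is unitary on $L^2$ (Remark \ref{unitary}). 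The strong convergence \eqref{sc} then holds by construction, since $P_{t_n}$ was chosen to be a convergent subsequence of the precompact family.

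The main obstacle is the bookkeeping of the parameter $N(t)$: one must show that the normalization ``$N(t)\le 1$ for $t\ge 0$'' can actually be achieved, i.e. that the forward infimum of $N_1$ along the lifespan is either attained (up to a harmless factor) at a finite time — in which case one translates there — or is approached, in which case one extracts $t_n$ along a minimizing sequence and rescales; ruling out the degenerate alternative ($N_1\to 0$ too fast, or $N_1$ not bounded below in a way that breaks precompactness) is exactly where the ``does not scatter forward/backward'' hypothesis and the local-constancy estimates for almost periodic solutions enter, and this is imported essentially verbatim from \cite{tao2008minimal}. The remaining steps — checking that $w$ is again almost periodic with parameters $N(t),x(t),\xi(t)$, that the profile family stays precompact, and that \eqref{sc} holds — are routine consequences of the unitarity of $G$ and the stability theory Proposition \ref{stability}, so I would not belabor them.
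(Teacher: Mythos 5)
Your proposal takes essentially the same route as the paper, which itself gives no proof of this lemma beyond the one-line remark that it follows ``by arguing exactly as Section 4 of \cite{tao2008minimal}''; your sketch is a faithful outline of that standard reduction (extract a convergent subsequence from the precompact family $\{P_t\}_t$, normalize the frequency scale so that $N(t)\le 1$ forward in time, use complex conjugation to reverse time when the good direction is backward, and inherit precompactness of $\{L_t\}_t$ from unitarity of $G$), deferring the same technical bookkeeping to the same reference. The only assertion I would drop is the claim that $N_1(t)\to\infty$ along the forward lifespan would contradict precompactness --- it would not (finite-time blowup does exactly this) --- but this is a side remark that does not affect your argument, whose substance is correctly located in Section 4 of \cite{tao2008minimal}.
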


Indeed,such a solution $w$, sometimes also called minimal blow up solution, already partially falls into the framework of Dodson's work \cite{dodson2012global}, \cite{dodson2015global}.

\subsection{Step 3: Fast Cascade case}
We  exclude the so-called fast cascade, i.e.  the case
\begin{equation}\label{fastcascade}
\int_{0}^{\infty}N^{3}(t)<\infty.
\end{equation}

In this regime, for $d=3$, Dodson's long time Strichartz estimate, Theorem 1.24 in \cite{dodson2012global} will indeed imply $w$ is not only a $L^{2}$ solution, but an $H^{1}$ solution, see Theorem 3.13 in \cite{dodson2012global}, and furthermore, the energy is zero, see (3.86), (3.87) and Remark 3.14 in \cite{dodson2012global}.  Long time Strichartz estimate also holds for $d=1,2$,  with extra technical difficulty, see for \cite{dodson2016global}, \cite{dodson2467global}. See Theorem 1.9 in \cite{dodson2015global} for a summary.
  
Thus, we have
\begin{lem}[Dodson]\label{extraregular}
Consider $w$ as in Lemma \ref{secondextraction}, assume further \eqref{fastcascade}, then $w(0)\in H^{1}$ and $E(w)=0$.
\end{lem}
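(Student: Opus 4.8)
\textbf{Proof proposal for Lemma \ref{extraregular}.}

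The plan is to invoke Dodson's long time Strichartz machinery and run the standard ``additional regularity'' argument for almost periodic solutions in the fast cascade regime. First I would recall that, by Lemma \ref{secondextraction}, $w$ is an almost periodic solution of the form \eqref{minimalcouter} with frequency scale $N(t)\le 1$, mass exactly $\|Q\|_2$ (inherited from $\|\phi_1\|_2\ge\|Q\|_2$ together with \eqref{supercritical} forcing equality in the relevant minimization), and with frequency localization $\xi(t)$. I would first reduce to the case $\xi(t)\equiv 0$: since $w$ is radial one can use the symmetry and the compactness of $\{L_t\}$ to show the Galilean parameter stays bounded and can be normalized away (alternatively quote that for the minimal-mass almost periodic solutions one may assume $\xi\equiv 0$, as in \cite{dodson2015global}). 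Then the hypothesis \eqref{fastcascade}, $\int_0^\infty N^3(t)\,dt<\infty$, is exactly the quantity controlled by the long time Strichartz estimate.

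Next I would apply Dodson's long time Strichartz estimate (Theorem 1.24 in \cite{dodson2012global} for $d=3$, and the analogues \cite{dodson2016global},\cite{dodson2467global} for $d=1,2$, summarized in Theorem 1.9 of \cite{dodson2015global}) to the high-frequency part $P_{>N}w$. The estimate gives, roughly, $\|P_{>N}w\|_{L^2_{t,x}}$-type bounds that decay in $N$ with a rate governed by $\int N^3$; under \eqref{fastcascade} this rate is strong enough that summing over dyadic frequencies shows $\sup_t \|\,|\nabla|\, P_{>1} w(t)\|_{2}<\infty$, hence $w(t)\in \dot H^1$ uniformly, and since $\|w(t)\|_2=\|Q\|_2$ this upgrades to $w(0)\in H^1$ with uniform-in-time $H^1$ bound. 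This is precisely the content of Theorem 3.13 in \cite{dodson2012global}, which I would simply cite, indicating that the $d=1,2$ cases follow identically from the corresponding long time Strichartz estimates.

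Finally, for the energy I would argue that $E(w)=0$. Having established $w\in H^1$ uniformly, the energy $E(w(t))\equiv E(w)$ is a well-defined conserved quantity. Because $N(t)\le 1$ along the flow and $\int_0^\infty N^3(t)\,dt<\infty$, there is a sequence $t_n\to\infty$ with $N(t_n)\to 0$; scaling out $N(t_n)$ in \eqref{minimalcouter} and using the compactness of $\{L_t\}$ together with the uniform $\dot H^1$ bound, one finds $\|\nabla w(t_n)\|_2 \sim N(t_n)\|\nabla L_{t_n}\|_2 \to 0$ while the potential term is likewise $O(N(t_n)^{?})\to 0$; hence $E(w)=\lim_n E(w(t_n))=0$. (This is the computation carried out around (3.86)--(3.87) and Remark 3.14 in \cite{dodson2012global}, which I would cite.) The main obstacle is the first step: properly quoting and setting up the long time Strichartz estimate, in particular keeping track of the Galilean/translation parameters and verifying the hypotheses of \cite{dodson2012global} in the present $L^2$, possibly nonzero-$\xi$ setting — once those are in place, the regularity gain and the vanishing of the energy are essentially bookkeeping. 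Since the lemma is attributed to Dodson and the excerpt explicitly points to Theorem 3.13, (3.86), (3.87) and Remark 3.14 of \cite{dodson2012global}, I would keep the proof short, stating the reduction to $\xi\equiv 0$ and then citing these results directly.
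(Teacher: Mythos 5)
Your proposal is correct and follows essentially the same route as the paper, which likewise establishes this lemma by direct citation of Dodson's long time Strichartz machinery (Theorem 1.24 and Theorem 3.13, together with (3.86)--(3.87) and Remark 3.14 of \cite{dodson2012global}, plus the $d=1,2$ analogues in \cite{dodson2016global}, \cite{dodson2467global}). One minor inaccuracy worth flagging: Lemma \ref{secondextraction} only gives $\|Q\|_{2}\leq\|w\|_{2}\leq\|Q\|_{2}+\alpha$ rather than $\|w\|_{2}=\|Q\|_{2}$ exactly (the case $\|w\|_{2}>\|Q\|_{2}$ is treated separately in Lemma \ref{nofastcascade}), but this does not affect the argument since the long time Strichartz estimate and the resulting regularity gain are purely analytic and independent of that mass constraint.
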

We then have
\begin{lem}\label{nofastcascade}
Consider $w$ as in Lemma \ref{secondextraction}, \eqref{fastcascade} cannot hold.
\end{lem}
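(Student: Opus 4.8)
\textbf{Proof proposal for Lemma \ref{nofastcascade}.}

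The plan is to argue by contradiction: assume the fast cascade \eqref{fastcascade} holds for the solution $w$ of Lemma \ref{secondextraction}. By Lemma \ref{extraregular} we may then upgrade $w$ to an $H^1$ solution with $E(w)=0$. Recall that $w$ has the almost-periodic form \eqref{minimalcouter} with $N(t)\le 1$ and $\{L_t\}_t$ precompact in $L^2$, that it does not scatter forward (it inherits non-scattering from $\Phi_1$ through Lemma \ref{secondextraction}), and that $\|w\|_2$ equals the mass of either $P_0$ or $\bar P_0$, which by construction is $\ge\|Q\|_2$; combined with assumption \eqref{supercritical} on the original data and the mass-orthogonality \eqref{orthmass}, in fact $\|w\|_2 = \|\phi_1\|_2 \in [\|Q\|_2, \|Q\|_2+\alpha]$.

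First I would exploit $E(w)=0$ together with the sharp Gagliardo–Nirenberg inequality, Lemma \ref{lemsharpGN}: since $E(w)=0$ and $\|w\|_2\le\|Q\|_2+\alpha$, the inequality \eqref{sharpGN} forces $\|\nabla w(t)\|_{L^2}^2$ to be comparable (with constants $\delta(\alpha)$) to what it would be for a rescaled $Q$, and in particular $\|\nabla w(t)\|_2 \sim N(t)$ up to $\delta(\alpha)$ corrections — this is just the standard consequence that $E=0$, mass near $\|Q\|_2$ implies proximity to the $Q$-manifold, i.e. Lemma \ref{mrvar} applies with $\alpha$ in place of the hypothesis $E(v)\le\alpha\|\nabla v\|_2^2$ (here $E=0$). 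So $w(t)$ is $\delta(\alpha)$-close in $H^1$, after rescaling by $\lambda\sim 1/N(t)$ and a translation and phase, to $Q$. Next I would use the fast-cascade hypothesis $\int_0^\infty N^3(t)\,dt<\infty$: since $N(t)\le 1$ this is an $L^2$-type smallness of $N$ on long time scales, and by the standard argument (as in the treatment of fast cascades in Kenig–Merle type schemes and in Dodson's work) one can extract a sequence $t_n\to\infty$ along which $N(t_n)\to 0$. Indeed $\int_0^\infty N^3<\infty$ with $N$ not integrable-to-a-positive-limit forces $\liminf_{t\to\infty} N(t)=0$; precompactness of $\{L_t\}$ and the $H^1$ bound $\|\nabla w(t)\|_2\sim N(t)$ then give $\|\nabla w(t_n)\|_2\to 0$.

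The heart of the contradiction is then a conservation-law squeeze. Along $t_n$ we have $\|\nabla w(t_n)\|_2 \to 0$ while $\|w(t_n)\|_2 = \|w\|_2 \ge \|Q\|_2$ is fixed, and $E(w)=0$. Plug into the sharp Gagliardo–Nirenberg inequality \eqref{sharpGN}: $0 = E(w(t_n)) \ge \tfrac12\|\nabla w(t_n)\|_2^2\big[1-(1-\|w\|_2^2/\|Q\|_2^2)^{4/d}\big]$. If $\|w\|_2 > \|Q\|_2$ the bracket is strictly positive, giving $0 \ge$ (positive multiple of $\|\nabla w(t_n)\|_2^2$) — consistent, so this alone is not quite a contradiction; instead I would run the complementary estimate: $0 = E(w) = \tfrac12\|\nabla w(t_n)\|_2^2 - \tfrac{1}{2+4/d}\|w(t_n)\|_{2+4/d}^{2+4/d}$, so $\|w(t_n)\|_{2+4/d}^{2+4/d} = \tfrac{2+4/d}{2}\|\nabla w(t_n)\|_2^2 \to 0$, which together with $\|\nabla w(t_n)\|_2\to 0$ and interpolation (Gagliardo–Nirenberg the other way, or simply that $\|w(t_n)\|_2$ is controlled by $\|w(t_n)\|_{2+4/d}$ and $\|\nabla w(t_n)\|_2$ up to the compactness) forces $\|w(t_n)\|_2\to 0$, contradicting $\|w(t_n)\|_2\ge\|Q\|_2>0$. (Equivalently and more cleanly: if $\|w\|_2=\|Q\|_2$ exactly, then $E=0$ plus $\|\nabla w(t_n)\|_2\to0$ contradicts the strict positivity of mass via the variational characterization Lemma \ref{lemvarchar}, since a genuine minimizer cannot have vanishing gradient; if $\|w\|_2>\|Q\|_2$, use that $E$ is negative for functions close to the $Q$-manifold at that mass — a standard fact from \eqref{sharpGN} once $\|v\|_2>\|Q\|_2$ and $v$ is concentrated — contradicting $E(w)=0$.)

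The step I expect to be the main obstacle is making the passage "$\int_0^\infty N^3<\infty$ $\Rightarrow$ $N(t_n)\to 0$ along a subsequence, with quantitative control tying $N(t)$ to $\|\nabla w(t)\|_2$" fully rigorous, because it requires combining the long-time Strichartz machinery of Dodson (which underlies Lemma \ref{extraregular} and controls $\|\nabla w\|$ in terms of $\int N^3$) with the almost-periodicity and the $E=0$ normalization; one must be careful that the $H^1$ norm really does collapse and not merely that $N$ has a small $\liminf$. A clean way around this is to observe that $\int_0^\infty N^3 < \infty$ together with $E(w)=0$ and Dodson's long-time Strichartz estimate actually yields $\|\nabla w\|_{L^\infty_t L^2_x} < \infty$ and in fact $\|\nabla w(t)\|_2 \to 0$ as $t\to\infty$ (this is essentially (3.86)–(3.87) and Remark 3.14 of \cite{dodson2012global}), after which the Gagliardo–Nirenberg squeeze above closes the argument immediately; so the real content is citing the Dodson estimates in exactly the form that gives decay of the gradient, and the rest is the elementary variational contradiction.
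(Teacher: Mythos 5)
Your opening move is the same as the paper's (invoke Lemma \ref{extraregular} to get $w(0)\in H^{1}$ and $E(w)=0$), but the contradiction you then run has a genuine gap, and your fallback for the supercritical-mass case rests on a false claim. The ``conservation-law squeeze'' does not close: from $E(w)=0$, $\|\nabla w(t_{n})\|_{2}\to 0$ and hence $\|w(t_{n})\|_{2+4/d}\to 0$, you cannot conclude $\|w(t_{n})\|_{2}\to 0$. The rescaled ground states $\lambda_{n}^{-d/2}Q(x/\lambda_{n})$ with $\lambda_{n}\to\infty$ satisfy all three of these conditions while keeping $\|\cdot\|_{2}=\|Q\|_{2}$ fixed; there is no interpolation bounding the \emph{lower} Lebesgue norm $L^{2}$ by $L^{2+4/d}$ and $\dot H^{1}$, and using the concentration at scale $1/N(t_{n})$ via H\"older only returns an $O(1)$ bound on the mass, not smallness. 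Your parenthetical alternative for $\|w\|_{2}>\|Q\|_{2}$ --- that $E$ must be negative for concentrated functions near the $Q$-manifold at that mass --- is also not true: since $E(Q)=0$ (Pohozaev), functions $H^{1}$-close to a rescaled $Q$ with mass slightly above $\|Q\|_{2}$ can have energy of either sign, so there is no static variational obstruction to $E(w)=0$ at supercritical mass.

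The missing idea is that the case $\|Q\|_{2}<\|w\|_{2}\le\|Q\|_{2}+\alpha$ is killed \emph{dynamically}, not variationally: Theorem \ref{mr1} (Merle--Rapha\"el) says an $H^{1}$ solution with zero energy and mass strictly above $\|Q\|_{2}$ (within the range \eqref{supercritical}) must blow up in finite time by the log-log law, which contradicts the forward globality of $w$ built into \eqref{minimalcouter}. For the threshold case $\|w\|_{2}=\|Q\|_{2}$, the clean argument is the one you gesture at but do not complete: Lemma \ref{lemvarchar} forces $w(0)$ to be exactly a rescaled, translated, phased $Q$, hence $w$ is the standing wave, hence $N(t)\sim 1$ and $\int_{0}^{\infty}N^{3}(t)\,dt=\infty$, contradicting \eqref{fastcascade} directly. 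Your stated contradiction there (``a genuine minimizer cannot have vanishing gradient'') is not valid, since a family of rescaled minimizers can have gradient norm tending to zero; the point is rather that a standing wave has constant $N(t)$, so the fast-cascade integral diverges.
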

\begin{proof}
The case $\|w\|_{2}=\|Q\|_{2}$ is impossible since by Lemma \ref{extraregular}, $w$ is in $H^{1}$ and with zero energy, thus, by Lemma \ref{lemvarchar}, $w=\frac{1}{\lambda_{0}^{d/2}}Q(x-x_{0}/\lambda_{0})^{e^{i\gamma}}$ and the solution is just a standing wave which implies $N(t)\sim 1$ and $\int_{0}^{\infty} N(t)^{3}=\infty.$
The case $\|Q\|<\|u\|_{2}<\|Q\|_{2}+\alpha$ is impossible because by Theorem \ref{mr1}, such solution must blow up in finite time.
\end{proof}

\subsection{Step 4: Quasisoliton case}\label{subsectionqua}
It is in this step that we need radial assumption . Since we are considering radial solution, then $w$ in \eqref{minimalcouter} must also be radial, which imply that $x(t), \xi(t)\equiv 0$.
\begin{rem}
Since all we need is  $x(t), \quad \xi(t)\equiv 0$,   we may just assume $u$ is symmetric across $d$ linear independent planes. The observation that  if $u$ is symmetric across $d$ linear independent planes then  $x(t), \xi(t)\equiv 0$ has been pointed out by Dodson \cite{dodson2012global}.
\end{rem}
Now, we are left with the case $\int_{0}^{\infty} N^{3}(t)=\infty$, which is usually called Quasoliton case in the literature. We will show in this case,  it must be that $\|w(0)\|_{2}=\|Q\|_{2}$.
\begin{lem}\label{quasuper}
It is impossible that $w$ is of  form \eqref{minimalcouter}, $x(t),\xi(t)\equiv 0$, $\|Q\|_{2}<\|w(0)\|_{2}\leq \|Q\|_{2}+\alpha$, and $\int_{0}^{\infty} N^{3}(t)=\infty$.
\end{lem}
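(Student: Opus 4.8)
\textbf{Proof proposal for Lemma \ref{quasuper}.}

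The plan is to argue by contradiction, assuming such a solution $w$ exists, and to exploit the mass above the ground state together with the quasisoliton hypothesis $\int_0^\infty N^3(t)\,dt=\infty$ to derive a quantitative statement that contradicts the variational structure. The starting point is a local (truncated) virial / Morawetz computation adapted to the radial setting. Pick a smooth cutoff $\phi_R$ with $\phi_R(x)=|x|^2$ for $|x|\le R$, suitably truncated for $|x|\ge 2R$, and consider the localized variance $\mathcal{V}_R(t):=\int \phi_R(x)|w(t,x)|^2\,dx$. Its second derivative in time produces, modulo error terms supported in the annulus $\{R\le |x|\le 2R\}$, the quantity $8E(w)$ minus a multiple of $\int|\nabla w|^2\bigl[1-(1-\|w\|_2^2/\|Q\|_2^2)^{4/d}\bigr]$-type term; more precisely one gets $\mathcal{V}_R''(t)\lesssim 16E(w) + (\text{error}_R)$, and since $\|w\|_2>\|Q\|_2$, the sharp Gagliardo--Nirenberg inequality (Lemma \ref{lemsharpGN}) no longer forces $E(w)\ge 0$ — in fact one wants to show $E(w)<0$, which would immediately contradict the global existence encoded in \eqref{minimalcouter} (a finite-energy negative-energy solution must blow up in finite time by Theorem \ref{mr1}, whereas $w$ is defined for all $t\ge 0$).

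So the heart of the matter is to show $w\in H^1$ with $E(w)\le 0$. For the $H^1$-regularity one again invokes Dodson's long-time Strichartz estimates exactly as in Lemma \ref{extraregular}, but now one must check that the quasisoliton hypothesis (rather than the fast-cascade hypothesis) still suffices to run that argument; this is where the condition $N(t)\le 1$ and the almost-periodicity of $\{L_t\}_t$ get used, together with a frequency-localized virial identity. Concretely, I would run the argument of Dodson (Theorem 1.9 in \cite{dodson2015global}, cf. the discussion after Lemma \ref{secondextraction}) to first obtain additional regularity and a bound on $\int|\nabla w(t)|^2$, then feed this into the truncated virial identity: integrating $\mathcal V_R''$ over a long time interval $[0,T]$, the left side is controlled by $\mathcal V_R'(T)+\mathcal V_R'(0) \lesssim \sqrt{\mathcal V_R(T)}\,\|\nabla w\|_{L^\infty_t L^2}\lesssim R$, while the right side, using that $\int_0^T N^3(t)\,dt\to\infty$ forces the solution to spend a lot of "mass-time" concentrated at scale $\gtrsim N(t)^{-1}$, produces a contribution that, if $E(w)>0$, grows faster than linearly in a way that can only be absorbed if the concentration scale stays bounded — pinning down $\|w\|_2=\|Q\|_2$. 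The cleanest route is probably: show $E(w)\le 0$; combine with $\|w\|_2>\|Q\|_2$ and Lemma \ref{mrvar} (with $v=w(t)$, noting $E(w)\le 0\le\alpha\|\nabla w(t)\|_2^2$) to get that $w(t)$ is $\delta(\alpha)$-close in $H^1$ to a modulated $Q$ for every $t$; then such a solution is, up to the modulation parameters, a perturbation of the soliton with strictly supercritical mass, and Theorem \ref{mr1} (log-log blow up) says it blows up in finite time, contradicting that $w$ is global.

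The main obstacle I expect is the $H^1$-upgrade step under the quasisoliton assumption: unlike the fast-cascade case where $\int_0^\infty N^3<\infty$ gives a clean summable bound, here one only has $N(t)\le 1$ and must extract regularity from the long-time Strichartz machinery in a regime where the naive bounds diverge. One typically circumvents this by working on finite time intervals $I_k$ on which $\int_{I_k}N^3(t)\,dt\sim 1$, proving uniform-in-$k$ bounds on $\|\nabla P_{<N} w\|$ for the relevant frequency cutoffs, and then using the virial identity \emph{on each $I_k$ separately}, summing the resulting inequalities; the divergence of $\sum_k 1 = \int_0^\infty N^3(t)\,dt/(\text{const})$ is precisely what forces a sign on the energy. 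A secondary technical point is controlling the virial error terms in the annulus $\{R\le|x|\le 2R\}$: here radiality and the precompactness of $\{L_t\}_t$ give that, after rescaling by $N(t)$, the mass outside $|x|\ge \eta^{-1}$ is $<\eps$ uniformly in $t$, so choosing $R$ large (depending on $\eps$ and on the range of $N(t)^{-1}$ over the interval) makes those errors negligible — this is exactly the place, and the only place, where the radial hypothesis is essential, matching Remark \ref{fr}.
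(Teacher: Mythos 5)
Your endgame---show $w(0)\in H^{1}$ with $E(w)\le 0$, then invoke Theorem \ref{mr1} to force finite-time blow up and contradict the forward globality encoded in $N(t)\le 1$---has a genuine gap at exactly the step you flag as ``the main obstacle'': the $H^{1}$ upgrade is simply not available in the quasisoliton regime. Dodson's long-time Strichartz machinery yields $w(0)\in H^{1}$ and $E(w)=0$ only under the fast-cascade hypothesis $\int_{0}^{\infty}N^{3}(t)\,dt<\infty$ (Lemma \ref{extraregular}); when $\int_{0}^{\infty}N^{3}=\infty$ the solution $w$ need not have finite energy at all, and your proposed fix (working on intervals $I_{k}$ with $\int_{I_{k}}N^{3}\sim 1$ and summing frequency-localized virial inequalities) only produces statements about space- and frequency-truncated versions of $w(t_{n})$ with truncation parameters $K_{n}\to\infty$; these do not converge to $w$ in $H^{1}$, so no sign condition on $E(w)$ itself ever emerges. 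What that interval/truncation argument actually gives is precisely Lemma \ref{seeyou}: there are times $t_{n}$ and cutoffs $R_{n},K_{n}$ with $E\bigl(\chi(x/R_{n})P_{\le CK_{n}}w(t_{n})\bigr)\le \frac{1}{n}\|\nabla(\cdots)\|_{2}^{2}$. Getting from there to a contradiction is the real content of the paper's proof and is missing from your proposal.

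The paper's route is: rescale the truncations by $N(t_{n})$, use precompactness of $\{L_{t}\}$ plus Lemma \ref{mrvar} to extract a strong $L^{2}$ limit $f_{0}$ which \emph{is} in $H^{1}$, satisfies $E(f_{0})\le\liminf E(f_{n})\le 0$ (Fatou for $\dot H^{1}$, interpolation for $L^{4/d+2}$), and has $\|Q\|_{2}<\|f_{0}\|_{2}<\|Q\|_{2}+\alpha$. Crucially, the solution $F$ with data $f_{0}$ is \emph{not} $w$---it only approximates a rescaling of $w$ on finite time intervals via the stability theory---so the fact that $F$ blows up in finite time by Theorem \ref{mr1} does \emph{not} by itself contradict the globality of $w$; this is a second conflation in your writeup. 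The contradiction the paper actually derives is spatial: the log-log blow up of $F$ ejects a fixed amount of mass to distance $A l_{0}$ from the concentration point for arbitrarily large $A$ (Corollary \ref{mr1cor}), and transferring this back to $w$ at a nearby time via Proposition \ref{stability} violates the no-mass-decoupling rigidity of almost periodic solutions (Lemma \ref{aprigid}). So you need two additional ingredients beyond your sketch: the passage to the $H^{1}$ limit profile $f_{0}$ in place of $w$ itself, and the mass-ejection-versus-almost-periodicity argument in place of ``negative energy contradicts globality.''
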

And we will further show 
\begin{lem}\label{quathresh}
Assume $w$ is  of form \eqref{minimalcouter}, $x(t),\xi(t)\equiv 0$, $\|w(0)\|_{2}=\|Q\|_{2}$, and $\int_{0}^{\infty} N^{3}(t)=\infty$, then exist a sequence $t_{n}$, and parameters $\lambda_{n},  \gamma_{n}$ such that
\begin{equation}\label{strongconvergence}
\lim_{n\rightarrow \infty}\|\lambda_{n}^{d/2}w(t_{n}, \lambda_{n}x)e^{i\gamma_{n}}-Q\|_{2}=0.
\end{equation}
\end{lem}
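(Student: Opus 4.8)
\textbf{Proof proposal for Lemma \ref{quathresh}.}

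The plan is to run the standard compactness/rigidity machinery in the quasisoliton regime $\int_0^\infty N^3(t)\,dt=\infty$, where the $L^2$-critical solution $w$ of minimal mass necessarily has extra regularity and vanishing energy, and then invoke the variational characterization of $Q$. Concretely, in the quasisoliton case Dodson's long-time Strichartz estimates (the $d=1,2,3$ versions, as in Theorem 1.9 of \cite{dodson2015global} and Theorem 3.13 together with (3.86)--(3.87) of \cite{dodson2012global}) upgrade $w(0)$ to $H^1$ and force $E(w)=0$; since $x(t)\equiv\xi(t)\equiv 0$ this is exactly the situation covered there. Once $w(t)\in H^1$ with $E(w(t))=E(w)=0$ and $\|w(t)\|_2=\|Q\|_2$ for all $t$, the first step is to produce a time sequence along which $\|\nabla w(t_n)\|_2$ does not vanish: indeed if $\|\nabla w(t_n)\|_2\to 0$ along every sequence then by Gagliardo--Nirenberg the nonlinear term would also vanish and $w$ would scatter (or be trivial), contradicting that $w$ does not scatter forward; so there is a sequence $t_n\to T^+$ with $\|\nabla w(t_n)\|_2\gtrsim 1$.

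Next I would exploit almost periodicity. Because $w$ is of the form \eqref{minimalcouter} with $x(t)=\xi(t)=0$ and $\{L_t\}_t$ precompact in $L^2$, along a subsequence $N(t_n)^{d/2}L_{t_n}(N(t_n)\,\cdot)$ converges in $L^2$; equivalently, setting $\lambda_n:=1/N(t_n)$ (up to the harmless rescaling built into \eqref{minimalcouter}) and absorbing the modulation phase into a parameter $\gamma_n$, the rescaled functions $\lambda_n^{d/2}w(t_n,\lambda_n x)e^{i\gamma_n}$ converge in $L^2$ to some limit $v_0$ with $\|v_0\|_2=\|Q\|_2$. The point is now to identify $v_0=Q$. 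One route: the $H^1$ bound on $w$ together with the scaling $\lambda_n\sim 1/N(t_n)$ and $N(t)\le 1$ gives that the rescaled data is in fact bounded in $\dot H^1$ (here one uses that in the quasisoliton case $N(t)$ stays bounded away from the degenerate regime, or more precisely that the compactness modulus controls the $\dot H^1$ norm at the scale $N(t)$), so $v_0\in H^1$; by scaling invariance of mass and energy and weak lower semicontinuity, $\|v_0\|_2=\|Q\|_2$ and $E(v_0)\le 0$, hence $E(v_0)=0$ by the sharp Gagliardo--Nirenberg inequality \eqref{sharpGN} (which forbids negative energy at mass $\|Q\|_2$). Then Lemma \ref{lemvarchar} gives $v_0(x)=\mu_0^{d/2}Q(\mu_0 x+x_0)e^{i\beta_0}$; since $w$ is radial, $v_0$ is radial, forcing $x_0=0$, and after relabeling $\lambda_n\mapsto\lambda_n/\mu_0$ and $\gamma_n\mapsto\gamma_n+\beta_0$ we obtain exactly \eqref{strongconvergence}. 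Finally, to upgrade weak convergence to strong convergence one just notes the masses match and applies the Hilbert space fact stated just above Section 4.1; in fact here the convergence is strong from the start thanks to the $L^2$-precompactness of $\{L_t\}_t$.

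The main obstacle I expect is not the soft compactness argument but securing the two inputs that make it work: (i) the promotion of $w$ to an $H^1$ solution with $E(w)=0$ in the quasisoliton regime, which is exactly where Dodson's long-time Strichartz estimates enter and where the $d=1,2$ cases carry real technical weight (one must cite \cite{dodson2016global}, \cite{dodson2467global}); and (ii) controlling the $\dot H^1$ size of the rescaled data \emph{uniformly along the sequence} so that the $L^2$ weak limit is genuinely an $H^1$ function with the right energy, rather than losing mass or gradient to the scaling. Once those are in hand, the identification of the limit with $Q$ via \eqref{sharpGN} and Lemma \ref{lemvarchar} is routine, and the radial assumption is used only to kill the translation parameter $x_0$ in the variational characterization.
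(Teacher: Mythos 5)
There is a genuine gap, and it sits exactly where you placed your ``input (i)''. The promotion of $w$ to an $H^{1}$ solution with $E(w)=0$ via long-time Strichartz estimates is the content of Lemma \ref{extraregular}, and that lemma requires the \emph{fast cascade} hypothesis $\int_{0}^{\infty}N^{3}(t)\,dt<\infty$. Lemma \ref{quathresh} assumes the opposite, $\int_{0}^{\infty}N^{3}(t)\,dt=\infty$, so this regularity upgrade is simply not available in the quasisoliton regime: Dodson's long-time Strichartz estimate only controls $\|P_{>CK}w\|$ on windows $[0,T_{K}]$ with $\int_{0}^{T_{K}}N^{3}=K$, and the passage to global $H^{1}$ regularity and zero energy uses precisely that $K$ stays bounded. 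Without that step you have no source for $E(w)\le 0$, no uniform $\dot H^{1}$ bound on the rescaled data, and hence no way to run the limit argument identifying $v_{0}$ via Lemma \ref{lemsharpGN} and Lemma \ref{lemvarchar}; the rest of your argument is built on this missing foundation.

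What the paper does instead is substitute a \emph{truncated, asymptotic} version of ``zero energy'': Lemma \ref{seeyou}. One runs Dodson's frequency-localized Virial/Morawetz argument and observes that, since $\|w\|_{2}\geq\|Q\|_{2}$ prevents the argument from concluding $w\equiv 0$, the coercivity input \eqref{coingeneral} must fail along some sequence. This produces times $t_{n}$ and cutoff parameters $R_{n}\gg 1/N(t_{n})$, $K_{n}$ such that $\tilde w_{n}:=\chi(x/R_{n})P_{\leq CK_{n}}w(t_{n})$ satisfies $E(\tilde w_{n})\leq \frac{1}{n}\|\nabla\tilde w_{n}\|_{2}^{2}$. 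These truncated functions are automatically in $H^{1}$, so Lemma \ref{mrvar} applies to them directly (with the space-translation parameter killed by radiality), and almost periodicity of $w$ gives $\|w(t_{n})-\tilde w_{n}\|_{2}\to 0$, yielding \eqref{strongconvergence}. If you want to salvage your write-up, replace the claimed $H^{1}$/zero-energy upgrade by this contradiction argument; your final variational step is then essentially the paper's, except that the paper invokes the quantitative Lemma \ref{mrvar} rather than passing to a limit and using Lemma \ref{lemvarchar}.
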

\begin{rem}
It is very natural to conjecture that the under the same assumption of Lemma \ref{quathresh}, $w$ is essentially standing wave $Qe^{it}$. This will be related the classification of  finite time blow up solution to \eqref{nls} with mass $\|u_{0}\|_{2}=\|Q\|_{2}$. Such solutions, if one further assume the initial data is in $H^{1}$, are completely determined by the result of Merle, \cite{merle1993determination}. At the level of $L^{2}$, it seems to be a very hard problem.
\end{rem}

To understand the proof of Lemma \ref{quasuper}, Lemma \ref{quathresh}, one needs to understand how Dodson handles the case $\|w(0)\|_{2}<\|Q\|_{2}$, \cite{dodson2015global}. We will give a rather detailed sketch of Dodson's arguments in Section \ref{quasisolitoncase}. Basically, one needs to use Virial identity to explore the decay of the solution and one needs to perform frequency cut-off to explore the coersiveness of energy. We will show the following:
\begin{lem}\label{seeyou}
Assume $w$ is  of form \eqref{minimalcouter}, $x(t),\xi(t)\equiv 0$, and $\int_{0}^{\infty} N(t)^{3}=\infty$, then
there exist  sequences $t_{n}\leq T_{n}$, $R_{n}\gg \frac{1}{N(t_{n})} $, 
$\int_{0}^{T_{n}}N^{3}(t)=K_{n}$,  such that
\begin{equation}
E\left(\chi(\frac{x}{R_{n}})P_{\leq CK_{n}}w(t_{n})\right)\leq \frac{1}{n} \left\|\nabla \chi(\frac{x}{R_{n}})P_{\leq CK_{n}}w(t_{n}))\right\|_{2}^{2},
\end{equation}
where $\chi$ is smooth bump function localized around the origin.
\end{lem}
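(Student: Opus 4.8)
The plan is to combine two ingredients: the almost-periodicity of $w$ (which forces a compactness property in $L^2$), and Dodson's long-time Strichartz estimate, which says that after frequency-truncating at a level $N$ proportional to $K_n = \int_0^{T_n} N^3(t)\,dt$, the solution $P_{\leq CK_n} w$ carries a kinetic energy that is only a small multiple of $K_n^2$ (more precisely, $\|\nabla P_{\leq CK_n} w(t_n)\|_2^2 = o(K_n^2)$ as $K_n \to \infty$), while by mass conservation the truncated mass stays bounded. Since the quasi-soliton hypothesis $\int_0^\infty N^3(t)\,dt = \infty$ lets us choose $T_n$ so that $K_n \to \infty$, I expect the truncated solution to become, in a suitable rescaled sense, nearly ``flat'' in frequency on the relevant scale, which is exactly the situation in which the Gagliardo--Nirenberg/energy functional degenerates.

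Concretely, I would proceed as follows. First, fix $n$ and use the quasi-soliton hypothesis to pick $T_n$ with $K_n := \int_0^{T_n} N^3(t)\,dt$ as large as we like; then by almost-periodicity choose $t_n \in [0, T_n]$ at which $N(t_n)$ is (comparably) smallest, so that $1/N(t_n)$ is the natural spatial scale and the bulk of the mass of $w(t_n)$ sits in $\{|x| \lesssim 1/N(t_n)\}$ up to an error that is small by precompactness of $\{L_t\}$; pick $R_n \gg 1/N(t_n)$ so that $\int_{|x|\geq R_n}|w(t_n)|^2$ is negligible. Second, invoke Dodson's long-time Strichartz estimate (Theorem 1.24 in \cite{dodson2012global}, together with \cite{dodson2016global}, \cite{dodson2467global} for $d=1,2$ and the summary in \cite{dodson2015global}) to get
\begin{equation}
\|\nabla P_{\leq CK_n} w(t_n)\|_{2}^{2} \leq \delta_n K_n^{2}, \qquad \delta_n \to 0,
\end{equation}
where the constant $C$ and the decay $\delta_n$ are exactly those produced by the long-time Strichartz machinery applied on $[0,T_n]$. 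Third, I would transfer these two facts (mass localized at scale $1/N(t_n) \ll R_n$, kinetic energy $\ll K_n^2$) through the spatial cutoff $\chi(x/R_n)$: commuting $\chi(x/R_n)$ past $P_{\leq CK_n}$ produces only lower-order errors controlled by $R_n$ and the tail of the mass, so $\|\nabla[\chi(x/R_n) P_{\leq CK_n} w(t_n)]\|_2^2 \leq 2\delta_n K_n^2 + o(1)$ while $\|\chi(x/R_n) P_{\leq CK_n} w(t_n)\|_2^2 \leq \|Q\|_2^2 + \delta(\alpha)$. Finally, I would bound the nonlinear term in the energy: by Gagliardo--Nirenberg (Lemma \ref{lemsharpGN}) and the mass bound, the potential energy is $\lesssim \|\nabla[\chi(x/R_n) P_{\leq CK_n} w(t_n)]\|_2^2$, but crucially with a small constant because the truncated solution has very low frequency content relative to its kinetic scale; chasing the constants, both $E$ and the bound on the nonlinear term are $\leq \tfrac{1}{n}\|\nabla[\chi(x/R_n) P_{\leq CK_n} w(t_n)]\|_2^2$ once $n$ (hence $K_n$) is large enough.

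The main obstacle, I expect, is making the quantitative bookkeeping in the last step honest: one must ensure that the kinetic energy of the cut-off truncated solution is genuinely comparable to $K_n^2$ from below (not just bounded above by $\delta_n K_n^2$), so that the ratio $E(\cdot)/\|\nabla \cdot\|_2^2$ is the right quantity to estimate and does not degenerate for a trivial reason (e.g. the truncated piece being essentially zero). This is where the choice of $t_n$ as a near-minimum of $N$, combined with the precompactness of $\{L_t\}_t$ (which gives a uniform lower bound on the mass concentrated at the natural scale and hence, after truncation at the much higher frequency $CK_n$, a lower bound on the surviving mass and a matching lower bound on $\|\nabla(\cdot)\|_2$ via the uncertainty principle), does the work; getting the frequency cut-off level $CK_n$ and the spatial scale $R_n$ to interact correctly with Dodson's constants is the delicate point, and it is precisely here — through the use of radial-type localization in the companion estimates — that the radial hypothesis enters, as flagged in Remark \ref{fr}.
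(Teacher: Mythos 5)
Your final step is wrong in a way that cannot be repaired: you try to conclude $E(\chi(x/R_n)P_{\leq CK_n}w(t_n))\leq \frac{1}{n}\|\nabla(\chi(x/R_n)P_{\leq CK_n}w(t_n))\|_{2}^{2}$ by bounding the potential energy \emph{from above} (``the potential energy is $\lesssim \|\nabla\cdot\|_2^2$ but with a small constant''). Since $E(v)=\frac{1}{2}\|\nabla v\|_2^2-\frac{1}{2+4/d}\|v\|_{2+4/d}^{2+4/d}$, showing the potential term is a small multiple of $\|\nabla v\|_2^2$ would give $E(v)\geq(\frac{1}{2}-\epsilon)\|\nabla v\|_2^2$, i.e.\ exactly the \emph{opposite} of the claimed inequality. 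The lemma asserts near-saturation of Gagliardo--Nirenberg: the potential energy must nearly cancel (or exceed) half the kinetic energy. No upper bound on the nonlinear term can produce this, and since $\|w\|_2\geq\|Q\|_2$, the sharp Gagliardo--Nirenberg inequality gives no usable sign information either way; this is precisely why the statement is nontrivial. A secondary issue: the long-time Strichartz estimate controls $\|\nabla I_K w\|_{L_t^2L_x^{2d/(d-2)}}\lesssim K$ as a space-time norm on $[0,T_K]$, not the pointwise-in-time kinetic energy $\|\nabla P_{\leq CK_n}w(t_n)\|_2^2$, so even the second ingredient of your plan is not what the machinery provides.

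The paper's proof runs in the opposite logical direction and is by contradiction. If the lemma fails, then for some fixed $n_0$ one has $E(\chi(x/R)P_{\leq CK}w(t))>\frac{1}{n_0}\|\nabla(\chi(x/R)P_{\leq CK}w(t))\|_2^2$ for \emph{all} admissible $t\leq T$, $R\gg 1/N(t)$; but this uniform coercivity is exactly the estimate \eqref{coingeneral}, which is the one and only place where Dodson's argument for Proposition \ref{np} uses the hypothesis $\|w\|_2<\|Q\|_2-\eta$. One then reruns the truncated virial/Morawetz argument (the $M(t)$ computation, the error bounds on $E_1,E_2,E_3$, and the long-time Strichartz input) verbatim to conclude $w\equiv 0$, contradicting $\|w\|_2\geq\|Q\|_2$. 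So the long-time Strichartz estimate enters as an input to the Morawetz bookkeeping under the negated hypothesis, not as a direct bound on $\|\nabla P_{\leq CK_n}w(t_n)\|_2$. (Also, the radial hypothesis is used only to force $x(t)\equiv\xi(t)\equiv 0$ so that the cutoff is centered at the origin; the paper explicitly avoids radial Sobolev embedding.)
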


We will use  Lemma \ref{seeyou},  Lemma \ref{lemvarchar} and Corollary \ref{mr1cor} to deduce Lemma \ref{quasuper}, Lemma \ref{quathresh}.

See Section \ref{quasisolitoncase} for the proof of Lemma \ref{seeyou}, Lemma \ref{quasuper} and Lemma \ref{quathresh}.
\subsection{Step 5: Approximation argument and conclusion of the proof}\label{finalapproximation}
To conclude the proof of Theorem \ref{main1},  we use Lemma \ref{firstextraction} to reduce the dynamics of $u$ to the unique compact profile $\phi_{1}$, and its associated solution $\Phi_{1}$. Then we use Lemma \ref{secondextraction} to reduce the dynamic of $\Phi_{1}$ to the the almost periodic solution $w$, and  we use Lemma \ref{nofastcascade},Lemma \ref{quasuper} to derive that 
one must have
\begin{equation}
\|w\|_{2}=\|Q\|_{2},\quad  \int_{0}^{\infty} N^{3}(t)=\infty.
\end{equation}
And finally, such solution is characterized by Lemma \ref{quathresh}. We show the detail in Section \ref{step5}.

\section{Proof of Corollary  \ref{mr1cor}}\label{sectioncor}
We prove Corollary \ref{mr1cor} here. Let $u$ be as in Corollary \ref{mr1cor}.

First, if $u$ is of  strictly negative energy,   by  Lemma \ref{lemvarchar}, we have
\begin{equation}\label{strictabove}
\|u\|_{2}>\|Q\|_{2}
\end{equation}
If $u$ is of zero energy, then \eqref{strictabove} is already in the assumption of Corollary \ref{mr1cor}.

Thus, we can assume
\begin{equation}\label{masscondition}
 \|u\|_{2}^{2}\equiv \|Q+\epsilon\|_{2}^{2}= \|Q\|^{2}_{2}+\delta_{0}.
 \end{equation}
 Note mass is a conservation law.
  By choosing $\alpha $ in Assumption \ref{supercritical}, $\delta_{0}\ll \int_{|x|\leq 1}|Q|^{2}dx$ We will choose the $\delta(u)$ in Corollary \ref{mr1cor} as $\frac{\delta_{0}}{2}$.

Since $Q$ is  of exponential decay, we have  that, by \eqref{remainder}, when $t$ is close to $T^{+}(u)$ enough,
\begin{equation}\label{1111}
<|Q|, |\epsilon(t,x)|>\ll \delta_{0}.
\end{equation}
By \eqref{masscondition},  we obtain
\begin{equation}\label{rmaindermasscondition}
 \int |\epsilon|^{2}\lesssim \frac{3}{4}\delta_{0}.
 \end{equation}

On the other hand, by \eqref{1111}
\begin{equation}
\int_{|x|\leq 1}{|Q+\epsilon|^{2}}\geq \delta_{0}/2.
\end{equation}

Now fix any $A>1$, using the trivial estimate
\begin{equation}
\int_{|x|\leq A} |\epsilon|^{2}|\lesssim e^{A}\int |\epsilon|^{2}e^{-|x|}
\end{equation}

By \eqref{remainder}, and  $T_{1}$ close to $T^{+}(u)$, we have 
\begin{equation}\label{2222}
\int_{|x|\leq A}|\epsilon|^{2}\ll \delta_{0}
\end{equation}

Thus combine \eqref{1111}, \eqref{2222} and \eqref{rmaindermasscondition}, we have by triangle inequality that
\begin{equation}
\int_{|x|\geq A} |Q+\epsilon|^{2}\geq \delta_{0}/2.
\end{equation}
Let $l_{0}, x_{0}$ in Lemma \ref{mr1cor} be $x(T_{1}), \lambda(T_{1})$, then the Corollary follows.
\section{Proof of Lemma \ref{firstextraction}}\label{sectionfirstextraction}
Lemma \ref{firstextraction} should be compared with the reduction to minimal mass blow up solutions for scattering type problem. Most arguments below are standard in concentration compactness, see for example \cite{kenig2006global}, \cite{kenig2008global},  thus we just sketch it. We will also use a minimization procedure from \cite{dkmnew}, which makes the whole proof more clear for us. We remark that we do not use the fact that $u$ is radial here.

First, for any $\tilde{t}_{n}\rightarrow T^{+}$, up to extracting subsequence, we may assume $u(\tilde{t}_{n})$ admits profile decomposition with profiles $\{\psi_{j},\{\tilde{x}_{j,n}, \tilde{\xi}_{j,n}, \tilde{\lambda}_{j,n}, \tilde{t}_{j,n}\}_{n}\}_{j}$. If $\forall j$, we have $\|\psi_{j}\|_{2}<\|Q\|_{2}$, then by  Theorem \ref{dodson}, and the nonlinear approximation argument  Proposition \ref{nonlinearprofile}, one would derive that $u$ scatters forward, which contradicts our assumption. Thus, there is at least one profile with mass no less than $\|Q\|_{2}^{2}$. On the other hand, by the asymptotically orthogonality of the mass, \eqref{orthmass}, and our assumption \ref{supercritical}, there can only be one profile with mass no less than $\|Q\|_{2}$. By reordering the profile if necessary, we assume the first profile $\psi_{1}$ is the unique profile with 
\begin{equation}\label{abovegroundstate}
\|\psi_{1}\|_{2}\geq \|Q\|_{2}.
\end{equation}
\begin{rem}
By Assumption \ref{supercritical} and the asymptotically  orthogonality of mass, all other profiles has mass $\ll 1$, which implies there associated nonlinear profile is global and scattering by the small data theory, Proposition \ref{smalldata}.
\end{rem}
Furthermore, $\psi_{1}$ must be a compact profile. Indeed, if $\psi_{1}$ is a forward scattering profile,  using the nonlinear approximation Proposition \ref{nonlinearprofile}, we have that $u$ scatter forward, a contradiction. If $\phi_{1}$ is a backward scattering profile, then we use Propositon \ref{nonlinearprofile} for the initial data $u(\tilde{t}_{n})$, but run the \eqref{nls} backwards rather than forwards, then we will get a uniform bound for $\|u\|_{L^{2(d+2)/d}_{t,x}[0,t_{n}\times \RRR^{d}]}$, which again implies $u$ scatters forward, a contradiction. See \cite{kenig2006global} for similar arguments for energy critical wave, see also \cite{kenig2008global}. Since $\psi_{1}$ is a compact profile, we do not distinguish  between $\psi_{1}$ and  its associated nonlinear profile $\Psi_{1}$, which is the solution to \eqref{nls} with initial data $\psi_{1}$.

Finally, we remark $\psi_{1}$ is not uniquely determined by the times sequence $\{\tilde{t_{n}}\}_{n}$, since one may scale or translate the profile, however, the $L^{2}$ norm of $\psi_{1}$ is uniquely determined, since $L^{2}$ is invariant under those symmetry .

To find a sequence $\{t_{n}\}$ such that its associated profile decomposition satisfy Lemma \ref{firstextraction}, we mimic the minimization procedure in Section 4 of \cite{dkmnew}. Though that paper deals with energy critical wave and energy critical Schr\"odinger, most arguments there  are quite general and works whenever there is a satisfying profile decomposition technique.  

One will need  the so-called  double profile decomposition at the technique level, which maybe compared to the diagonal  technique which is used in the proof Arzela-Ascoli Lemma.
\begin{lem}\label{double}
Assume $\{f_{n}^{p}\}_{n,p}$ are uniformaly bounded in $L^{2}$, assume for $\forall p$, $f_{n}^{p}$ admits a profile decomposition with profiles $\{g_{j}^{p}\}$ such that there exits $\{\eta_{j}\}_{j}$ such that for all $p$,
\begin{equation}\label{techdouble}
\sum_{j}\eta_{j}<\infty, \quad  \|e^{it\Delta}g_{j}^{p}\|\leq \eta_{j}.
\end{equation}
And assume for all $j$, $\{e^{it\Delta}g^{p}_{j}(0)\}_{p}$ admits a profile decomposition with profile $h_{j,k}$, then up to extracting subsequence, there exists $n_{p}\rightarrow \infty$ such that $\{f_{n_{p}}^{p}\}_{p}$ admits a profile decomostion with profile $\{h_{j,k}\}_{j,k}$.
\end{lem}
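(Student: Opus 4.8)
The plan is to run an Arzel\`a--Ascoli--type diagonal extraction, substituting the inner profile decompositions of the sequences $(g_j^p)_p$ into the outer profile decompositions of the sequences $(f_n^p)_n$. First I would fix notation: for each $p$ write $f_n^p = \sum_{j=1}^{l}(g_j^p)_{\gamma^p_{j,n}} + \omega_n^{p,l}$, where $\gamma^p_{j,n}\in G$ carries the parameters $\{x^p_{j,n},\xi^p_{j,n},\lambda^p_{j,n},t^p_{j,n}\}$, and for each $j$ write $g_j^p = \sum_{k=1}^{m}(h_{j,k})_{\beta^j_{k,p}} + \rho_p^{j,m}$ with $\beta^j_{k,p}\in G$. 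Since every element of $G$ is a linear $L^2$-isometry, commutes with $e^{it\Delta}$ up to a shift of the time origin, and the Strichartz norm $\|e^{it\Delta}\,\cdot\,\|_{L^{2(d+2)/d}_{t,x}}$ is invariant under all the symmetries generating $G$ (so $\|e^{it\Delta}(g_j^p)_{\gamma}\|_{L^{2(d+2)/d}_{t,x}}=\|e^{it\Delta}g_j^p\|_{L^{2(d+2)/d}_{t,x}}$ for every $\gamma\in G$), substituting the second decomposition into the first yields, for every pair of truncations $(l,m)$,
\begin{equation}
f_n^p = \sum_{j=1}^{l}\sum_{k=1}^{m}(h_{j,k})_{\gamma^p_{j,n}\circ\beta^j_{k,p}} + R_n^{p,l,m},\qquad R_n^{p,l,m}:=\sum_{j=1}^{l}(\rho_p^{j,m})_{\gamma^p_{j,n}} + \omega_n^{p,l}.
\end{equation}
It then remains to choose a subsequence in $p$ and times $n_p\to\infty$ so that, after reindexing the pairs $(j,k)$ into one sequence, $\{f^p_{n_p}\}_p$ obeys the five properties of Proposition \ref{profiledecomposition} with profiles $\{h_{j,k}\}$ and parameters $\{\gamma^p_{j,n_p}\circ\beta^j_{k,p}\}$.

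The hypothesis \eqref{techdouble} is exactly what makes the outer remainder uniformly small in $p$. Indeed, feeding the identity $\omega_n^{p,l}=\omega_n^{p,l'}+\sum_{j=l+1}^{l'}(g_j^p)_{\gamma^p_{j,n}}$ into the standard Pythagorean expansion of the Strichartz norm (a consequence of \eqref{orthstri} and \eqref{smallnessofmass}) and letting $l'\to\infty$ gives
$\limsup_n\|e^{it\Delta}\omega_n^{p,l}\|_{L^{2(d+2)/d}_{t,x}}^{2(d+2)/d}=\sum_{j>l}\|e^{it\Delta}g_j^p\|_{L^{2(d+2)/d}_{t,x}}^{2(d+2)/d}\le\sum_{j>l}\eta_j^{2(d+2)/d}$,
a bound independent of $p$ tending to $0$ as $l\to\infty$ (note $\sum_j\eta_j<\infty$ forces $\sum_j\eta_j^{2(d+2)/d}<\infty$). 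For the $\rho$-part, for fixed $l$ one has $\sum_{j\le l}\|e^{it\Delta}(\rho_p^{j,m})_{\gamma}\|_{L^{2(d+2)/d}_{t,x}}\le l\max_{j\le l}\|e^{it\Delta}\rho_p^{j,m}\|_{L^{2(d+2)/d}_{t,x}}$, which tends to $0$ as $m\to\infty$, $p\to\infty$ by the smallness of the inner remainders. Hence $\limsup_{l,m\to\infty}\limsup_p\limsup_n\|e^{it\Delta}R_n^{p,l,m}\|_{L^{2(d+2)/d}_{t,x}}=0$.

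Next I would carry out the diagonalization. For fixed $p$, profiles with distinct first index $j$ have $n$-asymptotically orthogonal parameters $\gamma^p_{j,n}$, while profiles $(j,k)\ne(j,k')$ sharing $j$ have $p$-asymptotically orthogonal parameters $\beta^j_{k,p}$, and left composition with the isometry $\gamma^p_{j,n}$ preserves the quantity on the left of \eqref{orthpara}. Therefore, for every pair $(j,k)\ne(j',k')$, that orthogonality quantity for $\gamma^p_{j,n}\circ\beta^j_{k,p}$ and $\gamma^{p}_{j',n}\circ\beta^{j'}_{k',p}$ tends to $\infty$ as $n\to\infty$ when $j\ne j'$, and already tends to $\infty$ as $p\to\infty$ when $j=j'$. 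A standard diagonal argument then extracts a subsequence in $p$ and picks $n_p\nearrow\infty$ fast enough that: (i) for every pair $(j,k)\ne(j',k')$ the orthogonality quantity of the composed parameters tends to $\infty$ as $p\to\infty$; (ii) all the $o_n(1)$ errors in the level-$n$ mass- and Strichartz-orthogonality identities, evaluated at $n=n_p$, become $o_p(1)$; (iii) $\limsup_p\|e^{it\Delta}R_{n_p}^{p,l,m}\|_{L^{2(d+2)/d}_{t,x}}\le\bigl(\sum_{j>l}\eta_j^{2(d+2)/d}\bigr)^{d/(2(d+2))}+\varepsilon(l,m)$ with $\varepsilon(l,m)\to0$ as $m\to\infty$. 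With these in hand the five defining properties for $\{f^p_{n_p}\}_p$ are routine: asymptotic orthogonality of parameters is (i); asymptotic orthogonality of mass and of Strichartz norms follow by composing the corresponding identities at the two levels, using the $L^2$-isometry and Strichartz-invariance of $G$ together with (ii); smallness of the remainder is (iii); and $\bigl(\gamma^p_{j,n_p}\circ\beta^j_{k,p}\bigr)^{-1}R_{n_p}^{p,l,m}\rightharpoonup0$ in $L^2$ follows from the weak-limit conditions \eqref{weakconve} at both levels combined with the asymptotic orthogonality of the parameters.

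The main obstacle is the step of producing a single sequence $n_p$ that simultaneously renders the composed parameters pairwise asymptotically orthogonal for all of the infinitely many pairs $(j,k)$, drives every $o_n(1)$ error to $o_p(1)$, and keeps $R_{n_p}^{p,l,m}$ small, all within the double limit in $(l,m)$ and $p$; the delicate case is $j\ne j'$, where orthogonality of the composed parameters is inherited only through the $n$-asymptotic orthogonality of $\gamma^p_{j,n}$ and $\gamma^p_{j',n}$, so $n_p$ must be chosen large relative to the (purely $p$-dependent) inner parameters $\beta^j_{k,p}$, which is precisely why a further subsequence in $p$ is unavoidable. A secondary bookkeeping point is the time parameters: the $g_j^p$ generally carry nonzero time shifts, so one must linearize the inner decompositions $\{e^{it\Delta}g_j^p\}_p$ at the correct times, which is handled by the usual convention of assuming (after extraction) that $-t^p_{j,n}/(\lambda^p_{j,n})^2$ converges to a limit in $[-\infty,+\infty]$.
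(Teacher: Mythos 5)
Your proposal is correct and is essentially the argument the paper itself relies on: the paper gives no written proof of Lemma \ref{double}, deferring to Lemma 3.16 of \cite{dkmnew}, whose proof is exactly this substitution of the inner profile decompositions into the outer one followed by a diagonal extraction in $(n,p)$ — with hypothesis \eqref{techdouble} supplying the $p$-uniform smallness of the outer remainder, and the case $j\neq j'$ handled, as you note, by taking $n_p$ large relative to the fixed inner parameters $\beta^j_{k,p}$. The only (harmless) imprecision is stating the Strichartz decoupling for $\omega_n^{p,l}$ as an equality; what the triangle inequality plus \eqref{orthstri} and \eqref{smallnessofmass} actually yield, and all that is needed, is the $p$-uniform upper bound $\limsup_n\|e^{it\Delta}\omega_n^{p,l}\|_{L^{2(d+2)/d}_{t,x}}^{2(d+2)/d}\le\sum_{j>l}\eta_j^{2(d+2)/d}$.
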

\begin{rem}
According to asymptotically orthogonality of mass \eqref{orthmass}, we have that for all $j,k$, $\|h_{j,k}\|_{2}\leq \|g_{j}\|_{2}$. 
\end{rem}
\begin{rem}
We will not need to check condition \eqref{techdouble} in our work. because for all the profile decompositions involved in our work, if we reorder the profiles such that $\|g_{j}\|\geq \|g_{j'}\| , \forall j\geq j'$, we always have $\|g_{1}\|_{2}\leq  \sqrt{2\|Q\|_{2}}$, and $\|g_{n}\|_{2}\lesssim \sqrt{\frac{1}{n-1}}$, $\forall n\geq 2$, thus \eqref{techdouble} automatically holds.
\end{rem}
Lemma \ref{double} is the natural generalization of Lemma 3.16 in \cite{dkmnew} for equation \eqref{nls}. we refer to \cite{dkmnew} for a proof. (Though the proof there is written for energy critical wave, it also works here.)

Now let us go back to the proof of Lemma \ref{firstextraction}.

Let $\AAA$ be the set of the time sequence $\{\tilde{t}_{n}\}_{n}$ such that  $\lim_{n\rightarrow}\tilde{t}_{n}=T^{+}(u)$ and $\{u(\tilde{t}_{n})\}_{n}$ admits a profile decomposition. Let $\phi_{1}$ be the profile with mass no less than $\|Q\|_{2}^{2}$. Recall that $\phi_{1}$ may not be uniquely determined  by the time sequence, but $\|\phi_{1}\|_{2}$ is . We define a map for $s=\{\tilde{t}_{n}\}_{n} \in A$ to $\RRR$ as 
\begin{equation}
\EEE(s)=\|\phi_{1}\|_{2}=\|\Psi_{1}\|_{2}
\end{equation}

We have that \eqref{abovegroundstate} implies 
\begin{equation}
\inf_{s\in\AAA}\EEE\geq \|Q\|_{2}.
\end{equation}
We now claim there exists an $s_{0}\in \AAA$ such that
\begin{equation}\label{miniexists}
\EEE(s_{0})=\inf_{s\in\AAA}\EEE.
\end{equation}
In fact, by Lemma \ref{double}, there exists $\EEE({s_{p}})\rightarrow \inf_{s\in\AAA}\EEE$, then apply the double profile decomposition Lemma \ref{double}, one will find $ \{u(\tilde{t}_{n_{p}}^{p})\}$ admits a profile decompostion and $ \EEE(\{\tilde{t}_{n_{p}^{p}}\}_{p})=\inf_{s\in\AAA}\EEE.$

Let us  finish the proof of Lemma \ref{firstextraction}.
Let $s_{0}=\{t_{n}\}_{n}$,  and $u(t_{n})$ admits a profile decomposition  with profiles $\{\phi_{j},\{x_{j,n}, \xi_{j,n}, \lambda_{j,n}, t_{j,n}\}_{n}\}_{j}$ , the associated unique nonlinear profile with mass above ground state be $\Phi_{1}$, clearly $\Phi_{1}$ satisfy
\begin{equation}
\|Q\|_{2}\leq \|\Phi_{1}\|_{2}\leq \|Q\|_{2}+\alpha.
\end{equation} 
We further claim $\Phi_{1}$ must be an almost periodic of form \eqref{almost}. Indeed, if not, then there exists time sequence $\{a_{n}\}$ within the life span of $\Phi_{1}$ such that $\Phi_{1}(a_{n})$ admits a profile decomposition and  any profile has mass strictly smaller than $\|\Phi_{1}\|_{2}$, then using the nonlinear approximation Proposition \ref{nonlinearprofile} and double profile decomposition Lemma \ref{double}, it is easy to see up to picking a subsequence ,$ u(t_{n}+\lambda^{2}_{1}a_{n})$ that admits a profile decompostion and  $\EEE(\{t_{n}+\lambda^{2}_{1}a_{n}\}_{n})<\EEE(s_{0})$, a contradiction. This concludes the proof.

\section{Proof for Subsection \ref{subsectionqua}}\label{quasisolitoncase}
We prove Lemma \ref{seeyou}, Lemma \ref{quasuper}, Lemma \ref{quathresh} here.

To prove Lemma \ref{seeyou}, one needs to use  the proof in \cite{dodson2015global}. We do a review of the argument in \cite{dodson2015global} here. 
\subsection{A quick review of Dodson's work \cite{dodson2015global}}
\subsubsection{warm up and energy tensor}
One is recommended to use the associated energy tensor to do computation. We use Einstein summation convention.

Let
\begin{equation}
iu_{t}+\Delta u=-|u|^{p-1}u.
\end{equation}
(Note in our case, $p=1+4/d$.)

and  let

\begin{equation}\label{tensor}
\begin{aligned}
&T_{00}=T_{00}(u):=|u|^{2},  \quad T_{j0}=T_{0j}=2\Im u_{j}\bar{u},\\
 &T_{kj}=T_{jk}=4\Re \bar{u}_{j}u_{k}-\delta_{kj}\Delta |u|^{2}-2\frac{p-1}{p+1}|u|^{p+1},
\end{aligned}
\end{equation}

Then, 
\begin{equation}
\partial_{t}T_{00}+\partial_{j}T_{0j}=0, \partial_{t}T_{0j}+\partial_{k}T_{jk}=0.
\end{equation}

Let us recall Virial identity as an example.
The computation here is just formal, we assume a priori all the quantities in the following computation is finite.
\begin{equation}\label{virial}
\begin{aligned}
  &\partial_{tt}\int |x|^{2}|u|^{2}
=\partial_{tt}\int |x|^{2}T_{00}
=\partial_{t}\int |x|^{2}\partial_{t}T_{00}\\
=&\partial_{t}\int \partial_{j} |x|^{2}T_{j0}
=\int \partial_{j}\partial_{k}|x|^{2}T_{j,k}=16E(u).
\end{aligned}
\end{equation}

Almost all  results regarding the long time dynamic of \eqref{nls} rely on \eqref{virial} in some sense. Intuitively, when $E(u)$ is positive, (which is always the case when $\|u\|_{2}<\|Q\|_{2}$ ), then $\int |x|^{2}|u|^{2}$ will  grow to infinity. On the other hand, since the mass $\int |u|^{2}$ is conserved, this implies that mass are ejected to infinity, which should be understood as a dispersion effect.

We further summarize the last three identities in \eqref{virial}.
\begin{equation}\label{v1}
\frac{d}{dt}\int x_{j}T_{0j}\equiv \frac{d}{dt}\int x_{j}\Im u_{j}\bar{u}=\frac{1}{2}\int \partial_{k}x_{j}T_{jk}\equiv4E(u).
\end{equation}
\subsubsection{A sketch of Dodson's work}
Using \eqref{v1} , Dodson shows
\begin{prop}\label{np}
it is impossible that \eqref{minimalcouter} holds with  $\int N^{3}(t)=\infty$ and 
\begin{equation}\label{subcrticl}
\|w\|_{2}<\|Q\|_{2}-\eta,  \text{ for some } \eta>0
\end{equation}
unless $w\equiv 0$
\end{prop}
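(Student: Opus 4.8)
The plan is to argue by contradiction, following the standard Kenig--Merle rigidity scheme adapted by Dodson to the mass-critical setting. Assume that $w\not\equiv 0$ is an almost periodic solution of the form \eqref{minimalcouter} with $x(t),\xi(t)$ not necessarily zero, $\|w\|_{2}<\|Q\|_{2}-\eta$, and $\int_{0}^{\infty}N^{3}(t)\,dt=\infty$. The engine of the contradiction is the virial/Morawetz quantity built from \eqref{v1}, namely a truncated version of $\int x_{j}\,\Im w_{j}\bar w$, whose derivative reduces to $4E(w)$ modulo error terms, and by the sharp Gagliardo--Nirenberg inequality Lemma \ref{lemsharpGN} the strict subcriticality $\|w\|_{2}<\|Q\|_{2}-\eta$ forces
\begin{equation}
E(w)\;\geq\; c(\eta)\,\|\nabla w\|_{2}^{2}
\end{equation}
for a positive constant $c(\eta)$; so the energy is not merely nonnegative but coercive.

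First I would set up the frequency-localized, spatially-truncated interaction Morawetz functional
\begin{equation}
M(t)\;:=\;\int \phi\!\left(\tfrac{x-x(t)}{R(t)}\right)\,(x-x(t))_{j}\,\Im\!\big(\partial_{j}(P_{\leq C(t)}w)\,\overline{P_{\leq C(t)}w}\big)\,dx,
\end{equation}
where $R(t)$ is a slowly varying spatial scale chosen $\gg 1/N(t)$ and $C(t)$ a frequency cutoff chosen $\gtrsim N(t)$, both adapted to the compactness of $\{L_{t}\}_{t}$. The point is that $|M(t)|\lesssim R(t)\,C(t)\,\|w\|_{2}^{2}$ is bounded by a quantity growing slower than the main term, while differentiating $M$ in $t$ and using the local conservation laws $\partial_{t}T_{0j}+\partial_{k}T_{jk}=0$ together with the bubble decomposition of $w$ produces a principal term $\sim \int_{t_{0}}^{t_{1}} N^{2}(s)\,E_{\mathrm{loc}}(s)\,ds$ which by coercivity is $\gtrsim c(\eta)\int_{t_{0}}^{t_{1}}N^{2}(s)\|\nabla(P_{\leq C(s)}w)\|_{2}^{2}\,ds$. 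Here one must convert time integrals of $N^{2}(s)$ against the intrinsic $H^{1}$-size of the bubble (which is $\sim N(s)$) into a lower bound $\gtrsim \int_{t_{0}}^{t_{1}} N^{3}(s)\,ds$, which diverges by assumption, contradicting the uniform bound on $M$.

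The main obstacle—and the part requiring genuine work, exactly as in Dodson \cite{dodson2015global}—is controlling the error terms generated by the frequency truncation $P_{\leq C(t)}$ and the spatial truncation $\phi(\cdot/R(t))$: the commutators $[P_{\leq C(t)},|w|^{4/d}w]$ and the tails $P_{>C(t)}w$ must be shown to contribute a negligible fraction of the main term, which is where Dodson's long-time Strichartz estimates enter. Concretely one needs, on each time block where $\int N^{3}\sim K$, a bound of the shape $\|P_{>K}w\|_{L^{2}_{t}L^{2(d+2)/(d-2)}_{x}}\lesssim o_{K}(1)$ plus bilinear estimates to absorb the interaction error; the radial reduction $x(t)\equiv\xi(t)\equiv 0$ is \emph{not} available at this stage (that is Step 4, Lemma \ref{seeyou}), so the argument must be run with moving center and moving Galilean frame, tracking $x(t),\xi(t)$ via the almost-periodicity. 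Once all error terms are shown to be $o(1)$ times the principal term, the divergence of $\int_{0}^{\infty}N^{3}(t)\,dt$ against the a priori bound $|M(t)|\lesssim R(t)C(t)\|w\|_{2}^{2}$ (with $R,C$ chosen to grow slowly enough) yields the contradiction, forcing $w\equiv 0$.
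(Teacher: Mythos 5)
Your overall architecture is the right one and matches what the paper reviews from \cite{dodson2015global}: a frequency- and space-truncated Morawetz/virial quantity built from \eqref{v1}, coercivity of the localized energy via the sharp Gagliardo--Nirenberg inequality under $\|w\|_{2}<\|Q\|_{2}-\eta$, long-time Strichartz estimates to absorb the commutator and tail errors, and a contradiction between the divergence of $\int N^{3}$ and the a priori bound on $M$. However, your quantitative accounting of the principal term does not close. You take the main term to be $\int N^{2}(s)E_{\mathrm{loc}}(s)\,ds\gtrsim c(\eta)\int N^{2}(s)\|\nabla(P_{\leq C(s)}w)\|_{2}^{2}\,ds$; since by almost periodicity the low-frequency bulk satisfies $\|\nabla(P_{\leq C(s)}w)\|_{2}^{2}\sim N^{2}(s)$, this is $\sim\int N^{4}(s)\,ds$, and because $N(t)\leq 1$ one has $\int N^{4}\leq\int N^{3}$, so the hypothesis $\int N^{3}=\infty$ does not force your main term to diverge. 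The correct bookkeeping (see \eqref{gemora}--\eqref{conclumora}) attaches exactly one power of the slowly varying $\tilde N(t)\lesssim N(t)$ to the weight, and pairs it with the potential-energy part of the coercivity \eqref{keycoer}, namely $\|\chi Iw\|_{L^{2+4/d}}^{2+4/d}\sim N^{2}(t)$, yielding $\int\tilde N\,N^{2}\sim\int N^{3}=K$ against $|M|\lesssim R\,o(K)$, after which one fixes $R$ large and then $K$ large. (Also note that $E(w)$ and $\|\nabla w\|_{2}$ are not defined for $w\in L^{2}$; coercivity can only be invoked after the truncations, as you in fact do later.)

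The more serious gap is your plan for general $x(t),\xi(t)$: a one-particle Morawetz centered at the moving point $x(t)$ in the moving Galilean frame. Differentiating the weight $\phi((x-x(t))/R(t))\,(x-x(t))_{j}$ in time produces terms proportional to $\dot x(t)$ and $\dot R(t)$, and the momentum density $\Im(\partial_{j}u\,\bar u)$ carries a contribution of size $|\xi(t)|\,|u|^{2}$; at $L^{2}$ regularity the modulation parameters of an almost periodic solution need not be differentiable and $\xi(t)$ need not be bounded, so none of these terms is controlled. This is precisely why Dodson treats the nonradial case with a Galilean-invariant \emph{interaction} Morawetz functional rather than a one-particle one, and why the paper's remark surrounding \eqref{fail} singles out the loss of control of the center as the essential obstruction. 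If you restrict to $x(t)\equiv\xi(t)\equiv 0$ (which is all the paper itself proves in detail, deferring the general case to \cite{dodson2015global}), your sketch is sound modulo the power counting above; as a proof of Proposition \ref{np} in full generality it is missing the key idea.
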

 We will use $d=3$ here to present a sketch for the proof of Proposition \ref{np}. We only do the case $\xi(t), x(t)=0$, (recall $\xi(t), x(t), N(t)$ in \eqref{minimalcouter} ).
 \footnote{ Indeed, the computation is easier for $d=1$ or $d=2$, however, Dodson's work \cite{dodson2015global} implicitly use his long time Strichartz estimate, which involves extra technical difficulty for $d=1,2$. }.

\textbf{First subcase: $\xi(t)\equiv 0,x(t)\equiv 0, N(t)\sim 1$ }

Let $T_{K}$ be the unique time such that
\begin{equation}
\int_{0}^{T_{K}}N^{3}(t)=K.
\end{equation}

All the analysis below is in $[0,T_{K}]$ for $w$ as in \eqref{minimalcouter}. 

We first consider the subcase $\xi t)\equiv 0,  x(t)\equiv 0,  N(t)\sim 1$ for $w$ in \eqref{minimalcouter}.  We remark that  $x(t), \xi(t)\equiv 0$ when one only considers radial solutions.  Now the solution is like a soliton, without dispersion. It is very natural to apply \eqref{v1} to get a contradiction. However, $w$ is not in $H^{1}$, so one cannot directly use mass constriction \eqref{subcrticl} to use the coersiveness of energy and one does not have the extra integrability of $xu$ to make sense the left side of \eqref{v1}. So, very naturally, one needs to do truncation in space and frequency.

We need a cut off version of $x$, for technical reason, we will need a $\psi(x)$ such that $\psi(x)=1$ for $|x|\leq 1$, and $\psi(x)\lesssim \frac{1}{|x|}$, and $\partial_{k}\psi(x)x_{j}$ is semi positive definite \footnote{ This  is not hard, indeed, one first constructs some convex $f(x)$ which is like $|x|^{2}$ near the origin, slowly grows for $|x|\geq 2$ ,and take $\psi(x) x_{j}=\partial_{j}f(x)$. Note we do not need $f$ to be uniformly strictly convex.}.

We also define the Fourier truncation, $I:=I_{K}\equiv P_{\leq CK}$, here $C$ is some fixed large constant. Let $F(v):=-|v|^{4/d}v$. Now note 
\begin{equation}\label{mnls}
(i\partial_{t}+\Delta)Iw=I(F(w))=F(I(w))+\{I(F(w)-F(Iw)\}.
\end{equation}
Let $M(t)$ be the truncated version of $\int x_{j}\Im w_{j}\bar{w}$, ($M$ denotes Morawetz action in the literature) :
\begin{equation}\label{moraaction}
M(t)=\int \psi(\frac{x}{R})x_{j}\Im I_{K}w_{j}I_{K}\bar{w}.
\end{equation}
 Since $\int |w|^{2}$ is bounded, one immediately obtain that
\begin{equation}
|M(t)|\lesssim RK.
\end{equation}
Recall \eqref{minimalcouter},  using the fact that $w=N(t)^{d/2}L_{t}(N(t)x)$, $N(t)\sim 1$ and $\{L_{t}\}$ is a precompact in $L^{2}$, it is easy to upgrade the above to
\begin{equation}\label{moraestimate1}
|M(t)|\lesssim Ro(K).
\end{equation} 

Now computing the derivative of $M(t)$, one has
\begin{equation}
\begin{aligned}
\frac{d}{dt}M(t)=&E_{1}+\int \partial_{k}(\psi(x/R))x_{j}\frac{1}{2}T_{jk}(Iw)\\
=&E_{1}+\int \partial_{j}\partial_{k}(\psi(x/R)x_{j})\frac{1}{2}\{4\Re \bar{Iw}_{j}Iw_{k}-\delta_{kj}\Delta |Iw|^{2}-\frac{2\cdot 4/d}{4/d+2}|w|^{\frac{4}{d}+2}\}\\
=&E_{1}+E_{2}+E_{3}+4\int_{|x|\leq R}\left (\frac{1}{2}|\nabla u|^{2}-\frac{1}{2+\frac{4}{d}}|w|^{2+4/d}\right),
\end{aligned}
\end{equation}

 shere $E_{1},E_{2},E_{3}$ are as the following.
\begin{eqnarray}\label{threeerror}
&&E_{1}=-i\int \psi (\frac{x}{R})x\left(\{IF(w)-F(Iw)\}\nabla Iw+Iw \nabla  \{IF(w)-F(Iw)\}\right),\\
&&E_{2}=\int [\Delta \delta_{jk}\psi(x/R)x_{j}](-\delta_{kj}|Iw|^{2}),\\
&&E_{3}= \int_{|x|\geq R} \partial_{k}(\psi(x/R)x_{j})\frac{1}{2}\{4\Re \bar{Iw}_{j}Iw_{k}-\frac{2\cdot 4/d}{4/d+2}|w|^{\frac{4}{d}+2}\}.
\end{eqnarray}
Here $E_{1}$ is caused by the commutator type error in \eqref{mnls}.

One needs to explore the coersiveness of $\int_{|x|\leq R}\frac{1}{2}|\nabla Iw|^{2}-\frac{1}{2+\frac{4}{d} }|Iw|^{6}$, thus one needs to introduce an extra smooth cut-off function $\chi(x)$ which is 1 for $|x|\leq \frac{9}{10}$, and vanishes for $|x|\geq 1$. Then
\begin{equation}\label{coersivenss}
\int_{|x|\leq R}\frac{1}{2}|\nabla Iw|^{2}-\frac{1}{2+\frac{4}{d}}|Iw|^{6}\geq E(\chi(x/R)Iw)
\end{equation}
We remark that strictly speaking, \eqref{coersivenss} is not completely right, since we neglect the error caused by the commutator $(\nabla (\chi Iw)-\chi \nabla Iw)$, since this is just a sketch, we omit this technical point, one should refer to \cite{dodson2015global}  for more details.

Now, using \eqref{sharpGN} and the important condition \eqref{subcrticl}, one has
\begin{equation}\label{keycoer}
E(\|\chi(\frac{x}{R})Iw\|)\geq c_{0}(\eta)(\|\chi(\frac{x}{R})Iw\|^{4/d+2}+\|\nabla (\chi(x/R)Iw)\|^{2}).
\end{equation}
Error $E_{2}$, $E_{3}$ will be estimated by 
\begin{equation}\label{e2}
\begin{aligned}
|E_{2}|\lesssim \frac{1}{R^{2}}\int_{|x|\geq R}|Iw|^{2},\\
|E_{3}-\int_{|x|\geq R} \partial_{k}(\psi(x/R)x_{j})\frac{1}{2}\{4\Re \bar{Iw}_{j}Iw_{k}|\lesssim  \int_{|x|\geq R}|Iw|^{2+4/d}
\end{aligned}
\end{equation}
Note since $\partial_{k}(\psi x_{j})$ is semipositive definite, we have
\begin{equation}\label{e3}
 E_{3}\geq -C_{1}\int_{|x|\geq R}|Iw|^{4/d+2}  \text{ for some } C_{1}>0.
\end{equation}
$E_{1}$ is estimated by the commutator type estimate,  see Lemma 4.7 in \cite{dodson2012global}.

\begin{equation}\label{finalerror}
\|I_{K}F(w)-F(I_{K} w)\|_{L_{t}^{2}L_{x}^{2d/d+2}[0,T_{K}]}\lesssim o_{K}(1).
\end{equation}
The proof of the above relies on Dodson's long time Strichartz estimate, Theorem 1.24 in \cite{dodson2012global} , which is indeed the key ingredient in \cite{dodson2012global}.

Thus,
\begin{equation}
\left|\int_{0}^{T_{K}}E_{1}\right|\lesssim Ro_{K}(1)\|\nabla I_{K}w\|_{L^{2}L_{x}^{2d/d-2}}.
\end{equation}
And the long time Strichartz estimate will further give, see Lemma 4.5 in \cite{dodson2012global},
\begin{equation}\label{lll}
\|\nabla I_{K}w\|_{L^{2}L_{x}^{2d/d-2}}\lesssim  K
\end{equation}
Thus, 
\begin{equation}\label{finalerror2}
\int_{0}^{T_{K}}|E_{1}|\geq Ro_{K}(1)K.
\end{equation}

We remark that  the long time Strichartz estimate is purely analytic, does not relies on the energy structure, i.e. the difference between focusing and defocusing does not matter here.

To summarize,
\begin{equation}\label{endofmora}
\begin{aligned}
\frac{d}{dt}M(t)&\geq 4E(\chi(x/R)Iw)+E_{1}+E_{2}+E_{3}\\
&\geq c_{0}(\eta)\|\chi(x/R)Iw\|_{L^{4/d+2}}+E_{1}-|E_{2}|-C_{1}\int_{|x|\geq R} |Iw|^{4/d+2}
\end{aligned}
\end{equation} 
In the last step, we pluged in the estimate \eqref{e3}.

Now, integrate in time  on $[0,T_{k}]$ plug in the estimate for $E_{1}$, \eqref{finalerror2}, and estimate for $E_{2}$, \eqref{e2}, we recover the estimate (3.26) \cite{dodson2015global}\footnote{the numerics here are slightly different, because in  \cite{dodson2015global}, that part is done for $d=1$, and the error caused by $E_{1}$ is neglected in this step but treated later.}.

\begin{equation}\label{conclumora}
\int_{0}^{T_{K}}\frac{d}{dt}M(t)\geq \int_{0}^{T_{k}}c_{0}(\eta)\chi(x/R)Iw\|_{L_{x}^{2+4/d}}^{2+4/d}-\int_{0}^{T_{K}} \sup_{t}\frac{1}{R^{2}}\|Iw(t)\|_{L^{2}_{x}}^{2}-\int_{0}^{T_K}\int_{|x|\geq R}Iw(t,x)|^{6}-Ro_{K}(1)K.
\end{equation}
Estimate \eqref{conclumora} is enough to give a contradiction and  conclude $u$ must be zero, one is refer to  \cite{dodson2015global}, in particular (3.26) in \cite{dodson2015global} for more details. We sketch some standard arguments  below for the convenience of the readers.

 The left side of \eqref{conclumora}  is controlled by Ro(K), by estimate\eqref{moraestimate1}.

On the other hand, since we assume $N(t)\sim 1$, 
$\int_{0}^{T_{K}}\sim \int_{0}^{T_{K}}N(t)^{3}=K$, thus 
\begin{equation}
\int_{0}^{T_{K}} sup_{t}\frac{1}{R^{2}}\|u(t) \|_{L^{2}_{x}}^{2}\lesssim \frac{1}{R^{2}}K
\end{equation}
And, recall again $u$ is of form \eqref{minimalcouter}, $N(t)\sim 1$, then for any time interval $J$ of length $\sim 1$, local theory of  \eqref{nls} gives $\int_{J}|u|_{L_{x}^{4/d+2}}^{4/d+2}\sim 1$. Now using the fact $u$ is of form \eqref{conclumora}, i.e. all the mass of $u$ is uniformly concentrated in physical space and frequency space, we  obtain
\begin{equation}
\int_{J}|\chi(x/R)Iw^{4/d+2}_{L_{x}^{4/d+2}}\sim 1, \int_{J}\int_{|x|\geq R} \lesssim o_{R}(1).
\end{equation}

Thus, the right side of \eqref{conclumora} is bounded below by
\begin{equation}
c_{0}(\eta)K-\tilde{C}\frac{K}{R^{2}}-o_{R}(K)-Ro(K).
\end{equation}
Here $\tilde{C}$ is some universal constant.
Now one obtains that $ Ro(K)\geq c_{0}(\eta)K-\tilde{C}\frac{K}{R^{2}}-o_{R}(K)-Ro(K)$, a contradiction.

The key point to conclude a contradiction by using $Ro(K)\geq c_{0}(\eta)K-\tilde{C}\frac{K}{R^{2}}-o_{R}(K)-Ro(K)$ is that here $c_{0}(\eta)$ in \eqref{keycoer} does not depend on $R$ or $K$, so one can first choose $R$ large enough, then one further choose $K$ large enough to get a contradiction.

\textbf{General Case: $\xi(t), x(t)\equiv 0$}

Now, for general case with $\xi(t), x(t)=0$ , (which covers all general radial solution), it is very natural to define the Morawetz action as 
\begin{equation}\label{gemora}
M(t):=\int \psi \left(\frac{x{N}(t)}{R}\right)x{N}(t)\Im \nabla Iw I\bar{w}.
\end{equation}
However, if one directly relies on  the above Morawetz action to argue as previously, one will facethe  problem that one does not have good control about $N'(t)$.  This is  handled by Dodson using so called "upcoming algorithm", basically he constructs some slowly oscillating $\tilde{N}(t)\lesssim N(t)$ according to the behavior of $N(t)$, and constructs Morawetz action as
\begin{equation}
M(t)=\int \psi \left(\frac{x\tilde{N}(t)}{R}\right)x\tilde{N}(t))\Im \nabla Iw I\bar{w}
\end{equation} 
The proof left follows is in principle as the previous subcase where $N(t)\sim 1$, see section 4 in \cite{dodson2015global} for more details.

As emphasized in the end of the previous case,  the key part and the only part Dodson's proof using the fact $\|u\|_{2}<\|Q\|_{2}-\eta$ is that this will gives a universal constant $c_{0}(\eta)$ such that
\begin{equation}\label{coingeneral}
E(\chi(\tilde{N}(t)x/R)Iw)\geq c_{0}(\eta )[\|\nabla \chi(\tilde{N}(t)x/R)Iw \|_{2}^{2}+\|\chi(\tilde{N}(t)x/R)Iw\|_{L^{4/d+2}_{x}}^{4/d+2}].
\end{equation}

\subsection{Proof of Lemma \ref{seeyou}}
The proof of Lemma \ref{seeyou} is by contradiction. Indeed, if Lemma \ref{seeyou} does not hold, then one recovers \eqref{coingeneral} even   the mass of $w$ is not under the ground state. Then, one argue as the proof of Proposition \ref{np}, which we just reviewed in the previous subsection, to conclude $w=0$, which is clearly a contridction since $\|w\|_{2}\geq \|Q\|_{2}.$

\begin{rem}
Proposition \ref{np} holds for general non radial solution by using a  version of interaction Morawetz estimate.  Ever since \cite{colliander2008global}, there are a lot of works using interaction version of certain estimates to show results for general solutions rather than radial solution, such as \cite{dodson2012global}, \cite{dodson2015global} and many others. However,we cannot have a natural useful generalization  of Lemma \ref{seeyou} here.  It seems to us if one directly follows the arguments in \cite{dodson2015global}, where a nonradial version Propostion \ref{np} is proved, one can only conclude that there exists a sequence $t_{n}\leq T_{n}$, $R_{n}\gg \frac{1}{N(t_{n})}, \xi_{n}, x_{n}$, 
$\int_{0}^{T_{n}}N^{3}(t)=K_{n}$, such that
\begin{equation}\label{fail}
E(\chi(\frac{x-x_{n}}{R_{n}})P_{\leq CK_{n}}e^{-i\xi_{n}x}w(t_{n}))\leq \frac{1}{n} \|\nabla \psi(\frac{x-x_{n}}{R_{n}})P_{\leq CK_{n}}e^{-i\xi_{n}x}w(t_{n}))\|_{2}^{2}
\end{equation}
Formula \eqref{fail} is  of no use to us, because we do not have control of $x_{n}$ here, thus we are not ensured $\chi(\frac{x-x_{n}}{R_{n}})P_{\leq CK_{n}}e^{-i\xi_{n}x}w(t_{n}))$ contains almost all the mass of $w$ as $n\rightarrow \infty$, which will be critical later.
\end{rem}

\subsection{Proof of Lemma \ref{quathresh}}
Lemma \ref{quathresh} is implied by Lemma \ref{seeyou}, with the help of Lemma \ref{mrvar}. We present a short proof here.  Recall $w$ is of form \eqref{minimalcouter}, and since we are consider radial solution,  we have $\xi(t)=0, x(t)=0$. Since $R_{n}\gg \frac{1}{N(t_{n})}$, $K_{n}\gg 1$ and $\{L_{t}\}_{t}$ is a precompact $L^{2}$ family, we easily have
\begin{equation}
\|w(t_{n})-\chi(x/R_{n})P_{<CK_{n}}w(t_{n})\|_{L^{2}}\xrightarrow{n\rightarrow \infty}0.
\end{equation}
Now let $\tilde{w}_{n}:=\chi(x/R_{n})P_{<CK_{n}}w(t_{n})$. Clearly,  to prove Lemma \ref{quathresh}, we only need to show there exists  $\lambda_{n}, \gamma_{n}$ such that
\begin{equation}\label{immecon}
\lambda_{n}^{d/2}w_{n}(\lambda_{n}x)e^{i\gamma_{n}}\rightarrow Q \text{  in } L^{2}.
\end{equation}

But \eqref{immecon} follows from Lemma \ref{mrvar} because
\begin{equation}
\begin{aligned}
\|w_{n}\|\leq \|w\|_{2}\leq \|Q\|_{2}+\frac{1}{n},\\
E(w_{n})\leq \frac{1}{n}\|\nabla w_{n}\|_{2}^{2}
\end{aligned}
\end{equation}
\subsection{Proof of Lemma \ref{quasuper}}
Now we turn to the proof of Lemma \ref{quasuper}. Note we restrict ourselves to radial solutions. Let $t_{n}, K_{n}, R_{n}$ be as in Lemma \ref{seeyou}, let $v_{n}=\chi(x/R_{n})P_{\leq CK_{n}}u(t_{n}), \lambda_{n}=\frac{\|\nabla Q\|_{2}}{\|\nabla v_{n}\|_{2}}$. We apriori have
\begin{equation}
\|v_{n}\|_{2}\leq \|Q\|_{2}+\alpha,\quad  \alpha \text{ small enough },
\end{equation}

Thus, when $n$ is large enough, $1/n\leq \alpha$, thus by Lemma \ref{mrvar}, we can find a sequence of $\gamma_{n}$, such that
\begin{equation}\label{perground}
\|\frac{1}{\lambda_{n}^{d/2}}v_{n}(\frac{x}{\lambda_{n}})e^{i\gamma_{n}}-Q\|_{H^{1}}\leq \delta(\alpha)\leq 1.
\end{equation}

Note the space translation parameter in Lemma \ref{mrvar} will not  appear because our functions are all radial.

Also recall $w(t)=N(t)^{d/2}L_{t}(N(t)x)$, $L_{t}$ is a precompact $L^{2}$ family, $N(t)\leq 1$,thus  the condition $K_{n}\gg 1, R_{n}\gg \frac{1}{N(t_{n})}$ implies 
\begin{equation}\label{uniformclo}
\|w(t_{n})-v_{n}\|_{2}=o_{n}(1)
\end{equation}
and further it  implies that $\{\frac{1}{N^{d/2}(t_{n})}v_{n}(\frac{x}{N(t_{n})})\}_{n}$ is a precompact $L^{2}$ family.

Thus, by  \eqref{perground} we have $N(t_{n})\sim \lambda_{n}$.
Now let $f_{n}=\{\frac{1}{N^{d/2}(t_{n})}v_{n}(\frac{x}{N(t_{n})})\}_{n}$, we have that
\begin{enumerate}
\item $f_{n}$ is a precompact $L^{2}$ sequence.
\item $f_{n}$ is uniformly bounded.
\item $E(f_{n})\lesssim \frac{1}{n}$. (This contains the possibility that $E(f_{n})$ is negative).
\end{enumerate}
Up to extracting a subsequence, we may assume $f_{n}$ strongly converges to $f_{0}$ in $L^{2}$, in particular, with \eqref{uniformclo}, we obtain
\begin{equation}
\|f_{0}\|=\|w\|_{2}, \|Q\|<\|f\|_{2}<\|Q\|_{2}+\alpha
\end{equation}

Now, by Fatou's Lemma, $\|f_{0}\|_{\dot{H}^{1}}\leq \liminf_{n}\|f_{n}\|_{\dot{H}^{1}}$.

By interpolation\footnote{Note we do not need the radial Sobolev embedding here} between $L^{2}$ and $H^{1}$, $\|f_{n}\|_{L^{4/d+2}}\rightarrow \|f_{0}\|_{L^{4/d+2}}$.

Thus, we have the condition
\begin{equation}\label{energycondition}
E(f_{0})\leq \liminf E(f_{n})\leq 0
\end{equation}
Now, to summarize, with \eqref{uniformclo}, strong convergence of $f_{n}$ to $f_{0}$ in $L^{2}$,  we have
\begin{itemize}
\item $w$ is an almost periodic solution,$$ w(t)=N^{d/2}(t)L_{t}(N(t),x) , t\geq 0, N(t)\leq 1.$$
\item $ L_{t_{n}}$ converges strongly in $L^{2}$ to $f$.
\item $f$ is in $H^{1}$, and if of nonpositive energy.
\end{itemize}
These three property is enough to derive a contradiction, we prove something slightly more general for the convenience of future work.

The proof we are left with  will not depend on that the radial property, we only need that  $w$ is almost periodic in the sense that
\begin{equation}\label{weakalmost}
w(t)=\frac{1}{\lambda^{d}(t)}L_{t}(\frac{x-x(t)}{\lambda(t)})e^{ix\xi(t)}, t\geq 0.
\end{equation}

Let $F$ be the solution to \eqref{nls} with initial data $f$, then by Theorem \ref{mr1}, $F$ will blow up in finite time $T^{+}>0$, and according to Corollary \ref{mr1cor},  there exists $\delta_{0}=\delta_{0}(F)>0$, such that for $\forall A>0$, there exists $T_{A}<T^{+}, x_{0}\in \RRR^{d}$ 
\begin{equation}\label{decouplemass}
 \int_{|x-x_{0}|\leq l_{0}} |F(T_{A},x)|^{2}\geq \delta_{0}, \int_{|x-x_{0}|\geq Al_{0}} |F(T_{A},x)|^{2}\geq \delta_{0}. 
 \end{equation}
 And note by standard local theory of \eqref{nls},
 \begin{equation}\label{finitesri}
\| F(t,x)\|_{L_{t,x}^{2(d+2)/d}([0,T_{A}]\times \RRR^{d})}\leq C_{A}<\infty.
 \end{equation}
 \begin{rem}
Note  here $\delta_{0}$ is fixed once $w$ is fixed, and $A$ can be chosen arbitrary large (by choosing $T=T_{A}$ close to $T^{+}$ enough.)
 \end{rem}
 On the other hand, since $w$ is an almost periodic solution of form \eqref{weakalmost}, we claim
\begin{lem}\label{aprigid}
 Assume $w$ is of form \eqref{weakalmost}, then, for any $\delta>0$, there exists $A=A_{\delta}$ such that if  for some $z_{0}\in \RRR^{d}, t_{0}\in \infty), h_{0}>0$ such that
 \begin{equation}\label{nondecouple}
 \int_{|x-z_{0}|\leq h_{0}}|w(t_{0})|^{2}\geq \frac{\delta}{2},
 \end{equation}
 then we must have
 \begin{equation}
 \int_{|x-z_{0}|\geq Ah_{0}}|w(t_{0},x)|^{2}dx\leq \frac{\delta}{2}.
 \end{equation}
 \end{lem}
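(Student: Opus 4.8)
The plan is to reduce Lemma~\ref{aprigid} to a purely functional-analytic fact about a uniform concentration modulus for the precompact $L^{2}$-orbit $\{L_{t}\}_{t\ge 0}$ appearing in \eqref{weakalmost}, and then to extract that modulus from precompactness. First I would change variables by $y=\frac{x-x(t)}{\lambda(t)}$: by the structure \eqref{weakalmost}, $|w(t_{0},x)|^{2}$ is a rescaled translate of $|L_{t_{0}}|^{2}$ (the Galilean phase $e^{ix\xi(t_{0})}$ being invisible to $|w|^{2}$), so, setting $\zeta_{0}:=\frac{z_{0}-x(t_{0})}{\lambda(t_{0})}$ and $r_{0}:=\frac{h_{0}}{\lambda(t_{0})}>0$, the hypothesis \eqref{nondecouple} becomes $\int_{|y-\zeta_{0}|\le r_{0}}|L_{t_{0}}(y)|^{2}\,dy\ge\delta/2$ and the desired conclusion becomes $\int_{|y-\zeta_{0}|\ge A r_{0}}|L_{t_{0}}(y)|^{2}\,dy\le\delta/2$. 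Hence it suffices to find, for each $\delta>0$, some $A=A_{\delta}$ such that for every $L$ in the family and every $\zeta\in\RRR^{d}$, $r>0$ with $\int_{|y-\zeta|\le r}|L|^{2}\ge\delta/2$ one has $\int_{|y-\zeta|\ge A r}|L|^{2}\le\delta/2$.

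Second I would record the two consequences of precompactness that do the work. Since $f\mapsto|f|^{2}$ is locally Lipschitz from $L^{2}$ into $L^{1}$ (as $\||f|^{2}-|g|^{2}\|_{1}\le\|f-g\|_{2}(\|f\|_{2}+\|g\|_{2})$), the family $\{|L_{t}|^{2}\}_{t\ge 0}$ is precompact in $L^{1}(\RRR^{d})$, hence both tight and uniformly integrable (by the usual $\epsilon$-net argument). Tightness furnishes $R_{0}=R_{0}(\delta)$ with $\sup_{t\ge 0}\int_{|y|\ge R_{0}}|L_{t}|^{2}<\delta/4$. Uniform integrability, applied to balls (whose Lebesgue measure $c_{d}r^{d}$ is small with $r$, uniformly in the center), furnishes $r_{*}=r_{*}(\delta)>0$ with $\sup_{t\ge 0}\sup_{\zeta\in\RRR^{d}}\int_{|y-\zeta|\le r_{*}}|L_{t}|^{2}<\delta/4$.

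Finally the argument closes in two cases, with $A_{\delta}:=1+2R_{0}/r_{*}$. If $r_{0}\le r_{*}$, then $\int_{|y-\zeta_{0}|\le r_{0}}|L_{t_{0}}|^{2}\le\int_{|y-\zeta_{0}|\le r_{*}}|L_{t_{0}}|^{2}<\delta/4$, contradicting the hypothesis; so the hypothesis itself forces $r_{0}>r_{*}$. In that regime, combining $\int_{|y-\zeta_{0}|\le r_{0}}|L_{t_{0}}|^{2}\ge\delta/2$ with $\int_{|y|\ge R_{0}}|L_{t_{0}}|^{2}<\delta/4$ gives $\int_{\{|y-\zeta_{0}|\le r_{0}\}\cap\{|y|<R_{0}\}}|L_{t_{0}}|^{2}>\delta/4>0$, so the two balls intersect and $|\zeta_{0}|<r_{0}+R_{0}$; hence $|y-\zeta_{0}|\ge A_{\delta}r_{0}$ implies $|y|\ge A_{\delta}r_{0}-|\zeta_{0}|>(A_{\delta}-1)r_{0}-R_{0}>(A_{\delta}-1)r_{*}-R_{0}=R_{0}$, and therefore $\int_{|y-\zeta_{0}|\ge A_{\delta}r_{0}}|L_{t_{0}}|^{2}\le\int_{|y|\ge R_{0}}|L_{t_{0}}|^{2}<\delta/4\le\delta/2$, which is the claim. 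The only genuinely substantive point — and hence the ``main obstacle,'' though a mild one — is recognizing that one needs the two complementary uniform properties of the orbit (no mass escaping to spatial infinity, and no mass concentrating at a point at arbitrarily fine scales, both uniform in $t$), together with the observation that the small-radius regime is ruled out by the hypothesis itself; everything else is bookkeeping. One may equally argue by contradiction, passing to an $L^{2}$-convergent subsequence $L_{t_{n}}\to L_{\infty}$ and invoking absolute continuity of $\int|L_{\infty}|^{2}$, but the direct route above avoids extracting a limit.
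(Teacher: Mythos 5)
Your proof is correct and follows essentially the same route as the paper: both reduce to the precompact family $\{L_{t}\}$ and rest on the two uniform properties of such a family (no mass at spatial infinity, no mass at arbitrarily small scales), concluding that the hypothesis forces the radius to be bounded below, after which a large dilate of the ball exits the region carrying the mass. The paper compresses these two properties into a ``WLOG $L_{t}$ uniformly bounded with uniformly compact support'' approximation and states the reduced lemma only for origin-centered balls, whereas you extract them as tightness and uniform integrability of $\{|L_{t}|^{2}\}$ in $L^{1}$ and carry the arbitrary center $\zeta_{0}$ through explicitly via the two-ball intersection step --- a slightly more complete write-up of the same argument.
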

Lemma \eqref{aprigid} will be proven in Subsection  \ref{subaprigid}, let us assume it at the moment and finish the proof of Lemma \ref{quasuper}.
 Let us fix $\delta_{0}=\delta_{0}(w)$, and picking $\delta_{1}=\delta_{0}/2$, and let $A=A_{\delta_{1}}$ as in Lemma \ref{aprigid} such that \eqref{nondecouple} holds , and as mentioned before, we can use Corollary \ref{mr1cor} to find $T=T_{A}$ such that \eqref{decouplemass} holds and we emphasize again \eqref{finitesri} holds. 
 
 Now since $L_{t_{n}} \rightarrow F(0)$ in $L^{2}$, then  for  $\forall \epsilon>0$, there exists $n_{0}=n_{0}(\epsilon)$ such that
 \begin{equation}
 \|L_{t_{n_{0}}}-F(0)\|_{2}<\epsilon
 \end{equation}
Using  the stability argument, Proposition \ref{stability},  by choosing $\epsilon$ small enough, (according to $C_{A}$), then \eqref{nls} with initial data $L_{t_{n_{0}}}$ has a solution, we call it $v$, which exists in $[0,T_{A}]$,  such that
 \begin{equation}
 \|v(T_{A})-w(T_{A})\|_{2}\leq \delta_{0}/10.
 \end{equation}
 By \eqref{decouplemass} and triangle inequality, we have
 \begin{equation}\label{minidecouple}
 \int_{|x-x_{0}|\leq l_{0}} |v(T_{A},x)|^{2}\geq \delta_{0}/2,\quad  \int_{|x-x_{0}|\geq Al_{0}} |w(T_{A},x)|^{2}\geq \delta_{0}/2. 
 \end{equation}
 
 On the other hand, since $v(t)$ solves \eqref{nls} in $[0,T_{A}]$ with initial data $L_{t_{n_{0}}}$, and $w(t)$ solves \eqref{nls} with $w({t_{n_{0}}})=\frac{1}{\lambda^{d/2}(t_{n_{0}})}L_{t_{n_{0}}}(\frac{x-\xtn}{\ltn})e^{ix\xitn}$, by the local well posedness theory (uniqueness of the solution), we have  $w$ is defined in \\
  $[t_{n_{0}},t_{n_{0}}+\ltn^{2}T_{A}]$,  and 
 \begin{equation}
 \begin{aligned}
 &w(t_{n_{0}}+\ltn^{2}T_{A},x)\\
 =&\frac{1}{\ltn^{d/2}}v\left(T_{A}, \frac{x-\xtn-2\xitn \ltn^{2}T_{A}}{\ltn}\right)e^{-i\ltn^{2}|\xitn|^{2}T_{A}}e^{ix\xitn}\\
 =&\frac{1}{\ltn^{2}}v\left(T_{A},\frac{x-\tilde{x}_{0}}{\ltn}\right)e^{i\gamma_{0}}e^{i\xitn x}.
 \end{aligned}
 \end{equation} 
 where $\tilde{x}_{0}=\xtn+2\xitn \ltn^{2}T_{A}{\ltn}$.

Let $\tilde{t}_{n_{0}}:=t_{n}+\ltn^{2}T_{A},  z_{0}=\tilde{x_{0}}+x_{0}$, and plug in \eqref{minidecouple}, we obtain
\begin{equation}
\int_{|x-z_{0}|\leq \ltn l_{0}}|w(\tilde{t}_{n_{0}},x)|^{2}\geq \delta_{0}/2, \quad \int_{|x-z_{0}|\geq A\ltn l_{0}}w(\tilde{t}_{n_{0}},x)|^{2}\geq \delta_{0}/2
\end{equation}
This contradicts Lemma \ref{aprigid}.
To finish the proof of Lemma \ref{nofastcascade}, we are left with the proof of Lemma \ref{aprigid}, which will be done in the following subsection.
\subsection{Proof of Lemma \ref{aprigid}}\label{subaprigid}
Indeed, we only need to prove Lemma \ref{aprigid} for $w(t)=L_{t}, t\geq 0.$ Since the statement of Lemma already takes space translation into account, and the $l_{0}$ takes care of the scaling, and the phase $e^{ix\xi(t)}$ plays no role in this argument.

Thus, we reduce the proof to following Lemma 
\begin{lem}
For a precomact $L^{2}$ family $\{L_{t}\}_{t\geq 0}$, (not necessarily radial),  $\forall \delta>0$ there exists $A>0$ such that
if for some $l_{0}>0$,
\begin{equation}\label{concen}
\int_{|x|\leq l_{0}} |L_{t_{0}}|^{2}\geq  \delta,
\end{equation}
then 
\begin{equation}\label{nonconcen}
\int_{|x|\geq Al_{0}}|L_{t_{0}}|^{2}\leq \delta.
\end{equation}
\end{lem}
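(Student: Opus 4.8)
The plan is to read off from precompactness two uniform smallness statements — one at small scales, one at large scales — and then combine them by a trivial scaling count, with no contradiction argument needed.

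\emph{Step 1: two uniform bounds from precompactness.} I claim that for every $\eta>0$ there are radii $0<r_{\eta}<R_{\eta}<\infty$ with
\[
\sup_{t\ge 0}\int_{|x|\le r_{\eta}}|L_{t}|^{2}\,dx\le \eta
\qquad\text{and}\qquad
\sup_{t\ge 0}\int_{|x|\ge R_{\eta}}|L_{t}|^{2}\,dx\le \eta .
\]
The second is the familiar Fr\'echet--Kolmogorov tightness condition; the first is obtained in the same fashion. Since $\{L_{t}\}_{t\ge 0}$ is totally bounded in $L^{2}$, cover it by finitely many balls $B(f_{1},\tfrac12\sqrt{\eta}),\dots,B(f_{N},\tfrac12\sqrt{\eta})$. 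For each fixed $f_{i}\in L^{2}$ choose $\rho_{i},\sigma_{i}>0$ with $\|f_{i}\|_{L^{2}(|x|\le\rho_{i})}\le\tfrac12\sqrt{\eta}$ and $\|f_{i}\|_{L^{2}(|x|\ge\sigma_{i})}\le\tfrac12\sqrt{\eta}$, which is possible because $f_{i}\in L^{2}$. Put $r_{\eta}:=\min_{i}\rho_{i}$ and $R_{\eta}:=\max_{i}\sigma_{i}$. Given $t$, pick $i$ with $\|L_{t}-f_{i}\|_{L^{2}}\le\tfrac12\sqrt{\eta}$; the triangle inequality on $\{|x|\le r_{\eta}\}$ (resp. $\{|x|\ge R_{\eta}\}$) gives $\|L_{t}\|_{L^{2}(|x|\le r_{\eta})}\le\sqrt{\eta}$ (resp. $\|L_{t}\|_{L^{2}(|x|\ge R_{\eta})}\le\sqrt{\eta}$), which is the claim.

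\emph{Step 2: conclusion.} Given $\delta>0$, apply Step 1 with $\eta=\delta/2$ to get $0<r<R<\infty$ with $\sup_{t}\int_{|x|\le r}|L_{t}|^{2}\le\delta/2$ and $\sup_{t}\int_{|x|\ge R}|L_{t}|^{2}\le\delta/2$, and set $A:=R/r$ — note this depends only on $\delta$. Now suppose $\int_{|x|\le l_{0}}|L_{t_{0}}|^{2}\ge\delta$ for some $t_{0}\ge 0$, $l_{0}>0$. Then necessarily $l_{0}>r$, since otherwise $\int_{|x|\le l_{0}}|L_{t_{0}}|^{2}\le\int_{|x|\le r}|L_{t_{0}}|^{2}\le\delta/2<\delta$. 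Hence $A l_{0}=(R/r)l_{0}>R$, so $\int_{|x|\ge A l_{0}}|L_{t_{0}}|^{2}\le\int_{|x|\ge R}|L_{t_{0}}|^{2}\le\delta/2\le\delta$, as required.

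\emph{Main obstacle and remarks.} The only ingredient carrying any content is the small-scale bound in Step 1: since $l_{0}$ is a priori unconstrained, one must rule out a piece of mass $\ge\delta$ squeezed into an arbitrarily small ball about the origin while a further $\ge\delta$ of mass sits far out, and this is exactly where compactness of $\{L_{t}\}$ (not mere $L^{2}$-boundedness) is essential — the tail bound, by contrast, is routine. Finally, the reduction of Lemma~\ref{aprigid} to this statement is the trivial one indicated just above it: the translation by $z_{0}$ is already built into the hypothesis and conclusion of Lemma~\ref{aprigid}, the concentration scale $\lambda(t)$ is absorbed into the free parameter $l_{0}$, and the Galilean factor $e^{ix\xi(t)}$ has modulus $1$ and so is invisible to the $L^{2}$ mass integrals.
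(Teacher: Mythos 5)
Your proof is correct and follows essentially the same strategy as the paper's: use precompactness to force $l_{0}\gtrsim_{\delta}1$ and to obtain a uniform tail bound, then take $A$ large enough that $Al_{0}$ exceeds the tail radius. The only difference is in implementation --- the paper gets the lower bound on $l_{0}$ by first approximating the family by uniformly bounded, compactly supported functions, whereas you extract equi-integrability at small scales directly from a finite $\eta$-net; your version is, if anything, the more carefully written of the two.
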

\begin{proof}
Since $\{L_{t}\}$ is precompact in $L^{2}$, by standard approximation argument, one may without loss of generality may assume $L^{t}$ is uniformly bounded and their support are uniformly compact. Thus\eqref{concen} implies $1\lesssim l_{t_{0}}$, since $L_{t}$ is uniformly bounded. Thus,  when $A$ is large enough, clearly \eqref{nonconcen} holds.
\end{proof}

\section{Proof of Theorem \ref{main1}}\label{step5}
Let $\Phi_{1}$ , $w$ be as in Lemma \ref{secondextraction}.
Recall we have that $\{u(t_{n})\}$ admits profile decomposition with profiles $\{\phi_{j}, \{x_{j,n},\lambda_{j,n}, \xi_{j,n}, t_{j,n}\}_{n}\}_{j}$ . Since now we only consider radial solution, we indeed have $x_{j,n}=\xi_{j,n}=0$.

And as explained in Subsection \ref{finalapproximation}  we have
\begin{equation}
\|w\|_{2}=\|Q\|_{2}, \int N(t)^{3}=\infty
\end{equation}
Thus, by Lemma \ref{quathresh}, there exists a time sequence $s_{n}$ and parameter $\{\lambda_{n},\gamma_{n}\}_{n}$ such that
\begin{equation}\label{finalstrong}
\lim_{n\rightarrow \infty}\|\lambda_{n}^{d/2}w(s_{n}, \lambda_{n}x)-Qe^{-i\gamma_{n}}\|_{2}=0.
\end{equation}
We point out there is a slight abuse of notation in Lemma \ref{firstextraction}, Lemma \ref{secondextraction}, and  Lemma \ref{quathresh}. The time sequences $\{t_{n}\}_{n}$ in these lemmas are not necessarily same. From now on, we change the time sequence in Lemma \ref{quathresh} to $\{s_{n}\}_{n}$, and change the time sequence in Lemma \ref{secondextraction} to $\{l_{n}\}$.

Note by a extracting subsequence, we may without loss of generality assume the $\gamma_{n}$ in \eqref{finalstrong} satisfy $\gamma_{n}\equiv \gamma_{0}.$ Note phase symmetry is a compact symmetry in $L^{2}$.
 
We claim  \eqref{finalstrong} implies
\begin{lem}\label{firstapproximation}
There exists a time sequence $\{h_{n}\}$ such that  $\Phi_1(h_{n})$ admits a profile decomposition with profiles $\{v_{j}\}_{j}$. (We will not track the associated parameters here.) Moreover, there is a  profile, we call it $v_{1}$ such that $v_{1}=Qe^{-i\gamma_{0}}$
\end{lem}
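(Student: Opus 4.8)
\textbf{Proof proposal for Lemma \ref{firstapproximation}.}

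The plan is to transfer the strong convergence \eqref{finalstrong} for $w$ back up the chain of reductions: first from $w$ to $\Phi_1$, and then observe that the resulting convergence statement for $\Phi_1$ is exactly a profile decomposition of the desired form. Recall that by Lemma \ref{secondextraction} we have $\Phi_1(t,x)=\frac{1}{\lambda_1^{d/2}(t)}P_t(\frac{x-x_1(t)}{\lambda_1(t)})e^{ix\xi_1(t)}$ (and in the radial setting $x_1(t)=\xi_1(t)\equiv 0$), and there is a sequence $l_n\to T^+(\Phi_1)$ along which $P_{l_n}\to P_0$ in $L^2$, where $w$ is the solution with data $P_0$ (or $\bar P_0$). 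The first step is therefore to pull \eqref{finalstrong} back to a statement about $P_{l_n}$: since $w$ and its data $P_0$ differ only by applying an element of the symmetry group $G$ (Schr\"odinger flow by time $s_n$, scaling by $\lambda_n$, phase $\gamma_0$), and since $G$ acts unitarily on $L^2$ (Remark \ref{unitary}), the convergence $\lambda_n^{d/2}w(s_n,\lambda_n x)e^{i\gamma_0}\to Q$ is equivalent to the statement that $P_0$, transported by the appropriate group element $g_n\in G$, converges strongly to $Qe^{-i\gamma_0}$ in $L^2$. Composing this with $P_{l_n}\to P_0$ (again using unitarity of $G$), we get $g_n' \cdot P_{l_n}\to Qe^{-i\gamma_0}$ in $L^2$ for a suitable sequence $g_n'\in G$.

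The second step is to reinterpret this in terms of $\Phi_1$. By the formula for $\Phi_1$ in \eqref{almostsecond}, $P_{l_n}$ is, up to the fixed group element $g_{x_1(l_n),\xi_1(l_n),\lambda_1(l_n),0}\in G$, nothing but $\Phi_1(l_n)$ (there is a slight notational point that one must be careful whether it is $P_{l_n}$ or $\bar P_{l_n}$ that enters, which only affects whether one works with $w$ or with the conjugated solution, and hence at worst replaces $Q$ by $\bar Q=Q$ since $Q$ is real). Hence there is a group element $\tilde g_n\in G$ with $\tilde g_n\cdot \Phi_1(l_n)\to Qe^{-i\gamma_0}$ strongly in $L^2$. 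Setting $h_n:=l_n$ and letting $v_1:=Qe^{-i\gamma_0}$ with associated parameters the inverses of the parameters of $\tilde g_n$, this says precisely that $\Phi_1(h_n)$ has a profile decomposition in which one profile is $Qe^{-i\gamma_0}$: indeed, write $\Phi_1(h_n)=\tilde g_n^{-1}\cdot v_1 + \omega_n$ where $\omega_n:=\Phi_1(h_n)-\tilde g_n^{-1}\cdot v_1\to 0$ strongly, and then apply Proposition \ref{profiledecomposition} to the bounded sequence $\{\tilde g_n\cdot\omega_n\}_n$ (which tends to $0$ in $L^2$, hence all of whose profiles vanish, or which simply contributes a trivial decomposition) to collect the remaining profiles $\{v_j\}_{j\geq 2}$; since $\tilde g_n\cdot\omega_n\to 0$, the remainder term has vanishing Strichartz norm and there are effectively no further profiles, but in any case one gets a legitimate profile decomposition of $\Phi_1(h_n)$ containing $v_1=Qe^{-i\gamma_0}$ as a profile. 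Extracting a subsequence as needed to make all limits of parameters exist is harmless.

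The main obstacle, such as it is, is purely bookkeeping: one must carefully chase the composition of symmetry-group elements through the two reduction lemmas (the conjugation by $G$ relating $w$ to $\Phi_1$, and the Schr\"odinger-flow/scaling/phase element built from $s_n,\lambda_n,\gamma_0$), and make sure that the resulting sequence of parameters is admissible and that the strong $L^2$ convergence is genuinely preserved — which it is, by the unitarity of $G$ on $L^2$. There is no analytic difficulty here: strong convergence trivially yields a (degenerate) profile decomposition, and the only content is that the limit of the transported $\Phi_1(h_n)$ is exactly $Qe^{-i\gamma_0}$ rather than some other profile. I would also remark, as the paper does, that one should fix the phase $\gamma_n\equiv\gamma_0$ in advance by a subsequence extraction, since the phase symmetry is compact in $L^2$, so no phase parameter needs to be carried in the profile.
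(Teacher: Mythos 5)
There is a genuine gap, and it occurs at the very first step. You assert that $w(s_n)$ and its initial data $P_0$ ``differ only by applying an element of the symmetry group $G$ (Schr\"odinger flow by time $s_n$, scaling by $\lambda_n$, phase $\gamma_0$)''. But the group $G$ of Definition \ref{group} contains only the \emph{linear} propagator $e^{i(-t_0/\lambda_0^2)\Delta}$, whereas $w(s_n)$ is the \emph{nonlinear} evolution of $P_0$; for a non-scattering almost periodic solution these are not the same, and $w(s_n)$ is not $g_n\cdot P_0$ for any $g_n\in G$. Consequently the strong convergence \eqref{finalstrong}, which lives at the nonlinear times $s_n$, cannot be ``pulled back'' to a statement that $P_0$ (and hence $\Phi_1(l_n)$) converges to $Qe^{-i\gamma_0}$ modulo $G$. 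With your choice $h_n:=l_n$, what is true is that $\Phi_1(l_n)$ has the single profile $P_0$ (since $P_{l_n}\to P_0$ in $L^2$), and $P_0$ is not known to equal $Q$ up to symmetry — indeed, identifying the times at which the solution resembles $Q$ is the entire content of the lemma, so the proposal proves a statement about the wrong times.

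The repair is exactly what the paper does: one must propagate forward along the \emph{nonlinear} flow. For each fixed $p$, the stability argument (Proposition \ref{stability} or Proposition \ref{nonlinearprofile}) upgrades $P_{l_n}\to P_0$ to the statement that $\Phi_1\bigl(l_n+\lambda_1(l_n)^2 s_p\bigr)$ admits, as $n\to\infty$, a profile decomposition whose single profile is $w(s_p)$; separately, \eqref{finalstrong} says $\{w(s_p)\}_p$ has the single profile $Qe^{-i\gamma_0}$. These two limits are then combined by the double profile decomposition, Lemma \ref{double}, producing a diagonal sequence $n_p$ and the admissible times $h_p=l_{n_p}+\lambda_1(l_{n_p})^2 s_p$ at which $Qe^{-i\gamma_0}$ appears as a profile of $\Phi_1(h_p)$. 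Your closing remarks (that strong convergence yields a degenerate profile decomposition, and that $\gamma_n$ may be frozen to $\gamma_0$ by compactness of the phase symmetry) are fine, but they only become relevant after this nonlinear transfer and diagonalization, which is the actual content of the proof and is missing from the proposal.
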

\begin{rem}
Since $\|\Phi_{1}\|_{2}=\|w\|_{2}=\|Q\|_{2}$, this indeed implies $v_{1}$ is the only profile and one can conclude similar strong convergence results as in \eqref{finalstrong}.
\end{rem}
\begin{proof}[Proof of Lemma \ref{firstapproximation}]
Recall $P_{l_{n}}, P_{0}, w, \Phi$ in Lemma \ref{secondextraction}. Recall we have already changed the notation $t_{n}$ to $l_{n}$ in Lemma \ref{secondextraction}.
Without loss of generality, we assume in Lemma \ref{secondextraction}, the initial data of $w$ is $P_{0}$.
Indeed, since $P_{l_n}\rightarrow P_{0}$, this implies $\Phi(l_{n})$ admits a profile decomposition with only one proifle $P_{0}$, and with stability argument, Proposition \ref{stability} or Proposition \ref{nonlinearprofile},  it further implies $\{\Phi(l_{n}+\lambda_{1}(l_{n}^{2})s_{p})\}$ admits profile decompostition $w(s_p)$ for each $p$, also note $\{w(s_p)\}$ admits a profile decomposition with only one profile $Q^{e^{i\gamma_{0}}}$ by \eqref{finalstrong}, thus by Lemma \ref{double}, there exists a sequence $\{n_{p}\}_{p}$, such that $\Phi_{1}(l_{n_{p}}+\lambda(l_{n_{p}})^{2}s_{p})$ admits a profile decomposition, one of the profile is $Q{e^{i\gamma}}$. 
\end{proof}

To finish the proof of Theorem \ref{main1}, let $h_{p}$ be as in Lemma \ref{firstapproximation}, one simply argues as in the proof of Lemma \ref{firstapproximation} we just did, and use Lemma \ref{double} to  do double profile decompositon for $\{u(t_{n}+\lambda_{1,n}^{2}h_{p})\}_{n,p}$ .

\section{Acknowledgment}
The author is very grateful to his adviser, Gigliola Staffilani for consistent support and encouragement and very careful reading of the material, and a lot of helpful discussion. Part  of  the material is based upon work supported by the National Science Foundation under Grant No. 0932078000 while the author was in residence at the Mathematical Sciences Research Institute in Berkeley, California, during Fall 2015 semester. The author benefits a lot in his stay in MSRI. Part of the material is finished when the author was attending the Nonlinear Evolution Problem workshop in MFO, Oberwolfach Research Institute for Mathematics, which is a good place to do research.
\appendix
\section{Proof of Theorem \ref{aux}}\label{sectionaux}
As shown in the proof of Lemma \ref{firstextraction}, such solution $u$ is of form
\begin{equation}
u(t,x)=\frac{1}{\lambda^{d/2}(t)}V_{t}(\frac{x-x(t)}{\lambda(t)})e^{ix\xi(t)}.
\end{equation}
$\{V_{t}\}$ is a compact $L^{2}$ family and$ \|V_{t}\|_{2}=\|Q\|_{2}$.  We assume $u$ blows up in finite time $T$. 
To prove Theorem \ref{aux}, we need only exclude the possibility
\begin{equation}\label{nono}
\lambda(t)\gtrsim  (T-t)^{2/3-}.
\end{equation}

Assume \eqref{nono} holds, do a (inverse) pesudo conformal transformation of $u$, we will get a solution $v$ such that
\begin{equation}
v(\tau,y)=\frac{1}{\tau^{d/2}}u(T-\frac{1}{\tau},\frac{y}{\tau})e^{i\frac{1}{4}\frac{|y|^{2}}{\tau}}= N^{d/2}(\tau)W_{\tau}(N(\tau)y+y(\tau))e^{i\tilde{\xi}(\tau)y}
\end{equation}
Where 
\begin{equation}
\begin{aligned}
&N(\tau)=\frac{1}{\tau}\frac{1}{\lambda(T-\frac{1}{\tau})},\\
&W_{\tau}(z)=V_{T-\frac{1}{\tau}}(z)e^{i\frac{1}{4}\frac{1}{N^{2}(\tau)\tau}|z|^{2}}e^{-i\left(x(T-\frac{1}{\tau})\right)^{2}\tau},\\
&y(\tau)=\tau x(T-\frac{1}{\tau}), \tilde{\xi}(\tau)=\frac{\xi(T-\frac{1}{\tau})}{\tau}+2x(T-\frac{1}{\tau})
\end{aligned}
\end{equation}
Note the exact value of $y(\tau), \tilde{\xi}(\tau)$ actually do not matter.

We obtain by \eqref{nono} 
\begin{equation}
N(\tau)\leq \frac{1}{\tau}\lesssim \frac{1}{\tau}\tau^{2/3-}\lesssim (\frac{1}{\tau})^{1/3+}
\end{equation}
On the other hand, one has the scaling lower bound
\begin{equation}
\lambda(t)\lesssim (T-t)^{1/2}
\end{equation}
Thus,$ N(\tau)\geq \tau^{-1/2}$,

This further implies $W_{\tau}$ is also precompact in $L^{2}$, since multiplying  $e^{i\frac{1}{4}\frac{1}{N^{2}(\tau)\tau}|z|^{2}}$  is a compact perturbation.  This contradicts Lemma \ref{nofastcascade}, since now we have $\int_{0}^{\infty} N^{3}(\tau)<\infty$.

\bibliographystyle{alpha}
\bibliography{BG}

\newcommand{\etalchar}[1]{$^{#1}$}
\begin{thebibliography}{CRSW04}

\bibitem[BG99]{bahouri1999high}
Hajer Bahouri and Patrick G{\'e}rard.
\newblock High frequency approximation of solutions to critical nonlinear wave
  equations.
\newblock {\em American Journal of Mathematics}, 121(1):131--175, 1999.

\bibitem[Bou98]{bourgain1998refinements}
Jean Bourgain.
\newblock Refinements of strichartz inequality and applications to 2{D}-{NLS}
  with critical nonlinearity.
\newblock {\em International Mathematics Research Notices}, 1998(5):253--283,
  1998.

\bibitem[BV07]{begout2007mass}
Pascal B{\'e}gout and Ana Vargas.
\newblock Mass concentration phenomena for the {$L^2$}-critical nonlinear
  schr{\"o}dinger equation.
\newblock {\em Transactions of the American Mathematical Society},
  359(11):5257--5282, 2007.

\bibitem[Caz03]{cazenave2003semilinear}
Thierry Cazenave.
\newblock {\em Semilinear schr{\"o}dinger equations}, volume~10.
\newblock American Mathematical Soc., 2003.

\bibitem[CKLS14]{cote2014profiles}
Raphael Cote, Carlos~E Kenig, Andrew Lawrie, and Wilhelm Schlag.
\newblock Profiles for the radial focusing 4d energy-critical wave equation.
\newblock {\em arXiv preprint arXiv:1402.2307}, 2014.

\bibitem[CKS{\etalchar{+}}08]{colliander2008global}
James Colliander, Markus Keel, Gigliola Staffilani, Hideo Takaoka, and Terence
  Tao.
\newblock Global well-posedness and scattering for the energy-critical
  nonlinear schrodinger equation in $r^3$.
\newblock {\em Annals of Mathematics}, pages 767--865, 2008.

\bibitem[CRSW04]{colliander2004ground}
Jim Colliander, Sarah Raynor, Catherine Sulem, and J~Douglas Wright.
\newblock Ground state mass concentration in the {$L^{2}$}-critical nonlinear
  schrodinger equation below {$H^{1}$}.
\newblock {\em arXiv preprint math/0409585}, 2004.

\bibitem[CW88]{cazenave1988cauchy}
Thierry Cazenave and Fred~B Weissler.
\newblock The cauchy problem for the nonlinear schr{\"o}dinger equation in
  {$H^{1}$}.
\newblock {\em manuscripta mathematica}, 61(4):477--494, 1988.

\bibitem[CW89]{cazenave1989some}
Thierry Cazenave and Fred~B Weissler.
\newblock Some remarks on the nonlinear schr{\"o}dinger equation in the
  critical case.
\newblock In {\em Nonlinear semigroups, partial differential equations and
  attractors}, pages 18--29. Springer, 1989.

\bibitem[CW90]{cazenave1990cauchy}
Thierry Cazenave and Fred~B Weissler.
\newblock The cauchy problem for the critical nonlinear schr{\"o}dinger
  equation in hs.
\newblock {\em Nonlinear Analysis: Theory, Methods \& Applications},
  14(10):807--836, 1990.

\bibitem[DJKM16]{duyckaerts2016soliton}
Thomas Duyckaerts, Hao Jia, Carlos Kenig, and Frank Merle.
\newblock Soliton resolution along a sequence of times for the focusing energy
  critical wave equation.
\newblock {\em arXiv preprint arXiv:1601.01871}, 2016.

\bibitem[DKM11]{duyckaerts2009universality}
Thomas Duyckaerts, Carlos~E Kenig, and Frank Merle.
\newblock Universality of blow-up profile for small radial type ii blow-up
  solutions of the energy-critical wave equation.
\newblock {\em Journal of the European Mathematical Society}, 13(3):533--599,
  2011.

\bibitem[DKM12a]{dkmprofile}
Thomas Duyckaerts, Carlos Kenig, and Frank Merle.
\newblock Profiles of bounded radial solutions of the focusing, energy-critical
  wave equation.
\newblock {\em Geometric and Functional Analysis}, 22(3):639--698, 2012.

\bibitem[DKM12b]{dkm2}
Thomas Duyckaerts, Carlos~E Kenig, and Frank Merle.
\newblock Universality of the blow-up profile for small type ii blow-up
  solutions of the energy-critical wave equation: the nonradial case.
\newblock {\em Journal of the European Mathematical Society}, 14(5):1389--1454,
  2012.

\bibitem[DKM13]{dkm3}
Thomas Duyckaerts, Carlos Kenig, and Frank Merle.
\newblock Classification of radial solutions of the focusing, energy-critical
  wave equation.
\newblock {\em Cambridge Journal of Mathematics}, 1(1):75--144, 2013.

\bibitem[DKM15]{dkmnew}
Thomas Duyckaerts, Carlos Kenig, and Frank Merle.
\newblock Profiles for bounded solutions of dispersive equations, with
  applications to energy-critical wave and schr{\"o}dinger equations.
\newblock {\em Communications on Pure \& Applied Analysis}, 14(4), 2015.

\bibitem[DKM16a]{duyckaerts2016concentration}
Thomas Duyckaerts, Carlos Kenig, and Frank Merle.
\newblock Concentration-compactness and universal profiles for the non-radial
  energy critical wave equation.
\newblock {\em Nonlinear Analysis: Theory, Methods \& Applications},
  138:44--82, 2016.

\bibitem[DKM16b]{duyckaerts2016scattering}
Thomas Duyckaerts, Carlos Kenig, and Frank Merle.
\newblock Scattering profile for global solutions of the energy-critical wave
  equation.
\newblock {\em arXiv preprint arXiv:1601.02107}, 2016.

\bibitem[DKM16c]{dkmcompact}
Thomas Duyckaerts, Carlos~E Kenig, and Frank Merle.
\newblock Solutions of the focusing nonradial critical wave equation with the
  compactness property.
\newblock {\em Annali della Scuola Normale Superiore di Pisa. Classe di
  scienze}, 15(1):731--808, 2016.

\bibitem[Dod]{dodson2467global}
Ben Dodson.
\newblock Global well-posedness and scattering for the defocusing,
  {$l^{2}$}-critical, nonlinear schr{\"o}dinger equation when {$d= 2$},
  preprint, 2010.
\newblock {\em arXiv preprint arXiv:0912.2467}.

\bibitem[Dod12]{dodson2012global}
Benjamin Dodson.
\newblock Global well-posedness and scattering for the defocusing, -critical
  nonlinear schr{\"o}dinger equation when {$d\geq 3$}.
\newblock {\em Journal of the American Mathematical Society}, 25(2):429--463,
  2012.

\bibitem[Dod15]{dodson2015global}
Benjamin Dodson.
\newblock Global well-posedness and scattering for the mass critical nonlinear
  schr{\"o}dinger equation with mass below the mass of the ground state.
\newblock {\em Advances in Mathematics}, 285:1589--1618, 2015.

\bibitem[Dod16]{dodson2016global}
Benjamin Dodson.
\newblock Global well-posedness and scattering for the defocusing,
  {$L^2$}-critical, nonlinear schr{\"o}dinger equation when d= 1.
\newblock {\em American Journal of Mathematics}, 138(2):531--569, 2016.

\bibitem[Gla77]{glassey1977blowing}
Robert~T Glassey.
\newblock On the blowing up of solutions to the cauchy problem for nonlinear
  schr{\"o}dinger equations.
\newblock {\em Journal of Mathematical Physics}, 18:1794--1797, 1977.

\bibitem[HK05]{hmidi2005blowup}
Taoufik Hmidi and Sahbi Keraani.
\newblock Blowup theory for the critical nonlinear schr{\"o}dinger equations
  revisited.
\newblock {\em International Mathematics Research Notices},
  2005(46):2815--2828, 2005.

\bibitem[Ker01]{keraani2001defect}
Sahbi Keraani.
\newblock On the defect of compactness for the strichartz estimates of the
  schr{\"o}dinger equations.
\newblock {\em Journal of Differential equations}, 175(2):353--392, 2001.

\bibitem[Ker06]{keraani2006blow}
Sahbi Keraani.
\newblock On the blow up phenomenon of the critical nonlinear schr{\"o}dinger
  equation.
\newblock {\em Journal of Functional Analysis}, 235(1):171--192, 2006.

\bibitem[KM06]{kenig2006global}
Carlos~E Kenig and Frank Merle.
\newblock Global well-posedness, scattering and blow-up for the
  energy-critical, focusing, non-linear schr{\"o}dinger equation in the radial
  case.
\newblock {\em Inventiones mathematicae}, 166(3):645--675, 2006.

\bibitem[KM08]{kenig2008global}
Carlos~E Kenig and Frank Merle.
\newblock Global well-posedness, scattering and blow-up for the energy-critical
  focusing non-linear wave equation.
\newblock {\em Acta Mathematica}, 201(2):147--212, 2008.

\bibitem[KT98]{keel1998endpoint}
Markus Keel and Terence Tao.
\newblock Endpoint strichartz estimates.
\newblock {\em American Journal of Mathematics}, pages 955--980, 1998.

\bibitem[LPSS88]{landman1988rate}
MJ~Landman, GC~Papanicolaou, C~Sulem, and PL~Sulem.
\newblock Rate of blowup for solutions of the nonlinear schr{\"o}dinger
  equation at critical dimension.
\newblock {\em Physical Review A}, 38(8):3837, 1988.

\bibitem[LZ10]{li2010regularity}
Dong Li and Xiaoyi Zhang.
\newblock Regularity of almost periodic modulo scaling solutions for
  mass-critical nls and applications.
\newblock {\em Analysis \& PDE}, 3(2):175--195, 2010.

\bibitem[LZ12]{li2012rigidity}
Dong Li and Xiaoyi Zhang.
\newblock On the rigidity of solitary waves for the focusing mass-critical nls
  in dimensions {$d\geq 2$}.
\newblock {\em Science China Mathematics}, 55(2):385--434, 2012.

\bibitem[M{\etalchar{+}}93]{merle1993determination}
Frank Merle et~al.
\newblock Determination of blow-up solutions with minimal mass for nonlinear
  schr{\"o}dinger equations with critical power.
\newblock {\em Duke Mathematical Journal}, 69(2):427--454, 1993.

\bibitem[MR03]{merle2003sharp}
Frank Merle and Pierre Raphael.
\newblock Sharp upper bound on the blow-up rate for the critical nonlinear
  schr{\"o}dinger equation.
\newblock {\em Geometric \& Functional Analysis GAFA}, 13(3):591--642, 2003.

\bibitem[MR04]{merle2004universality}
Frank Merle and Pierre Raphael.
\newblock On universality of blow-up profile for {$L^{2}$} critical nonlinear
  schr{\"o}dinger equation.
\newblock {\em Inventiones mathematicae}, 156(3):565--672, 2004.

\bibitem[MR05a]{merle2005profiles}
Frank Merle and Pierre Raphael.
\newblock Profiles and quantization of the blow up mass for critical nonlinear
  schr{\"o}dinger equation.
\newblock {\em Communications in mathematical physics}, 253(3):675--704, 2005.

\bibitem[MR{\etalchar{+}}05b]{merle2005blow}
Frank Merle, Pierre Raphael, et~al.
\newblock The blow-up dynamic and upper bound on the blow-up rate for critical
  nonlinear schrodinger equation.
\newblock {\em Annals of mathematics}, 161(1):157, 2005.

\bibitem[MR06]{merle2006sharp}
Frank Merle and Pierre Raphael.
\newblock On a sharp lower bound on the blow-up rate for the {$L^2$} critical
  nonlinear schr{\"o}dinger equation.
\newblock {\em Journal of the American Mathematical Society}, 19(1):37--90,
  2006.

\bibitem[MT90]{merle19902}
Frank Merle and Yoshio Tsutsumi.
\newblock {$L^2$} concentration of blow-up solutions for the nonlinear
  schr{\"o}dinger equation with critical power nonlinearity.
\newblock {\em Journal of differential equations}, 84(2):205--214, 1990.

\bibitem[MV98]{merle1998compactness}
Frank Merle and Luis Vega.
\newblock Compactness at blow-up time for {$L^{2}$} solutions of the critical
  nonlinear schr{\"o}dinger equation in 2{D}.
\newblock {\em International Mathematics Research Notices}, 1998(8):399--425,
  1998.

\bibitem[Naw90]{nawa1990mass}
Hayato Nawa.
\newblock Mass concentration phenomenon for the nonlinear schr{\"o}dinger
  equation with the critical power nonlinearity ii.
\newblock {\em Kodai Mathematical Journal}, 13(3):333--348, 1990.

\bibitem[Per01]{perelman2001blow}
Galina Perelman.
\newblock On the blow up phenomenon for the critical nonlinear schr{\"o}dinger
  equation in 1d.
\newblock {\em Nonlinear dynamics and renormalization group (Montreal, QC,
  1999)}, 27:147--164, 2001.

\bibitem[Rap05]{raphael2005stability}
Pierre Raphael.
\newblock Stability of the log-log bound for blow up solutions to the critical
  non linear schr{\"o}dinger equation.
\newblock {\em Mathematische Annalen}, 331(3):577--609, 2005.

\bibitem[Tao06]{tao2006nonlinear}
Terence Tao.
\newblock {\em Nonlinear dispersive equations: local and global analysis},
  volume 106.
\newblock American Mathematical Soc., 2006.

\bibitem[TVZ08]{tao2008minimal}
Terence Tao, Monica Visan, and Xiaoyi Zhang.
\newblock Minimal-mass blowup solutions of the mass-critical nls.
\newblock In {\em Forum Mathematicum}, volume~20, pages 881--919, 2008.

\bibitem[Tzi06]{tzirakis2006mass}
Nikolaos Tzirakis.
\newblock Mass concentration phenomenon for the quintic nonlinear
  schr{\"o}dinger equation in one dimension.
\newblock {\em SIAM journal on mathematical analysis}, 37(6):1923--1946, 2006.

\bibitem[VZ07]{visan2007blowup}
Monica Visan and Xiaoyi Zhang.
\newblock On the blowup for the {$L^{2}$}-critical focusing nonlinear
  schr{\"o}dinger equation in higher dimensions below the energy class.
\newblock {\em SIAM Journal on Mathematical Analysis}, 39(1):34--56, 2007.

\bibitem[Wei83]{weinstein1983nonlinear}
Michael~I Weinstein.
\newblock Nonlinear schr{\"o}dinger equations and sharp interpolation
  estimates.
\newblock {\em Communications in Mathematical Physics}, 87(4):567--576, 1983.

\end{thebibliography}
\end{document}